\NewDocumentCommand{\ceil}{s O{} m}{%
  \IfBooleanTF{#1} 
    {\left\lceil#3\right\rceil} 
    {#2\lceil#3#2\rceil} 
}
\newtheorem{theorem}{Theorem}[section]
\newtheorem{lemma}[theorem]{Lemma}
\newtheorem{corollary}[theorem]{Corollary}
\newtheorem{proposition}[theorem]{Proposition}
\theoremstyle{remark}
\newtheorem{remark}[theorem]{\bf{Remark}}
\theoremstyle{definition}
\newtheorem{assumption}[theorem]{Assumption}
\newtheorem{example}[theorem]{Example}
\newtheorem{definition}[theorem]{Definition}
\newcommand\cbrk{\text{$]$\kern-.15em$]$}}
\newcommand\opar{\text{\,\raise.2ex\hbox{${\scriptstyle
|}$}\kern-.34em$($}}
\newcommand\cpar{\text{$)$\kern-.34em\raise.2ex\hbox{${\scriptstyle |}$}}\,}
\newcommand{\aint}{-\hspace{-0.38cm}\int}
\newcommand{\aaint}{-\hspace{-0.31cm}\int}
\newcommand\bE{\mathbb{E}}
\newcommand\bH{\mathbb{H}}
\newcommand\bL{\mathbb{L}}
\newcommand\bN{\mathbb{N}}
\newcommand\bP{\mathbb{P}}
\newcommand\bQ{\mathbb{Q}}
\newcommand\bR{\mathbb{R}}
\newcommand\cF{\mathcal{F}}
\newcommand\cM{\mathcal{M}}
\newcommand\cO{\mathcal{O}}
\newcommand\cS{\mathcal{S}}
\newcommand\nn{\nonumber}
\newcommand{\mysection}[1]{\section{#1}
\setcounter{equation}{0}}
\newcommand{\Ccinf}{C_{c}^{\infty}}
\newcommand{\R}{\mathbb{R}}
\begin{document}

\title[PDE with nonlocal operators having slowly varying kernels]
{An $L_{q}(L_{p})$-regularity theory for parabolic equations with integro-differential operators having low intensity kernels}

\author{Jaehoon Kang}
\address{Hankyong National University, 327 Jungang-ro, Anseong-si, Gyeonggi-do 17579, Republic of Korea}
\email{jaehoon.kang@hknu.ac.kr}

\author{Daehan Park}
\address{Kangwon National University, 1 Kangwondaehakgil, Chucheon-Si, Gangwon State, 24341, Republic of Korea.} 
\email{daehanpark@kangwon.ac.kr}

\thanks{The second author was supported by the 2024 Research Grant from Kangwon Nationa University (grant no. 202404630001).}

\subjclass[2020]{35B65, 47G20, 60G51, 60J35}

\keywords{Parabolic equation, integro-differential operator, Maximal $L_q(L_p)$-regularity theory, Slowly varying kernel,  Heat kernel estimation}

\begin{abstract}
In this article, we present the existence, uniqueness, and  regularity of solutions to parabolic equations with non-local operators
$$
\partial_{t}u(t,x) = \mathcal{L}^{a}u(t,x) + f(t,x), \quad t>0
$$
in $L_{q}(L_{p})$ spaces. Our spatial operator $\mathcal{L}^{a}$ is an integro-differential operator of the form
$$
\int_{\mathbb{R}^{d}} \left( u(x+y)-u(x) -\nabla u(x) \cdot y \mathrm{1}_{|y|\leq 1}  \right) a(t,y) j_{d}(|y|)dy.
$$
Here, $a(t,y)$ is a merely bounded measurable coefficient, and we employed the theory of additive process to handle it. We investigate conditions on $j_{d}(r)$ which yield $L_{q}(L_{p})$-regularity of solutions. Our assumptions on $j_d$ are general so that $j_d(r)$ may be comparable to $r^{-d}\ell(r^{-1})$ for a function $\ell$ which is slowly varying at infinity. For example, we can take $\ell(r)=\log{(1+r^{\alpha})}$ or $\ell(r) = \min{\{r^{\alpha},1\}}$ ($\alpha\in(0,2)$). Indeed, our result covers the operators whose Fourier multiplier $\psi(\xi)$ does not have any scaling condition for $|\xi|\geq 1$. Furthermore, we give some examples of operators, which cannot be covered by previous results where smoothness or scaling conditions on $\psi$ are considered.
\end{abstract}

\maketitle

\tableofcontents

\mysection{Introduction}
Parabolic equations are among the most fundamental partial differential equations, which play a central role in the mathematical description of natural or artificial phenomena. Especially, parabolic equations are related to the diffusion of particles in various situations. There is another mathematical approach to the description of diffusion based on the theory of stochastic processes. If we let $B_{t}$ be a Brownian motion, then it turns out that its transition density is the fundamental solution to the heat equation, and the Laplacian $\Delta$ is the infinitesimal generator of $B_{t}$;
$$
\Delta u(x) = \lim_{t\downarrow 0} \frac{\mathbb{E}u(x+B_{t})-u(x)}{t}.
$$

The above observation still holds if we change $B_{t}$ to L\'evy processes. Therefore, the analysis of the heat equations can be extended to that of parabolic equations with the infinitesimal generator of L\'evy processes. For example, if we change $B_{t}$ to symmetric $\alpha$-stable ($\alpha\in(0,2)$) process, then the corresponding equation is $\partial_{t}u = -(-\Delta)^{\alpha/2}u$, and $-(-\Delta)^{\alpha/2} = \Delta^{\alpha/2}$ (which we call it a fractional Laplacian of order $\alpha$) is the infinitesimal generator of symmetric $\alpha$-stable process. It is well-known that $\Delta^{\alpha/2}$ is a non-local operator with the Fourier multiplier $-|\xi|^{\alpha}$. In general, the infinitesimal generators of L\'evy processes are non-local operators since L\'evy processes can move by jumps (see e.g. \cite{sato1999}).

In this article, we study $L_{p}$-regularity theory of parabolic equations with non-local operators. The basic form of our spatial  non-local operator is
\begin{equation}\label{eqn 07.25.17:38}
\mathcal{L}u(x) = \int_{\mathbb{R}^{d}} \left( u(x+y)-u(x) -\nabla u(x) \cdot y \mathrm{1}_{|y|\leq 1}  \right) j_{d}(|y|)dy,
\end{equation}
and we also consider more general operators by including bounded measurable coefficients (see \eqref{eqn 03.15.14:09} for detail). We investigate conditions on $j_d$ which yield $L_{p}$-regularity of solutions. The assumptions are quite general, and they cover not only the fractional Laplacian whose (jumping) kernel is given as $j_{d}(r) = c_{d,\alpha} \, r^{-d-\alpha}$ but also the kernel $j_{d}$ which has low intensity near zero (for example, the kernel $j_{d}(r)$ which is comparable to $r^{-d}$ near zero). In this case, $j_{d}$ has no explicit scaling condition near zero.

The study of non-local operators having low intensity near zero has received growing interest recently. In \cite{CoPa18} the authors proved Hardy inequality and compact embedding related to non-local operators having low intensity near zero. In \cite{KassM17}, a H\"older estimation of solutions to equations with such types of operators are studied.  There are other types of non-local operators, whose characteristic functions are slowly varying function at infinity (i.e. $\psi(\lambda r)/\psi(r) \to 1$ as $r\to \infty$ for any $\lambda>0$). For example, the operator $L_{\Delta}:= \frac{d}{d\alpha}\big|_{\alpha=0}[(-\Delta)^{\alpha/2}]$ does not have the representation like \eqref{eqn 07.25.17:38} even though its Fourier multiplier is $2\log{|\xi|}$ (see \cite{CW19}). There are results for the theory of analysis and partial differential equations related to it in e.g. \cite{CW19,JSW20,LW21,ZN21}. We refer to \cite{DGV23} which addresses some applications of non-local operators having low intensity kernels (see Remark 2.8 and Remark 2.9 therein). We also refer to very recent results \cite{CK24log,HZ23nonlocal} considering non-local operators with low intensity kernels.

Let us review some previous results in the literature on $L_{p}$-theory. To the best of the authors' knowledge, $L_{p}$-theory of parabolic equations with non-local operators was first established in \cite{mik1992}. The operators therein are of the form 
\begin{equation*}
\mathscr{L}^{b}u(x)=\int_{\bR^{d}} \left( u(x+y) - u(x) -(\nabla u(x) \cdot y)\chi_{\alpha}(y) \right)  \frac{b(x,y)}{|y|^{d+\alpha}}dy,
\end{equation*}
where $\chi_{\alpha}(y) = \mathbf{1}_{\alpha=1}\mathbf{1}_{|y|\leq 1} + \mathbf{1}_{\alpha\in(1,2)}$, and $b(x,y)$ is uniformly continuous in $x$, differentiable in $y$ up to $\lfloor d/2 \rfloor +1$. There are results in the study of  $L_{p}$-theory for elliptic and parabolic equations which relax the condition on $b$ in e.g. \cite{DJK23nonlocal,DK12elliptic,MP14cauchy}. Note that $\mathscr{L}^{b}$ can be considered as fractional Laplacian with coefficients. Thus the above results correspond to the theory of parabolic equations with variable coefficients
$$
\partial_{t}u(t,x) = \sum_{i,j=1}^{d}a^{ij}(t,x)u_{x^{i}x^{j}}(t,x) + f(t,x),
$$
which has been studied for decades.

On the other hand, there are results, e.g. \cite{kim2013parabolic,mikulevivcius2019cauchy,mikulevivcius2017p,zhang2013p}, which focus on generalization of principal part  $|y|^{-d-\alpha}$  for $L_{p}$-theory of parabolic equations. One of the examples, which can be handled by the above results is $\mathcal{L} = -\phi(-\Delta)$, where $\phi$ is a Bernstein function with the following (weak) lower and upper scaling condition
\begin{equation}\label{eqn 07.25.17:28}
C_{1}\left( \frac{R}{r} \right)^{\delta} \leq \frac{\phi(R)}{\phi(r)} \leq C_{2} \left( \frac{R}{r} \right)^{\delta '} \quad \forall\, 0<r<R<\infty, \quad 0<\delta \leq \delta '<1.
\end{equation}
We also refer to very recent result \cite{JSS23} which adopts settings in \cite{mikulevivcius2019cauchy,mikulevivcius2017p}. For $L_{p}$-theory of parabolic equations with pseudo differential operators having smooth symbols, we refer e.g. \cite{CK23pseudo,kim2019L,KKL16}. 

For equations with space-time non-local operators, we refer e.g. \cite{clement1992global,P13,zacher2005maximal,Z12}, which deal with parabolic Volterra equations of the type
$$
u(t) + \int_{0}^{t} a(t-s) Au(s) ds = f(t),
$$
where $a$ is locally integrable function and $A$ is densely defined closed operator on $L_{p}$. See also e.g. \cite{KW04} for parabolic equations with a similar approach. For $L_{q}(L_{p})$-theory of such equations with $a(t)=t^{\alpha-1}/\Gamma(\alpha)$, and an integro-differential operator $A$ having low intensity kernel, see \cite{KP21}.

Our main result is parabolic correspondence of \cite{KP21} (i.e. $a(t,y) \equiv 1$) and we will consider $j_{d}$ satisfying the following condition;
There is a continuous function $\ell:(0,\infty) \to (0,\infty)$ such that $j_{d}(r)$ is comparable to $r^{-d}\ell(r^{-1})$ and $\ell$ satisfies
\begin{align}
C_{1}\left(\frac{R}{r}\right)^{\delta_{1}} \leq \frac{\ell(R)}{\ell(r)} \leq C_{2} \left( \frac{R}{r} \right)^{\delta_{2}} \quad \forall \, 1\leq r\leq R<\infty,\label{assinf}
\\
C_{1}\left(\frac{R}{r}\right)^{\delta_{3}} \leq \frac{\ell(R)}{\ell(r)} \leq C_{2}\left(\frac{R}{r}\right)^{\delta_{4}}\quad \forall \, 0<r\leq R\leq 1, \nonumber
\end{align}
where $0\leq \delta_{1} \leq \delta_{2}<2$ and $0<\delta_{3} \leq \delta_{4} <2$. For additional assumptions on $j_d$, see Assumption \ref{ell_con} and Definition \ref{asm 07.10.16:46}  below. Note that under the condition \eqref{eqn 07.25.17:28}, it turns out that $j_{d}(r)$ is comparable to $r^{-d}\phi(r^{-2})$ (see e.g. \cite{KSV14}), and hence $j_{d}$ satisfies our assumptions with $\ell(r) =\phi(r^{2})$. To study the case where $\delta$ in \eqref{eqn 07.25.17:28} is zero, the scaling condition on $\phi'$, the derivative of $\phi$, is considered in \cite{KM12} instead of the scaling condition on $\phi$. In \cite{KM12}, the authors proved the Harnack inequality for harmonic functions, and estimation of jumping kernels and Green funstions related to corresponding stochastic processes. Under the assumption in \cite{KM12}, we see that $j_{d}(r)$ is comparable to $r^{-d-2}\phi'(r^{-2})$ for $r<1$ and thus we can take $\ell(r)=r^2\phi'(r^2)$ to cover this case.

Now, we explain the advantages of our assumptions. Since we find conditions on $j_d$, our assumptions are more fundamental for the operators given by \eqref{eqn 07.25.17:38}. In the literature  (like \cite{KKL16,KW04}), the conditions (especially smoothness) on  $\psi$, the Fourier multiplier of $\mathcal{L}$, are considered to study non-local operators with general kernel in abstract spaces. Thus, under these approaches, we need to find properties of $\psi$. Non-local operators $-\phi(-\Delta)$ for Bernstein functions $\phi$ fit such approaches and thus they are widely considered (see e.g. \cite{kim2013parabolic,kim2020nonlocal,KPJ22}). However, to the best of the authors' knowledge, the only known relation between $\psi$ and $j_{d}$ is
$$
\psi(\xi) = \int_{\bR^{d}} \left( 1-\cos{(\xi \cdot y)} \right) j_{d}(y) dy.
$$
Thus, to derive properties of $\psi$, we eventually need to impose appropriate conditions on $j_{d}$. Moreover, it seems nontrivial to show desired properties of $\psi$ for general jumping kernel $j_{d}$. Hence, analyzing $\mathcal{L}$ directly under assumption on $j_{d}$ (this approach has been widely used for scalable jumping kernels in e.g. \cite{JSS23,KK16Elliptic,mikulevivcius2019cauchy,mikulevivcius2017p,zhang2013p}) is more essential when the operator is not given with the Fourier multiplier. Indeed, our approach makes it possible to handle operators beyond $-\phi(-\Delta)$, and we do not require any condition (especially, smoothness and scaling conditions) on $\psi$. In Theorem \ref{main theorem2} below, we impose 4-times (which seems optimal in our approach) differentiability on $j_{d}$ which makes it possible to handle operators beyond $-\phi(-\Delta)$.  See Remark \ref{rmk 08.31.14:17} for details of the number of differentiability.

The second advantage is that we allow the constant $\delta_1$ to be zero and thus $\ell$ can be comparable to a slowly varying function at infinity. Here, $f:(0,\infty) \to (0,\infty)$ is called a slowly varying function at infinity if it satisfies $\lim_{r\to\infty} f(\lambda r)/f(r) =1$ for all $\lambda>0$. Since we consider the operator $\mathcal{L}$ given by \eqref{eqn 07.25.17:38} with the kernel $j_d(r)$ which is comparable to $r^{-d}\ell(r^{-1})$, our result covers the kernel which has low singularity near zero. If $\ell$ is a slowly varying function at infinity, then the corresponding Fourier multiplier $\psi$ is comparable to a slowly varying function at infinity. Thus, in this case, $\psi$ does not satisfy the scaling condition. Note that a lower scaling condition on $\psi$ or $\ell$ with positive exponent is essential in the results mentioned above. See Example \ref{exm 10.05.14:52} for examples of $j_{d}$ which satisfies our assumptions.

We show $L_{p}$-regularity of solution by using Calder\'on-Zygmund approach and heat kernel estimates obtained by \cite{cho21heat, KP21}. Since the function $\ell$ can be a slowly varying function at infinity, the heat kernel has different types of estimates compared to the case where $\ell$ satisfies the scaling condition. Indeed, if $\ell(r)$ is comparable to a constant for $r>1$, then the corresponding heat kernel $p(t,x)$ satisfies that $\sup_{x\in\R^d}p(t,x)=\infty$ while there exists $C>0$  such that  $p(t,x)\le Ct^{-d/\alpha}$ for all $t>0$ and $x\in \R^d$ if $\ell(r)=r^{\alpha}$ for $\alpha\in(0,2)$. As we will see in Section \ref{sec3}, the heat kernel bounds are complicated compared to the one for the case of $\delta_1>0$ in \eqref{assinf}, and thus we need more exquisite calculations for BMO-$L_{\infty}$ estimation of the maximal derivative of solution (We describe the outline of our argument in Section \ref{sec 10.05.15:24});
$$
\|\mathcal{L}u\|_{BMO(\bR^{d+1})} \leq C \|f\|_{L_{\infty}(\bR^{d+1})}.
$$
If $\delta_1>0$, then we can check that 
$$
h(r)=r^{-2}\int_{0}^{r}s\ell(s^{-1})ds+\int_{r}^{\infty} s^{-1}\ell(s^{-1}) ds
$$
is comparable to $\ell(r^{-1})$, and the heat kernel $p(t,x)$ is comparable to  (see, for example, \cite[Remark 2.11]{cho21heat})
$$
(h^{-1}(t^{-1}))^{-d}\wedge \frac{t\ell(|x|^{-1})}{|x|^d},
$$
for all $t>0$ and $x\in\R^d$. Our assumptions admit the case where $h(r)$ is not comparable to $\ell(r^{-1})$. The delicate analysis arises here since the size of the parabolic cube is given by using $h$, whereas both $h$ and $\ell$ appear in the estimation of the heat kernel. Moreover, the lack of the scaling condition yields heat kernel with a more complicated form for analysis (see Section \ref{sec3} for detail).  

We finish the introduction with notations. We use $``:="$ or $``=:"$ to denote a definition. The symbol $\bN$ denotes the set of positive integers and $\bN_0:=\bN\cup\{0\}$. As usual $\bR^d$ stands for the Euclidean space of points $x=(x^1,\dots,x^d)$. We set
$$
B_r(x):=\{y\in \bR: |x-y|<r\}, \quad \bR_+^{d+1} := \{(t,x)\in\bR^{d+1} : t>0 \}.
$$
For $i=1,\ldots,d$,
multi-indices $\sigma=(\sigma_{1},\ldots,\sigma_{d})$,
and functions $u(t,x)$ we set
$$
\partial_{x^{i}}u=\frac{\partial u}{\partial x^{i}}=D_{i}u,\quad D^{\sigma}u=D_{1}^{\sigma_1}\cdots D_{d}^{\sigma_d}u,\quad|\sigma|=\sigma_{1}+\cdots+\sigma_{d}.
$$
We also use the notation $D_{x}^{m}$ for arbitrary partial derivatives of
order $m$ with respect to $x$.
For an open set $\cO$ in $\bR^{d}$ or $\bR^{d+1}$, $C_c^\infty(\cO)$ denotes the set of infinitely differentiable functions with compact support in $\cO$. By
$\cS=\cS(\bR^d)$ we denote the class of Schwartz functions on $\bR^d$.
For $p> 1$, by $L_{p}$ we denote the set
of complex-valued Lebesgue measurable functions $u$ on $\R^{d}$ satisfying
\[
\left\Vert u\right\Vert _{L_{p}}:=\left(\int_{\R^{d}}|u(x)|^{p}dx\right)^{1/p}<\infty.
\]
Generally, for a given measure space $(X,\mathcal{M},\mu)$, $L_{p}(X,\cM,\mu;F)$
denotes the space of all $F$-valued $\mathcal{M}^{\mu}$-measurable functions
$u$ so that
\[
\left\Vert u\right\Vert _{L_{p}(X,\cM,\mu;F)}:=\left(\int_{X}\left\Vert u(x)\right\Vert _{F}^{p}\mu(dx)\right)^{1/p}<\infty,
\]
where $\mathcal{M}^{\mu}$ denotes the completion of $\cM$ with respect to the measure $\mu$.
If there is no confusion for the given measure and $\sigma$-algebra, we usually omit the measure and the $\sigma$-algebra.
We denote $a\wedge b := \min\{a,b\}$ and $a\vee b:=\max\{a,b\}$. By $\cF_{d}$ and $\cF^{-1}_{d}$ we denote the $d$-dimensional Fourier transform and the inverse Fourier transform respectively, i.e.
$$
\cF_{d}(f)(\xi):=\hat{f}(\xi):=\int_{\bR^d} e^{-i\xi\cdot x} f(x)dx, \quad \cF^{-1}_{d}(f)(\xi):=\frac{1}{(2\pi)^d}\int_{\bR^d} e^{i\xi\cdot x} f(x)dx.
$$
For any $a,b>0$, we write $a\asymp b$ if there is a constant $c>1$ independent of $a,b$ such that $c^{-1}a\leq b\leq ca$. Finally, if we write $C=C(\dots)$, this means that the constant $C$ depends only on what are in the parentheses. The constant $C$ can differ from line to line.

\medskip

\mysection{Main results} \label{Main result sec}
In this section, we introduce our main results. We first present our spatial non-local operator $\mathcal{L}$. 
For $f\in \cS(\bR^{d})$, define a linear operator $\mathcal{L}$ as
$$
\mathcal{L}f(x) = \int_{\bR^{d}} \left( f(x+y)-f(x)-\nabla f(x) \cdot y \mathbf{1}_{|y| \leq 1} \right) j_{d}(|y|)dy,
$$
where $r\mapsto j_d(r)$ is decreasing and satisfies 
\begin{align}\label{levym}
\int_{\R^d}(1\wedge |x|^2)j_d(|x|)dx<\infty.
\end{align}
Now we introduce objects related to the non-local operator $\mathcal{L}$. Since $j_d(r)$ is decreasing in $r$ and satisfies \eqref{levym}, there is a pure jump isotropic unimodal (see the next paragraph for terminologies) L\'evy process $X=(X_t, t\ge0)$  on $\bR^d$, whose characteristic exponent $\psi$ is given by
\begin{align*}
\psi(\xi)=\psi_X(\xi)=\int_{\R^d}(1-\cos(\xi \cdot x))j_d(|x|)dx
\end{align*}
(see \cite{sato1999,W83}). 

Note that a measure on $\R^d$ is called isotropic unimodal if it is absolutely continuous on $\R^d\setminus\{0\}$, with respect to the Lebesgue measure, with a radial and radially decreasing density. A L\'evy process $X=(X_t, t\ge0)$  on $\bR^d$ is isotropic unimodal if $p_d(t, dx):= \bP(X_t\in dx)$ is isotropic unimodal for all $t > 0$. We also note that the characteristic (or L\'evy-Khintchine) exponent $\psi$ of $X$ is defined by
\begin{align*}
\bE e^{i \xi \cdot X_t} =\int_{\R^d}e^{i \xi \cdot x}p_d(t,dx)=e^{-t\psi(\xi)}.
\end{align*}
Under the above setting, we call $j_d(x)=j_d(|x|)$ the jumping kernel for $X$. Also, in this situation, we allow the following abuse of notations;
$$\psi(|x|) = \psi(x), \quad j_d(x)dx=j_d(|x|)dx=j_d(dx) \quad \forall\, x \in \R^d.$$

By \cite[Theorem 31.5]{sato1999} we can understand $\mathcal{L}$ as the infinitesimal generator of $X$, and nonlocal operator with Fourier multiplier $-\psi(|\xi|)$. Precisely speaking, for $f\in \cS(\bR^{d})$,  we have the following relation
\begin{equation}\label{eqn 06.28.11:33}
\mathcal{L}f(x) = \lim_{t \downarrow 0} \frac{ \bE f(x+X_t) -f(x)}{t} = \cF^{-1}(-\psi(|\xi|)\cF(f)(\xi))(x).
\end{equation}
In this context, we also use notations $\mathcal{L}_{\psi}$ or $\mathcal{L}_{X}$ instead of $\mathcal{L}$ in this article. Also, it is known that (see e.g. \cite[Lemma A.1]{KPJ22}) for $f\in \mathcal{S}(\bR^{d})$ and $r>0$, 
\begin{align}\label{eqn 09.28.11:26}
\mathcal{L}f(x) = \int_{\bR^{d}} \left( f(x+y)-f(x)-\nabla f(x) \cdot y \mathbf{1}_{|y| \leq r} \right) j_{d}(y)dy.
\end{align}

Now we give an important example of isotropic unimodal L\'evy processes
\begin{example}\label{exm 04.04.17:27}
Let $B=(B_{t},t\geq0)$ be  a $d$-dimensional Brownian motion and $S=(S_{t}, t\geq0)$ be a subordinator (i.e., $1$-dimensional increasing L\'evy process) independent of $B$. A stochastic process $Y=(Y_{t},t\geq0)$  defined by $Y_t:=B_{S_{t}}$ is called subordinate Brownian motion. Then there is a function $\phi$ with the following representation 
\begin{equation*}
\phi(\lambda)=b\lambda + \int_{(0,\infty)} (1-e^{-\lambda t})\mu(dt),
\end{equation*}
satisfying $\bE[e^{-\lambda S_t}]=e^{-t\phi(\lambda)}$. 
Here, $b\geq 0$ and $\mu$ is a measure  satisfying  $\int_{(0,\infty)} (1\wedge t) \mu(dt)<\infty$. We call $\phi$ a Bernstein function and  $\mu$ the L\'evy measure of $\phi$ (see \cite{SSV12}).

It is well-known that the L\'evy-Khintchine exponent of $Y$ is $\xi\mapsto\phi(|\xi|^2)$ and the L\'evy measure of $Y$ has the density $J_d(x)=J_d(|x|)$, where
\begin{equation}\label{eqn 09.07.17:36}
J_{d}(r)=\int_{(0,\infty)} (4\pi t)^{-d/2} e^{-r^2/(4t)} \mu(dt).
\end{equation}
Hence, $\mathcal{L}_{Y}$ also has representation \eqref{eqn 06.28.11:33} and \eqref{eqn 09.28.11:26} with $\phi(|\cdot|^{2})$ and $J_{d}$ in place of $\psi$ and $j_{d}$  respectively. In particular, by taking $\phi(\lambda)=\lambda^{\alpha/2}$ ($\alpha\in (0,2)$), we obtain the fractional Laplacian $\Delta^{\alpha/2}=-(-\Delta)^{\alpha/2}$, which is the infinitesimal generator of a rotationally symmetric $\alpha$-stable process in $\bR^d$. 
\end{example}

To describe the regularity of solutions, we introduce Sobolev space related to the operator $\mathcal{L}$.
For $\gamma\in\bR$, and $u\in \cS(\bR^{d})$, define linear operators 
\begin{equation*}
(-\mathcal{L})^{\gamma/2}=(-\mathcal{L}_{\psi})^{\gamma/2}, \quad (1-\mathcal{L})^{\gamma/2}=(1-\mathcal{L}_{\psi})^{\gamma/2}
\end{equation*}
as follows
\begin{align*}
\cF\{(-\mathcal{L})^{\gamma/2}u\} = (\psi(|\xi|))^{\gamma/2}\cF(u)(\xi), \quad
\cF\{(1-\mathcal{L})^{\gamma/2}u\} = (1+\psi(|\xi|))^{\gamma/2}\cF(u)(\xi).
\end{align*}
For $1<p<\infty$, let $H_p^{\psi,\gamma}$ be the closure of $\cS(\bR^{d})$ under the norm
\begin{equation*}
\|u\|_{H_p^{\psi,\gamma}}:=\|\cF^{-1}\{\left(1+\psi(|\cdot|)\right)^{\gamma/2}\cF(u)(\cdot)\}\|_{L_p}<\infty.
\end{equation*}
Then from the definition of $H^{\psi,\gamma}_{p}$ the operator $(1-\mathcal{L})^{\gamma/2}$ can be extended from $\cS(\bR^{d})$ to $L_{p}$. Throughout this article, we use the same notation $(1-\mathcal{L})^{\gamma/2}$ for this extension. For more information, see e.g. \cite{farkaspsi}.  Also note that if $\psi(|\xi|)=|\xi|^{2}$, then $H^{\psi,\gamma}_{p}$ is a standard Bessel potential space $H^{\gamma}_{p}$ and $H^{\psi,0}_{p}=L_{p}$ due to the definition.

The following lemma is a collection of useful properties of $H^{\psi,\gamma}_{p}$. For the proof, see e.g. \cite[Lemma 2.1]{KP21}.
\begin{lemma}\label{H_p^phi,gamma space}
Let $1<p<\infty$ and let $\gamma\in\bR$.

(i) The space $H_p^{\psi,\gamma}$ is a Banach space.

(ii) For any $\mu\in\bR$, the map $(1-\mathcal{L})^{\mu/2}$ is an isometry from $H^{\psi,\gamma}_{p}$ to $H^{\psi,\gamma-\mu}_{p}$.

(iii) If $\mu>0$, then we have continuous embeddings $H_p^{\psi,\gamma+\mu}\subset H_p^{\psi,\gamma}$ in the sense that
\begin{equation*}
\|u\|_{H_p^{\psi,\gamma}}\leq C \|u\|_{H_p^{\psi,\gamma+\mu}},
\end{equation*}
where the constant $C$ is independent of $u$. 

(iv) For any $u\in H^{\psi,\gamma+2}_{p}$, we have
\begin{equation}\label{eqn 03.25.15:03}
\left(\|u\|_{H^{\psi,\gamma}_p}+\|\mathcal{L}u\|_{H^{\psi,\gamma}_p}\right) \asymp  \|u\|_{H_p^{\psi,\gamma+2}}.
\end{equation}

\end{lemma}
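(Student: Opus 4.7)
The plan is to prove the four items in order, treating (iii) as the central analytic step and then deriving (iv) from (ii) and (iii). Throughout, one works first on $\cS(\bR^{d})$, where the Fourier multipliers act pointwise, and extends by density using the definition of $H_{p}^{\psi,\gamma}$ as a closure.

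For (i), the map $u \mapsto \cF^{-1}\{(1+\psi(|\cdot|))^{\gamma/2}\cF(u)\}$ is injective on $\cS(\bR^{d})$ because $(1+\psi(|\xi|))^{\gamma/2}$ is strictly positive everywhere, so $\|\cdot\|_{H_{p}^{\psi,\gamma}}$ is a genuine norm on $\cS(\bR^{d})$; its completion, being a Banach space by construction, gives (i). For (ii), the pointwise multiplicative identity $(1+\psi(|\xi|))^{(\gamma-\mu)/2}(1+\psi(|\xi|))^{\mu/2}=(1+\psi(|\xi|))^{\gamma/2}$ combined with the definition of the norms yields, for $u\in\cS(\bR^{d})$,
\[
\|(1-\mathcal{L})^{\mu/2}u\|_{H_{p}^{\psi,\gamma-\mu}} = \|\cF^{-1}\{(1+\psi(|\cdot|))^{\gamma/2}\cF(u)\}\|_{L_{p}} = \|u\|_{H_{p}^{\psi,\gamma}},
\]
and density extends the isometry to all of $H_{p}^{\psi,\gamma}$.

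Part (iii) is equivalent, by (ii), to the $L_{p}$-boundedness of the resolvent-type operator $(1-\mathcal{L})^{-\mu/2}$, and this is where the \emph{main obstacle} lies: because $\psi$ need not be smooth---indeed, the paper allows it to be comparable to a slowly varying function at infinity---Mikhlin--H\"ormander-type multiplier theorems are not directly available. Instead I would use subordination. Writing
\[
(1+\psi(|\xi|))^{-\mu/2} = \frac{1}{\Gamma(\mu/2)}\int_{0}^{\infty}t^{\mu/2-1}e^{-t}e^{-t\psi(|\xi|)}\,dt
\]
and recognizing $e^{-t\psi(|\xi|)}$ as the characteristic function of the transition distribution $p_{d}(t,\cdot)$ of $X$, one obtains
\[
(1-\mathcal{L})^{-\mu/2}f = \frac{1}{\Gamma(\mu/2)}\int_{0}^{\infty}t^{\mu/2-1}e^{-t}P_{t}f\,dt, \qquad P_{t}f(x)=\bE f(x+X_{t}).
\]
Since $P_{t}$ is convolution against the probability measure $p_{d}(t,dx)$, it is a contraction on $L_{p}$, and Minkowski's integral inequality then produces $\|(1-\mathcal{L})^{-\mu/2}f\|_{L_{p}}\le \|f\|_{L_{p}}$. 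Applying this bound to $f=(1-\mathcal{L})^{(\gamma+\mu)/2}u$ and invoking (ii) yields (iii).

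For (iv), by (ii) with $\mu=2$ one has $\|u\|_{H_{p}^{\psi,\gamma+2}} = \|(1-\mathcal{L})u\|_{H_{p}^{\psi,\gamma}}$. The operator identity $\mathcal{L} = I - (1-\mathcal{L})$ combined with the triangle inequality immediately gives $\|u\|_{H_{p}^{\psi,\gamma+2}} \le \|u\|_{H_{p}^{\psi,\gamma}} + \|\mathcal{L}u\|_{H_{p}^{\psi,\gamma}}$, and after using (iii) to absorb the lower-order term $\|u\|_{H_{p}^{\psi,\gamma}}\le C\|u\|_{H_{p}^{\psi,\gamma+2}}$, one obtains the reverse bound $\|u\|_{H_{p}^{\psi,\gamma}} + \|\mathcal{L}u\|_{H_{p}^{\psi,\gamma}} \le C\|u\|_{H_{p}^{\psi,\gamma+2}}$, completing the equivalence claimed in \eqref{eqn 03.25.15:03}.
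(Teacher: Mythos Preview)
Your proof is correct. The paper itself does not prove this lemma at all; it simply cites \cite[Lemma 2.1]{KP21} and moves on. Your argument is a clean, self-contained proof that relies on nothing beyond the Gamma-function integral representation of $(1+\lambda)^{-\mu/2}$ and the $L_{p}$-contractivity of the transition semigroup $P_{t}$; in particular, the subordination step in (iii) neatly sidesteps any smoothness requirement on $\psi$, which is precisely the point the paper emphasizes elsewhere. The derivation of (iv) from (ii), (iii), and the algebraic identity $\mathcal{L}=I-(1-\mathcal{L})$ is standard and correct.
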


For $p,q\in(1,\infty), \gamma\in\bR$ and $T<\infty$, we denote
\begin{equation*}
\bH_{q,p}^{\psi,\gamma}(T):=L_q\left((0,T); H_p^{\psi,\gamma}\right), \qquad \bL_{q,p}(T):=\bH_{q,p}^{\psi,0}(T).
\end{equation*}
We write $u\in C_{p}^{\infty}([0,T]\times\bR^d)$ if $D^m_x u, \partial_t D^m_x u \in C([0,T];L_p)$ for any $m\in \bN_{0}$. 

\begin{definition}
 \label{defn defining}
Let  $1<p,q<\infty$, $\gamma\in\bR$, and $T<\infty$.

(i) We write $u\in {\bH_{q,p}^{\psi,\gamma+2}}(T)$ if there exists a sequence $u_n\in C^{\infty}_{p}([0,T]\times \bR^d)$ satisfying
\begin{equation*}
\|u-u_n\|_{\bH_{q,p}^{\psi,\gamma+2}(T)} \to 0 \quad \text{and} \quad \|\partial_{t}u_{n} - \partial_{t}u_{m} \|_{\mathbb{H}^{\psi,\gamma}_{q,p}(T)} \to 0
\end{equation*}
as $n,m \to \infty$. We call this sequence $u_n$ a defining sequence of $u$, and we define
\begin{equation*}
\partial_t  u= \lim_{n \to \infty} \partial_t u_n \text{ in } \bH_{q,p}^{\psi,\gamma}(T).
\end{equation*}
The norm in $\bH_{q,p}^{\psi,\gamma+2}(T)$ is naturally given by
\begin{equation*}
\|u\|_{{\bH_{q,p}^{\psi,\gamma+2}}(T)}=\|u\|_{\bH_{q,p}^{\psi,\gamma+2}(T)}+\|\partial_{t}u\|_{\bH_{q,p}^{\psi,\gamma}(T)}.
\end{equation*}

(ii) We write $u\in{\bH_{q,p,0}^{\psi,\gamma+2}(T)}$, if there is a defining sequence $u_{n}$ of $u$ such that $u_{n}(0,\cdot)=0$ for all $n$.

\end{definition}

\begin{remark} \label{Hvaluedconti}
(i) Obviously, $\bH_{q,p}^{\psi,\gamma+2}(T)$ is a Banach space.

(ii) By following the argument in \cite[Remark 3]{mikulevivcius2017p}, we can show that the embedding $H_p^{2n} \subset H_p^{\psi,2n}$ is continuous for any $n\in\bN$ (see e.g. \cite[Remark 2.3 (ii)]{KP21}). 
 \end{remark}

Now we give some basic properties of $\mathbb{H}^{\psi,\gamma+2}_{q,p}(T)$. For proof, see Section \ref{s:App}.

\begin{lemma} \label{basicproperty}
Let  $1<p,q<\infty$, $\gamma\in\R$, and $T<\infty$.

(i) The space $\bH_{q,p,0}^{\psi,\gamma+2}(T)$ is a closed subspace of $\bH_{q,p}^{\psi,\gamma+2}(T)$.

(ii) $C_c^\infty(\bR^{d+1}_+)$ is dense in $\bH_{q,p,0}^{\psi,\gamma+2}(T)$.

(iii) For any $\gamma,\nu\in\bR$, $(1-\mathcal{L})^{\nu/2}:\bH_{q,p}^{\psi,\gamma+2}(T)\to\bH_{q,p}^{\psi,\gamma-\nu+2}(T)$ is an isometry and for any $u\in \mathbb{H}^{\psi,\gamma+2}_{q,p}(T)$, we have
$$
\partial_{t}(1-\mathcal{L})^{\nu/2}u = (1-\mathcal{L})^{\nu/2}\partial_{t}u. 
$$
\end{lemma}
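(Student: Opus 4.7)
The plan is to address the three parts in order: (i) via diagonal extraction from defining sequences, (ii) by time-truncation followed by spatial cutoff and mollification, and (iii) by extending the Fourier-side commutation for smooth functions by density.

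For (i), suppose $u^{(k)}\in\bH_{q,p,0}^{\psi,\gamma+2}(T)$ converges in $\bH_{q,p}^{\psi,\gamma+2}(T)$ to some $u$. Each $u^{(k)}$ has a defining sequence $\{u^{(k)}_n\}\subset C_p^\infty([0,T]\times\bR^d)$ with $u^{(k)}_n(0,\cdot)=0$. Choose $n_k$ large enough that $u^{(k)}_{n_k}$ is within $1/k$ of $u^{(k)}$ in the full graph norm; note that $\{\partial_t u^{(k)}\}$ is automatically Cauchy in $\bH_{q,p}^{\psi,\gamma}(T)$, since the norm in $\bH_{q,p}^{\psi,\gamma+2}(T)$ controls both summands in Definition \ref{defn defining}. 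The diagonal $\{u^{(k)}_{n_k}\}$ then serves as a defining sequence for $u$ whose members vanish at $t=0$, giving $u\in\bH_{q,p,0}^{\psi,\gamma+2}(T)$.

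For (ii), by (i) it suffices to approximate a fixed $u\in C_p^\infty([0,T]\times\bR^d)$ with $u(0,\cdot)=0$ by elements of $C_c^\infty(\bR^{d+1}_+)$. Pick a smooth time cutoff $\eta_\varepsilon(t)$ vanishing for $t\le\varepsilon$ and equal to $1$ for $t\ge 2\varepsilon$, with $|\eta_\varepsilon'|\le C\varepsilon^{-1}$. Dominated convergence yields $\eta_\varepsilon u\to u$ in $\bH_{q,p}^{\psi,\gamma+2}(T)$ and $\eta_\varepsilon\partial_t u\to\partial_t u$ in $\bH_{q,p}^{\psi,\gamma}(T)$; the critical term is $\eta_\varepsilon'u$. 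Since $u\in C_p^\infty$ implies $u,\partial_t u\in C([0,T];H_p^{\psi,\alpha})$ for every $\alpha$, the vanishing $u(0,\cdot)=0$ gives $\|u(t,\cdot)\|_{H_p^{\psi,\gamma}}\le Ct$ near zero, so
$$
\|\eta_\varepsilon'u\|_{\bH_{q,p}^{\psi,\gamma}(T)}^{q}\le C\int_{\varepsilon}^{2\varepsilon}\varepsilon^{-q}t^{q}\,dt\longrightarrow 0.
$$
This secures support in $[\varepsilon,T]$; a smooth extension of $u$ past $t=T$ followed by a compactly supported time cutoff yields compact support in $(0,\infty)$, and a subsequent spatial cutoff $\chi_R(x)$ together with mollification in $x$ produces approximations in $C_c^\infty(\bR^{d+1}_+)$. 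Convergence in $\bH_{q,p}^{\psi,\gamma+2}(T)$ uses that $(1-\mathcal{L})^{(\gamma+2)/2}$ is a Fourier multiplier in $x$ and hence commutes with spatial convolution. The main obstacle is precisely the $\eta_\varepsilon'u$ estimate, which forces careful use of the first-order vanishing of $u$ at $t=0$.

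For (iii), on a smooth $u\in C_p^\infty([0,T]\times\bR^d)$ the operator $(1-\mathcal{L})^{\nu/2}$ acts only in $x$, so continuity of $(1-\mathcal{L})^{\nu/2}:L_p\to H_p^{\psi,-\nu}$ applied to the difference quotient in $t$ gives $\partial_t(1-\mathcal{L})^{\nu/2}u=(1-\mathcal{L})^{\nu/2}\partial_t u$ on $C_p^\infty$. For a general $u\in\bH_{q,p}^{\psi,\gamma+2}(T)$ with defining sequence $u_n$, Lemma \ref{H_p^phi,gamma space}(ii) applied pointwise in $t$ and integrated shows that $\{(1-\mathcal{L})^{\nu/2}u_n\}$ is Cauchy in $L_q(0,T;H_p^{\psi,\gamma-\nu+2})$ with the same norm as $\{u_n\}$ in $L_q(0,T;H_p^{\psi,\gamma+2})$, and similarly $\{\partial_t(1-\mathcal{L})^{\nu/2}u_n=(1-\mathcal{L})^{\nu/2}\partial_tu_n\}$ is Cauchy in $L_q(0,T;H_p^{\psi,\gamma-\nu})$ with the same norm as $\{\partial_t u_n\}$ in $L_q(0,T;H_p^{\psi,\gamma})$. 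Hence $(1-\mathcal{L})^{\nu/2}u_n$ is a defining sequence for a limit $v\in\bH_{q,p}^{\psi,\gamma-\nu+2}(T)$; identifying $v$ with $(1-\mathcal{L})^{\nu/2}u$, both the isometry property and the commutation identity $\partial_t(1-\mathcal{L})^{\nu/2}u=(1-\mathcal{L})^{\nu/2}\partial_t u$ follow by passing $n\to\infty$.
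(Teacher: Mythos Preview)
Your argument is correct, and parts (i) and (iii) match the paper's proof essentially line for line. The difference lies in (ii). The paper first performs the spatial cutoff (via Remark~\ref{Hvaluedconti}(ii)) to reduce to compactly supported $u$, extends $u$ by zero outside $[0,T]$, and then convolves in $t$ with a \emph{shifted} mollifier $\eta_{1,\varepsilon}$ supported in $(\varepsilon,2\varepsilon)$; this automatically forces $u^\varepsilon(t,\cdot)=0$ for $t<\varepsilon$, and the condition $u(0,\cdot)=0$ enters only to kill the boundary term in the integration-by-parts identity $\partial_t u^\varepsilon=(\partial_t u)^\varepsilon$. Your route instead multiplies by a time cutoff $\eta_\varepsilon$ and confronts the commutator $\eta_\varepsilon'u$ directly, using the quantitative bound $\|u(t,\cdot)\|_{H_p^{\psi,\gamma}}\le Ct$ that follows from $u(0,\cdot)=0$ together with $\partial_t u\in C([0,T];H_p^{\psi,\gamma})$. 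Both approaches exploit the zero initial condition in an essential way; the paper's shifted mollification is slightly slicker in that it simultaneously produces $C^\infty$-regularity in $t$ and avoids the commutator estimate, whereas your cutoff argument is more elementary but leaves you with a function that is only $C^1$ in $t$, so a subsequent time mollification (after extension past $T$) is still needed to land in $C_c^\infty(\bR^{d+1}_+)$---a step you allude to but do not spell out.
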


Now, we present a collection of our assumptions and related facts.

\begin{assumption}\label{ass bernstein}
Let $\ell:(0,\infty) \to (0,\infty)$ be a continuous function satisfying
\begin{align}\label{H:s}
&C_{1}\left(\frac{R}{r}\right)^{\delta_{1}} \leq \frac{\ell(R)}{\ell(r)} \leq C_{2} \left( \frac{R}{r} \right)^{\delta_{2}} \quad \forall \, 1\leq r\leq R<\infty,
\end{align}
where the constants $C_{1},C_{2}>0$ and $0\leq \delta_{1}\leq \delta_{2}<2$. Also, we assume that there exists $0<\delta_{3}\leq \delta_{4}<2$ such that
\begin{align}\label{H:l}
C_{1}\left(\frac{R}{r}\right)^{\delta_{3}} \leq \frac{\ell(R)}{\ell(r)} \leq C_{2}\left(\frac{R}{r}\right)^{\delta_{4}}\quad \forall \, 0<r\leq R\leq 1.
\end{align}
\end{assumption}

Now, define 
\begin{equation*}
\begin{gathered}
K(r):=r^{-2}\int_{0}^{r}s\ell(s^{-1})ds, \quad L(r):=\int_{r}^{\infty} s^{-1}\ell(s^{-1}) ds,\quad
h(r):= K(r) + L(r).
\end{gathered}
\end{equation*}

\begin{remark}
Since $h'(r)=K'(r)+L'(r) = -2r^{-1}K(r)<0$, $h$ is strictly decreasing in $r$. Thus, the inverse function $h^{-1}$ of $h$ is well-defined. 
\end{remark}

Now we introduce the second assumption on $\ell$. Depending on whether $\ell$ is bounded or not, we have two different types of heat kernel bounds.

\begin{assumption}\label{ell_con}
The function $\ell$ in Assumption \ref{ass bernstein} satisfies

\textit{either}
\\
(i)  $\limsup_{r\to\infty}\ell(r)<\infty$;

\textit{or}
\\
(ii)  $ \limsup_{r\to\infty}\ell(r)=\infty \quad\text{and}\quad\ell(r)\asymp\sup_{s\le r}\ell(s)$. 

If $\ell$ satisfies (ii), then $\ell$ further satisfies that

\hspace{5mm} \textit{either} (ii)--(1) 
\begin{equation*}
h(r) \asymp \ell(r^{-1}) \quad \text{for}\; \, r \leq 1
\end{equation*}

\hspace{5mm} \textit{or} (ii)--(2)  for any $a>0$ there is a constant $C(a)>0$ such that
\begin{equation}\label{eqn 08.12.17:59}
\sup_{0<r<1} h(r)\exp{\left( -a\frac{h(r)}{\ell(r^{-1})}  \right)} \leq C(a).
\end{equation}

\end{assumption}

Note that $K$, $L$ and $h$ are independent of the dimension $d$.
Clearly, $L$ is strictly decreasing in $r$. Due to the definition, we easily see that   
\begin{equation*}
\begin{gathered}
 K(r) \asymp r^{-2}\int_{|y|\leq r}|y|^{2} j_{d}(|y|)dy, 
\quad L(r) \asymp \int_{|y|\geq r} j_{d}(|y|)dy, 
\quad
h(r) \asymp r^{-2}\int_{\bR^{d}} \left( r^{2} \wedge |y|^{2} \right) j_{d}(|y|)dy \quad \forall\,r>0.
\end{gathered}
\end{equation*}

For $n\in \bN_{0}$, and a function $f:(0,\infty) \to (0,\infty)$, define
$$
\mathcal{T}^{0}f(r) = f(r), \quad \mathcal{T}f(r)=:-\frac{1}{r}\left(\frac{d}{dr}f\right)(r), \quad \mathcal{T}^{n}f(r) :=\mathcal{T}^{n-1}(\mathcal{T}f)(r) \quad n\geq 2.
$$

The following condition will be used for our jumping kernel $j_{d}$.

\begin{definition}\label{asm 07.10.16:46} 
Let $d\in \bN$, and let $m\in\bN_{0}$.
We say that a function $f:(0,\infty)\to(0,\infty)$ satisfies $\bold{H}(d,m)$ if the following holds:

(i) $f$ and $\mathcal{T}f ,\dots, \mathcal{T}^{m}f$
are nonnegative and decreasing in $r\in(0,\infty)$.

(ii) For each $n \leq m$, There exist constants $\kappa_{1,n},\kappa_{2,n}>0$ such that for $r>0$,
\begin{equation}\label{e:H}
\kappa_{1,n} r^{-d-n} \ell(r^{-1}) \leq (-1)^{n}\frac{d^{n}}{dr^{n}} f(r) \leq \kappa_{2,n} r^{-d-n}\ell(r^{-1}).
\end{equation}
\end{definition}

In the rest of the article, we use notations 
\begin{gather*}
\text{{\boldmath$\delta$}} =(\delta_{1},\delta_{2},\delta_{3},\delta_{4}), \quad
\text{{\boldmath$\kappa$}}_{m}= (\kappa_{1,0},\kappa_{2,0},\dots,\kappa_{1,m},\kappa_{2,m}),  \quad 
\text{{\boldmath$\kappa$}}= (\kappa_{1,0},\kappa_{2,0},\dots)
\end{gather*}
instead of listing $\delta_{i},\kappa_{i,j}$ for notational convenience.

\begin{remark}\label{rmk 09.27.14:11}
(i) If $J_{d}$ is the jump kernel of subordinate Brownian motion, then using \eqref{eqn 09.07.17:36}, we can check that for any $n\in \bN$, $\mathcal{T}^{n}J_{d}$ is nonnegative and decreasing function.

(ii) Using \eqref{e:H}, if $j_{d}$ satisfies $\bold{H}(d,m)$, then we can check that  $\mathcal{T}j_{d}(r) \asymp r^{-d-2}\ell(r^{-1})$. Also, by the product rule of differentiation, for any $n\leq m-1$, we have 
\begin{align*}
(-1)^{n}\frac{d^{n}}{dr^{n}}\left( \mathcal{T}j_{d} \right)(r)&=(-1)^{n}\frac{d^{n}}{dr^{n}}\left( -\frac{1}{r}\frac{d}{dr}j_{d} \right)(r)
\\
&=(-1)^{n+1}\sum_{k=0}^{n} \binom{n}{k}(-1)^{k}k!r^{-1-k}\frac{d^{n-k+1}}{dr^{n-k+1}}j_{d}(r)
\\
&=\sum_{k=0}^{n} \binom{m}{k}(-1)^{k}k!r^{-1-k}(-1)^{k}(-1)^{n-k+1}\frac{d^{n-k+1}}{dr^{n-k+1}}j_{d}(r)
\\
&=\sum_{k=0}^{n} \binom{n}{k}k!r^{-1-k}(-1)^{n-k+1}\frac{d^{n-k+1}}{dr^{n-k+1}}j_{d}(r).
\end{align*}
Hence, $\mathcal{T}j_{d}$ satisfies $\bold{H}(d+2,m-1)$. We can also check that if $j_{d}$ satisfies $\bold{H}(d,m)$, then $\mathcal{T}^{n}j_{d}$ satisfies $\bold{H}(d+2n,m-n)$ for all $n\leq m$. 
\end{remark}

\medskip

In the following remarks, we collect some useful facts about $\ell,K,L,h,\psi$ and $j_{d}$.

\begin{remark}\label{r:assum}
Using \eqref{H:s},  we see  that 
\begin{align}\label{ll1}
\liminf_{r\to\infty}\ell(r)>0,
\end{align}
which is essential to obtain estimations of heat kernel for large time.
\end{remark}

\begin{remark}\label{r:relations}
(i) Using inequalities (6) and (7) in \cite{BGR14a}, we can check that there exists a constant $C_{0}(<1)$ depends only on $d$ such that
\begin{equation}\label{eqn 05.27.15:40}
C_{0}h(r)\le \psi(r^{-1})\le 2h(r) \quad \forall\, 0<r<\infty.
\end{equation}
 Also, it is known that (see \cite[Lemma 2.1]{cho21heat}) there exists $c_{1}>0$ such that
\begin{equation}\label{eqn 08.31.14:07}
L(r) \leq h(r) \leq c_{1}L(r) \quad \forall\, 0<r\leq 1.
\end{equation}
Moreover, for any $r>0$, and $0<c<1$, 
\begin{equation}\label{eqn 05.12:15:07}
h(cr) \leq c(d)\kappa_{1,0}^{-1} c^{-2} r^{-2} \int_{\bR^{d}} \left( c^{2} r^{2} \wedge |y|^{2} \right) j_{d}(|y|) dy  \leq \kappa_{2,0}\kappa_{1,0}^{-1} c^{-2} h(r).
\end{equation}

(ii) Under \eqref{H:s} and \eqref{H:l}, it is known that (see inequalities (2.2) and (2.3) in \cite{cho21heat}) 
\begin{align}\label{Kellcomp}
K(r)\asymp \ell(r^{-1})\asymp r^d j_d(r)\quad\text{for}\;\;r>0,
\end{align}
By \eqref{H:l},
$$
L(r)=\int^{\infty}_{r}s^{-1}\ell(s^{-1})ds\le C_{1}^{-1}r^{\delta_3}\ell(r^{-1})\int^{\infty}_{r}s^{-1-\delta_3}ds=C_{1}^{-1}\delta_{3}^{-1}\ell(r^{-1}) \quad \forall \, r\geq 1.
$$
This and \eqref{Kellcomp} imply 
and
$$
h(r)\asymp K(r) \asymp \ell (r^{-1}) \quad  \forall \, r\ge1.
$$
Combining this with \eqref{eqn 05.27.15:40} we have
\begin{align}\label{eqn 09.21.13:27}
\psi(r^{-1})\asymp h(r) \asymp K(r) \asymp \ell(r^{-1}) \quad \forall\, r\ge1.
\end{align}
Finally, if $\delta_{1}$ in \eqref{H:s} is positive, then 
\begin{align*}
K(r) \asymp  h(r) \asymp \ell(r^{-1}) \quad \text{for}\; \, r>0.
\end{align*}

(iii) From \eqref{eqn 05.12:15:07}, we can obtain
\begin{equation*}
\frac{R}{r} \leq \kappa_{1,0}^{-1}\kappa_{2,0} \left( \frac{h^{-1}(r)}{h^{-1}(R)} \right)^{2} \quad \forall\, 0<r<R<\infty.
\end{equation*}

(iv) By a direct calculation, we can check that
\begin{align}
K(r/2) = (r/2)^{-2} \int_{0}^{r/2}s\ell(s^{-1})ds 
 \leq  (r/2)^{-2} \int_{0}^{r}s\ell(s^{-1})ds   \leq  4K(r).   \label{eqn 08.23.11:50}
\end{align}
\end{remark}

\begin{remark}\label{rmk 09.15.18:04}
(i) Since $h$ is decreasing, \eqref{eqn 08.12.17:59} is equivalent to 
$$
\sup_{0<r} h(r)e^{-ah(r)/\ell(r^{-1})} \leq C(a).
$$ 

(ii) By \cite[Remark 2.5 (iii)]{KP21}, we see that \eqref{eqn 08.12.17:59} is equivalent to the following: for any $a>0$, there exists $C(a)>0$ such that
\begin{equation}\label{con_ii}
\sup_{r>1} \int^{r}_{1}\frac{\ell(s)}{s}ds\cdot\exp{\left( -\frac{a}{\ell(r)}\int^{r}_{1}\frac{\ell(s)}{s}ds  \right)} \leq C(a).
\end{equation}

(iii) From \cite[Lemma A.1 (ii), (iv)]{KP21}, we see that $\log{(1+r^{b})}$ satisfies \eqref{con_ii} for any $b>0$.
\end{remark}

\begin{example}\label{exm 10.05.14:52}
We give some examples satisfying our assumptions. See \cite[Example 2.1]{KP21} for information on corresponding characteristic exponents.

(i) Let $\alpha\in(0,2)$, $\ell(r) = r^{\alpha}$, and let $j_{d}(r) := r^{-d}\ell(r^{-1})$. Then $\ell$ satisfies Assumption \ref{ass bernstein} with $\delta_{1}=\delta_{2}=\delta_{3}=\delta_{4}=\alpha$, and Assumption \ref{ell_con} (ii)--(1). Also, $j_{d}$ satisfies $\bold{H}(d,m)$ for any $m\in\bN_{0}$.

(ii) Let $\alpha \in (0,2)$, and $\ell(r) = 1\wedge r^{\alpha}$. Define
$$
j_{d}(r) :=r^{-d}\left(\frac{r^{-\alpha / \lceil \alpha \rceil}}{1+r^{-\alpha / \lceil \alpha \rceil}}\right)^{\lceil \alpha \rceil},
$$
where $\lceil \alpha \rceil$ is the smallest integer greater than or equal to $\alpha$. Then $\ell$ satisfies Assumption \ref{ass bernstein} with $\text{{\boldmath$\delta$}}=(0,0,\alpha,\alpha)$, and Assumption \ref{ell_con} (i). Also, using Lemma \ref{lem 08.31.16:04}, and Lemma \ref{lem 08.31.16:04-2} (i), we can check that $j_{d}$ satisfies $\bold{H}(d,m)$ for any $m\in \bN_{0}$.

(iii) Let $\alpha\in (0,2)$, and $\ell(r) = \log{(1+r^{\alpha})}$. Define
$$
j_{d}(r) := r^{-d}\log{(1+r^{-\alpha / \lceil \alpha \rceil})} \left(\frac{r^{-\alpha / \lceil \alpha \rceil}}{1+r^{-\alpha / \lceil \alpha \rceil}}\right)^{\lceil \alpha \rceil-1}.
$$ 
Then $\ell$ satisfies Assumption \ref{ass bernstein} with $\text{{\boldmath$\delta$}}=(0,1,\alpha,\alpha)$, and Assumption \ref{ell_con} (ii)--(2). Also, using Lemma \ref{lem 08.31.16:04}, and Lemma \ref{lem 08.31.16:04-2}, we can check that $j_{d}$ satisfies $\bold{H}(d,m)$ for any $m\in \bN_{0}$.

(iv) For $n\in \bN$, let
$$
f_{n}(r) = \begin{cases} \frac{1}{n!}(1-r)^{n} & \text{for} \quad 0\leq r \leq 1
\\
0 & \text{for} \quad 1\leq r.
\end{cases}
$$
Then we can check that $f_{n}$ is $n$-times differentiable, and $f^{(n+1)}_{n}(1)$ does not exist. Using this, we can check that $\tilde{j}_{d}(r):=j_{d}(r)(1+f_{n}(r))$, where $j_{d}$ is taken from the above examples is not infinitely differentiable, and hence it cannot be the jump kernel of subordinate Brownian motion. 
\end{example}

The following theorem is the main result of this article. 

\begin{theorem} \label{main theorem2}
Let $d \in \bN$, $p,q\in(1,\infty)$, $\gamma \in \bR$, and $T\in(0,\infty)$. Suppose  Assumption \ref{ass bernstein}, and Assumption \ref{ell_con} hold. Also, suppose that $j_{d}$ satisfies $\bold{H}(d,4)$. Then for any  $f\in \bH_{q,p}^{\psi,\gamma}(T)$, the equation
\begin{equation}\label{mainequation2}
\partial_{t}u = \mathcal{L}u + f,\quad t>0\,; \quad u(0,\cdot)=0
\end{equation}
has a unique solution $u$ in the class $\bH_{q,p,0}^{\psi,\gamma+2}(T)$, and for the solution $u$ it holds that
\begin{equation} \label{mainestimate2}
\|u\|_{\bH_{q,p}^{\psi,\gamma+2}(T)}\leq C \|f\|_{\bH_{q,p}^{\psi,\gamma}(T)},
\end{equation}
where $C>0$ is a constant depending only on $d,p,q,\gamma,\ell,T, \text{{\boldmath$\delta$}}$ and  $\text{{\boldmath$\kappa$}}_{4}$. Furthermore, we have
\begin{equation}\label{eqn 05.27.14:12}
\| \mathcal{L} u\|_{\mathbb{H}^{\psi,\gamma}_{q,p}(T)} \leq C \| f\|_{\mathbb{H}^{\psi,\gamma}_{q,p}(T)},
\end{equation}
where $C>0$ is a constant  depending only on $d,p,q,\gamma,\ell, \text{{\boldmath$\delta$}}$ and  $\text{{\boldmath$\kappa$}}_{4}$.
\end{theorem}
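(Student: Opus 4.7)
The plan is to follow the standard Calder\'on--Zygmund/Fefferman--Stein scheme for maximal $L_q(L_p)$-regularity of parabolic equations, adapted to our non-scalable setting. First I would reduce to the case $\gamma=0$: by Lemma \ref{basicproperty}(iii), $(1-\mathcal{L})^{\gamma/2}$ is an isometry from $\bH_{q,p}^{\psi,\gamma+2}(T)$ to $\bH_{q,p}^{\psi,2}(T)$ that commutes with both $\partial_{t}$ and $\mathcal{L}$, so $u$ solves \eqref{mainequation2} with datum $f$ iff $v:=(1-\mathcal{L})^{\gamma/2}u$ solves it with datum $(1-\mathcal{L})^{\gamma/2}f\in\bL_{q,p}(T)$. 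Modulo \eqref{eqn 03.25.15:03} which converts the $\mathcal{L}u$-bound into the full $\bH_{q,p}^{\psi,2}$-norm, the theorem thus reduces to proving the a priori estimate $\|\mathcal{L}u\|_{\bL_{q,p}(T)}\leq C\|f\|_{\bL_{q,p}(T)}$ on a dense class.

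For existence on smooth data, I would represent the candidate solution by the heat-kernel formula
$$
u(t,x)=\int_{0}^{t}\!\!\int_{\bR^{d}} p(t-s,x-y)\,f(s,y)\,dy\,ds,
$$
where $p(t,\cdot)$ is the transition density of the L\'evy process $X$, whose pointwise and derivative bounds are developed in Section \ref{sec3}. For $f\in C_{c}^{\infty}(\bR_{+}^{d+1})$ the function $u$ is a classical solution, so the whole problem is to establish the key a priori bound
$$
\|\mathcal{L}u\|_{\mathrm{BMO}(\bR^{d+1})}\leq C\,\|f\|_{L_{\infty}(\bR^{d+1})},
$$
where the BMO norm is taken over parabolic cubes $Q_{r}(t,x)=(t-h(r)^{-1},t)\times B_{r}(x)$ whose time-radius is governed by the function $h$. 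The core of the argument is the standard Krylov-type decomposition of $u$ into a near-field piece $u_{1}$ and a far-field piece $u_{2}$ relative to the cube $Q_{r}$: for $u_{1}$ one uses the $L_{2}$-boundedness of $\mathcal{L}$ on $L_{2}$-inputs supported near $Q_{r}$ (via the Fourier multiplier identity \eqref{eqn 06.28.11:33}) together with Chebyshev; for $u_{2}$, mean oscillation on $Q_{r}$ is controlled by pointwise differences of the kernel $\mathcal{L}_{x}p(t,x)$, which in turn are estimated using the heat-kernel derivative bounds from Section \ref{sec3}. The hypothesis $\bold{H}(d,4)$ enters here because obtaining the required H\"older regularity in space of $\mathcal{L}_{x}p(t,x)$ costs the differentiability of $j_{d}$ through the commutator relations noted in Remark \ref{rmk 09.27.14:11}(ii).

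From the BMO-$L_{\infty}$ estimate together with the trivial $L_{\infty}\to L_{\infty}$ bound for averaging on parabolic cubes, the Fefferman--Stein sharp-function theorem (for the quasi-metric space on $\bR^{d+1}$ induced by $(t,x)\mapsto (h^{-1}(|t|^{-1}),|x|)$) yields the diagonal $L_{p}$-estimate $\|\mathcal{L}u\|_{L_{p}(\bR^{d+1})}\leq C\|f\|_{L_{p}(\bR^{d+1})}$ for each $p\in(1,\infty)$. To pass to mixed norms $q\ne p$ I would invoke a vector-valued extrapolation theorem (in the Krylov / Rubio de Francia style): viewing $\mathcal{L}(\partial_{t}-\mathcal{L})^{-1}$ as a convolution-in-time operator with an $L_{p}$-bounded operator-valued kernel, the diagonal estimate upgrades to the full $L_{q}((0,T);L_{p})$-bound. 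Uniqueness in $\bH_{q,p,0}^{\psi,2}(T)$ follows directly from the a priori estimate, and existence for a general $f\in\bL_{q,p}(T)$ is obtained by approximating $f$ with $C_{c}^{\infty}(\bR_{+}^{d+1})$ (Lemma \ref{basicproperty}(ii)) and passing to the limit in $\bH_{q,p,0}^{\psi,2}(T)$.

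The main obstacle will be the BMO-$L_{\infty}$ step. Because $\delta_{1}$ in \eqref{H:s} is permitted to vanish, $\ell$ may be slowly varying at infinity and the heat kernel admits no single scaling identity; instead its size splits across the three subcases of Assumption \ref{ell_con}, and critically, the parabolic scale $h(r)$ used to define $Q_{r}$ is not in general comparable to $\ell(r^{-1})$ which controls the tail $t\ell(|x|^{-1})/|x|^{d}$ of $p(t,x)$. Consequently the mean-oscillation integrals must be split along the crossover scale $|x|\asymp h^{-1}(t^{-1})$ and handled separately; in case (ii)--(2), Assumption \eqref{eqn 08.12.17:59} (equivalently \eqref{con_ii}) is used precisely to absorb factors $h(r)\exp(-ah(r)/\ell(r^{-1}))$ that would otherwise grow. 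Carrying through these estimates together with the fourth-order derivative bounds on $p$ supplied by $\bold{H}(d,4)$ is the technical heart of the proof.
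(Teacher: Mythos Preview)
Your proposal is correct and follows essentially the paper's approach: reduce to $\gamma=0$ via Lemma~\ref{basicproperty}(iii), represent the solution by the heat-kernel formula, establish the BMO--$L_\infty$ bound for $f\mapsto\mathcal{L}u$ by decomposing the data relative to the parabolic cubes $Q_b$ (the paper's Lemmas~\ref{ininestimate}--\ref{outinspaceestimate} and Corollary~\ref{outwholeestimate}), interpolate with the $L_2$ endpoint (Lemma~\ref{22estimate}) via Fefferman--Stein and Marcinkiewicz to get the diagonal $L_p$ bound, and finally apply Krylov's operator-valued Calder\'on--Zygmund theorem for the mixed-norm extension. Two small points to tighten: the diagonal interpolation runs between $L_2$ and BMO (with duality for $p\in(1,2)$), not via an ``$L_\infty\to L_\infty$ averaging'' bound; and $\bold{H}(d,4)$ is used specifically to control $\partial_t q=\mathcal{L}^2 p_d$ in the time-oscillation lemma (Lemma~\ref{outwholetimeestimate}) and $D_x q=\mathcal{L}D_x p_d$ in the space-oscillation lemma (Lemma~\ref{outoutspaceestimate}), which is exactly what Theorem~\ref{thm 08.24.16:27} provides under that hypothesis.
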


Theorem \ref{main theorem2} and theory of additive processes deduce the following $L_{p}$-regularity of solution for equations with non-local operators having time-measurable coefficients.

\begin{theorem} \label{main theorem2-1}
Let $d \in \bN$, $p,q\in(1,\infty)$, $\gamma \in \bR$, and $T\in(0,\infty)$. Suppose  Assumption \ref{ass bernstein}, and Assumption \ref{ell_con} hold. Also, suppose that $j_{d}$ satisfies $\bold{H}(d,4)$. Let $a=a(t,y)$ be a measurable function satisfying
\begin{align}\label{eqn 09.05.16:00}
\int_{r<|y|<R}  y \, a(t,y) \, j_{d}(y) \, dy = 0 \quad \forall\, 0<r<R<\infty,
\end{align}
$$
0<a_{0}\leq a(t,y) \leq a_{1}<\infty \quad \forall (t,y) \in (0,\infty)\times \bR^{d},
$$
and define the operator $\mathcal{L}^{a}$ as
\begin{align}\label{eqn 03.15.14:09}
\mathcal{L}^{a}u(t,x) = \int_{\bR^{d}} \left( u(x+y) - u(x) -\nabla u(x) \cdot y \mathbf{1}_{|y|\leq1} \right) a(t,y)j_{d}(y)dy.
\end{align}
Then for any  $f\in \bH_{q,p}^{\psi,\gamma}(T)$, the equation
\begin{equation*}
\partial_{t}u = \mathcal{L}^{a}u + f,\quad t>0\,; \quad u(0,\cdot)=0
\end{equation*}
has a unique solution $u$ in the class $\bH_{q,p,0}^{\psi,\gamma+2}(T)$, and for the solution $u$ it holds that
\begin{equation*}
\|u\|_{\bH_{q,p}^{\psi,\gamma+2}(T)}\leq C \|f\|_{\bH_{q,p}^{\psi,\gamma}(T)},
\end{equation*}
where $C>0$ is a constant depending only on $d,p,q,\gamma,\ell,T, a_{0},a_{1}, \text{{\boldmath$\delta$}}$ and  $\text{{\boldmath$\kappa$}}_{4}$. Furthermore, we have
\begin{equation*}
\| \mathcal{L}^{a} u\|_{\mathbb{H}^{\psi,\gamma}_{q,p}(T)}+ \| \mathcal{L} u\|_{\mathbb{H}^{\psi,\gamma}_{q,p}(T)} \leq C \| f\|_{\mathbb{H}^{\psi,\gamma}_{q,p}(T)},
\end{equation*}
where $C>0$ is a constant  depending only on $d,p,q,\gamma,\ell, \text{{\boldmath$\delta$}},a_{0},a_{1}$ and  $\text{{\boldmath$\kappa$}}_{4}$.
\end{theorem}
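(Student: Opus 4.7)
The plan is to reduce Theorem \ref{main theorem2-1} to Theorem \ref{main theorem2} via the additive process generated by $(\mathcal{L}^{a(t,\cdot)})_{t \geq 0}$. For each fixed $t$, the measure $\nu_t(dy) := a(t,y) j_d(|y|)\,dy$ is a symmetric L\'evy measure pointwise comparable to $j_d$; the cancellation hypothesis \eqref{eqn 09.05.16:00} ensures, as in \eqref{eqn 09.28.11:26}, that the cutoff radius $r=1$ inside the definition of $\mathcal{L}^a$ may be replaced by any $r>0$ without altering the operator. Thus each slice $\mathcal{L}^{a(t,\cdot)}$ generates a pure-jump L\'evy process, and the family generates an inhomogeneous L\'evy (additive) process $X=(X_s)_{s\ge0}$ with transition density $p^a(s,t,\cdot)$ satisfying $\partial_t p^a(s,t,\cdot) = \mathcal{L}^{a(t,\cdot)} p^a(s,t,\cdot)$ and $p^a(s,s,\cdot)=\delta_0$. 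The candidate solution at $\gamma=0$ is
\begin{equation*}
u(t,x) := \int_0^t \bigl( p^a(s,t,\cdot) * f(s,\cdot) \bigr)(x)\, ds.
\end{equation*}

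The technical core is to establish pointwise, gradient, and $\mathcal{L}^{a(t,\cdot)}$-derivative bounds on $p^a(s,t,\cdot)$ that parallel those quoted from \cite{cho21heat,KP21} for the homogeneous kernel $p(t,\cdot)$. Because $a_0 \leq a(t,y) \leq a_1$, the time-averaged characteristic exponent $\Psi^a(s,t,\xi) := \int_s^t \psi^a(r,\xi)\,dr$ satisfies $a_0(t-s)\psi(\xi) \leq \Psi^a(s,t,\xi) \leq a_1(t-s)\psi(\xi)$, and the jumping kernel of $X$ is pointwise comparable to $j_d$. Assumptions \ref{ass bernstein}, \ref{ell_con} and $\bold{H}(d,4)$ then let the Fourier-analytic proofs of the heat kernel bounds run slice by slice with $\Psi^a$ replacing $(t-s)\psi$, yielding bounds on $p^a$ and its derivatives with the same shape as in the homogeneous case, constants now depending also on $a_0,a_1$.

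With these estimates in place, the Calder\'on--Zygmund / BMO-$L_\infty$ argument used for Theorem \ref{main theorem2} transfers essentially verbatim, because the parabolic cubes and the auxiliary quantities $K, L, h$ depend only on $j_d$ and not on $a$. This yields the $\gamma=0$ version of
$$\|\mathcal{L}^a u\|_{\bL_{q,p}(T)} + \|\mathcal{L}u\|_{\bL_{q,p}(T)} \leq C \|f\|_{\bL_{q,p}(T)};$$
the $\mathcal{L}u$ term comes from running the same argument with $\mathcal{L}$ in place of $\mathcal{L}^{a(t,\cdot)}$, which is legitimate since the kernel of $\mathcal{L}$ is $j_d$, comparable to $a(t,\cdot)j_d$. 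The general $\gamma$ is recovered by commuting $(1-\mathcal{L})^{\gamma/2}$ with $\mathcal{L}^{a(t,\cdot)}$ (both are Fourier multipliers in $x$ at each fixed $t$) via Lemma \ref{basicproperty}(iii) and \eqref{eqn 03.25.15:03}. Uniqueness follows by applying the a priori estimate to the difference of two solutions, and existence is obtained by the method of continuity along $a_\tau := \tau a + (1-\tau)$, whose uniform bounds $a_0\wedge 1 \leq a_\tau \leq a_1\vee 1$ and cancellation $\int y a_\tau(t,y) j_d(|y|)\,dy = 0$ are preserved, with Theorem \ref{main theorem2} as the base case $\tau=0$.

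The principal obstacle lies in the heat kernel analysis for the time-inhomogeneous $p^a$ in the delicate regime of Assumption \ref{ell_con}(ii)--(2), where $h$ and $\ell(r^{-1})$ need not be comparable and the expression for the homogeneous $p(t,x)$ is already intricate. Since $p^a$ is not a convolution semigroup, its spatial Fourier transform involves $\exp(-\Psi^a(s,t,\xi))$ rather than $\exp(-(t-s)\psi(\xi))$, so each lemma of the heat kernel section must be shown to depend on the characteristic exponent only through quantities that remain stable under time-averaging with weights in $[a_0,a_1]$. The pinching $a_0 \leq a \leq a_1$ is precisely what provides this robustness; once it is systematically exploited, the remaining work is a bookkeeping exercise parallel to the proof of Theorem \ref{main theorem2}.
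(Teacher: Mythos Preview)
Your approach differs substantially from the paper's. The paper's proof is a one-line citation: it invokes \cite[Theorem~2.8]{CKP23anisotropic} together with Theorem~\ref{main theorem2} as input, and the remark following the statement explains that the key technical ingredient there is the $L_p$-boundedness of the Fourier multiplier $m(t,\xi)=\psi^a(t,\xi)/\psi(\xi)$, which under the symmetry hypothesis \eqref{eqn 09.05.16:00} is precisely the symbol of $\mathcal{L}^a\mathcal{L}^{-1}$ and satisfies $a_0\le m(t,\xi)\le a_1$. Thus the passage from $\mathcal{L}$ to $\mathcal{L}^a$ is handled by a multiplier comparison plus the additive-process machinery already built in \cite{CKP23anisotropic}, rather than by reproving the heat kernel and Calder\'on--Zygmund analysis from scratch.

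Your plan---rederive the bounds of Section~\ref{sec3} for $p^a(s,t,\cdot)$ and rerun Section~\ref{sec 10.05.15:24}---is workable in principle and may well be close to what \cite{CKP23anisotropic} does internally, but it is much more labor than the present paper undertakes, and a couple of steps are undersold as ``bookkeeping.'' First, the dimensional recursion in Remark~\ref{rmk 07.27.10:03} (from \cite{kul2016gradient}) is stated for L\'evy processes; you would need to check it applies slice by slice to $p^a(s,t,\cdot)$ viewed as the density of an infinitely divisible law with L\'evy measure $\bigl(\int_s^t a(r,y)\,dr\bigr)j_d(|y|)\,dy$. Second, your treatment of $\|\mathcal{L}u\|$ requires bounds on the kernel $\mathcal{L}p^a(s,t,\cdot)$, which is \emph{not} $\partial_t p^a$, so the $L_2$-boundedness and the BMO lemmas need separate verification rather than a direct substitution of $\mathcal{L}$ for $\mathcal{L}^{a(t,\cdot)}$. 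Neither point is fatal, but both require more than comparability of $j_d$ and $a(t,\cdot)j_d$; the paper's multiplier route sidesteps exactly these issues.
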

\begin{proof}
We can prove the theorem by directly following the proof of \cite[Theorem 2.8]{CKP23anisotropic} with Theorem \ref{main theorem2}.
\end{proof}

\begin{remark}
Here we remark that the proof of \cite[Theorem 2.8]{CKP23anisotropic} requires the symmetry of the coefficient \eqref{eqn 09.05.16:00} since the approach is based on the $L_{p}$-boundedness of the Fourier multiplier
\begin{align*}
m(t,\xi):=\frac{\int_{\bR^{d}} \left( 1-\cos{(y\cdot \xi)} \right) a(t,y) j_{d}(y)dy}{\int_{\bR^{d}} \left( 1-\cos{(y\cdot \xi)} \right) j_{d}(y)dy}
\end{align*}
which becomes the Fourier multiplier of the operator $\mathcal{L}^{a} \mathcal{L}^{-1}$ under \eqref{eqn 09.05.16:00}. 
\end{remark}

\begin{remark}
Suppose that $\mathcal{L}=\mathcal{L}_{Y}$ is an operator with the Fourier multiplier $\xi\mapsto\phi(|\xi|^{2})$, where $\phi$ is a Bernstein function. (Recall Example \ref{exm 04.04.17:27}.) Then, due to \cite{kim2019L,KW04}, the inequalities \eqref{mainestimate2} and \eqref{eqn 05.27.14:12} in Theorem \ref{main theorem2} associated with $\mathcal{L}_{Y}$ hold without any assumptions. By using these results, when $a(t,y)$ is non-trivial, then Theorem \ref{main theorem2-1} corresponding to $\mathcal{L}_{Y}^a$ can be obtained similarly. We emphasize that in Theorem \ref{main theorem2}, our non-local operator $\mathcal{L}$ can have the Fourier multiplier $\psi$, which is neither a smooth function nor a function with scaling property.
\end{remark}

\medskip

\mysection{Estimates of the heat kernels and their derivatives}\label{sec3}

In this section, we obtain  sharp bounds of  the heat kernel and its derivative for the equation 
\begin{equation*}
\partial_t u = \mathcal{L}u
\end{equation*} 
under the Assumption \ref{ass bernstein}, and Assumption \ref{ell_con}.

Let $p(t,x)=p_d(t,x)$ be the transition density of $X_t$. Then it is well-known that for any $t>0, x\in \bR^d$,
\begin{equation}\label{eqn 7.20.1}
p_d(t,x)=\frac{1}{(2\pi)^d} \int_{\bR^d} e^{i \xi \cdot x} e^{-t\psi(|\xi|)} \,d\xi.
\end{equation}
Since $X_{t}$ is isotropic, $p_{d}(t,x)$ is rotationally invariant in $x$ (i.e. $p_{d}(t,x)=p_{d}(t,|x|)$). We put $p_{d}(t,r):=p_{d}(t,x)$  if $r=|x|$ for notational convenience. Since $X_{t}$ is unimodal, $r\mapsto p(t,r)$ is a decreasing function. Moreover, $p(t,x)\le p(t,0)\in(0,\infty]$ for $t>0$ and $x\in\R^d$.

The following lemma gives off a diagonal type upper bound for the heat kernel. The result holds for all isotropic unimodal L\'evy processes. 

\begin{proposition}[{\cite[Theorem 5.4]{GRT19}}]\label{p:uhk_jump}
For any $(t,x)\in (0,\infty)\times(\bR^{d}\setminus\{0\})$, we have
\begin{equation*}
p_{d}(t,x) \leq C t|x|^{-d}K(|x|),
\end{equation*}
where the constant $C>0$ depends only on $\kappa_{2,0},d$.
\end{proposition}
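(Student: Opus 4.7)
The plan is to exploit the relation $K(r) \asymp r^d j_d(r)$ from \eqref{Kellcomp}, which reduces the desired bound to the off-diagonal estimate $p_d(t,x) \leq C t j_d(|x|)$. I would prove this via a Meyer-type decomposition of $X_t$ combined with its isotropic unimodal structure. Fix $r = |x|$ and split $\nu(dy) := j_d(|y|)\,dy$ as $\nu = \nu^{(r)} + \tilde{\nu}^{(r)}$, where $\nu^{(r)} := \mathbf{1}_{|y| \leq r/2}\, \nu(dy)$. Let $X_t = Y_t + Z_t$ be the corresponding independent decomposition, with $Y$ carrying the small jumps (absorbing the drift compensator so that $Y$ stays isotropic) and $Z$ compound Poisson of intensity $\lambda_r := L(r/2)$ and jump law $\tilde{\nu}^{(r)}/\lambda_r$.

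Then $p_d(t,x) = \bE\bigl[p_Y(t, x - Z_t)\bigr]$, which I would split according to the number of jumps of $Z$ on $[0,t]$. The multi-jump contribution factors out one copy of $\tilde{\nu}^{(r)}$, whose density is bounded by $j_d(r/2)$ since $r \mapsto j_d(r)$ is decreasing; by \eqref{Kellcomp} together with the doubling-type inequality \eqref{eqn 08.23.11:50} this is comparable to $j_d(r)$. Integrating the remaining convolution (of total mass $\lambda_r^{n-1}$) against the probability density $p_Y(t,\cdot)$ and summing the Poisson series collapses the $n \geq 1$ part to at most $C t j_d(r)$. The zero-jump contribution is $e^{-\lambda_r t} p_Y(t,x)$, so the whole argument hinges on estimating this single term.

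The main obstacle is therefore controlling $p_Y(t,x)$ when $|x|=r$ and $Y$ has all jumps confined to $\{|y| \leq r/2\}$. Because $Y$ remains isotropic unimodal (the truncation preserves unimodality by Watanabe's theorem), $p_Y(t,\cdot)$ is radially decreasing, and a Chebyshev-type averaging gives $p_Y(t,x) \cdot |B_{r/4}(x)| \leq \bP(|Y_t| \geq 3r/4)$. The needed tail bound $\bP(|Y_t| \geq 3r/4) \leq C t K(r)$ is obtained by a Pruitt-type calculation: applying Dynkin's formula with a test function $f(y) = 1 \wedge (4|y|/(3r))^2$ to the generator of $Y$ produces an upper bound proportional to $t \cdot r^{-2} \int_{|y| \leq r/2} |y|^2 j_d(|y|)\, dy \asymp K(r)$. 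Combining this with $K(r) \asymp r^d j_d(r)$ closes the zero-jump case. The delicate point throughout is the preservation of unimodality under truncation, without which the radial-decreasing bound $p_Y(t,x) \leq p_Y(t, 3r/4)$ would fail and the Chebyshev step would break down.
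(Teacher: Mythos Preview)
The paper does not prove this proposition; it quotes \cite[Theorem~5.4]{GRT19} as a black box valid for all isotropic unimodal L\'evy processes. Your Meyer-type decomposition (small-jump process $Y$ plus an independent compound Poisson piece $Z$, then condition on the number of jumps of $Z$) is precisely the standard mechanism behind such off-diagonal bounds and is essentially how the cited reference proceeds, so the overall strategy is sound.

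Two small repairs are needed. First, the averaging step as you wrote it is not quite right: for $y\in B_{r/4}(x)$ one has $|y|\in[3r/4,5r/4]$, and radial monotonicity of $p_Y(t,\cdot)$ does \emph{not} give $p_Y(t,y)\ge p_Y(t,x)$ on the part of that ball where $|y|>r$. Shift the center to $\tfrac{3r}{4}\,x/|x|$, or simply use the annulus $\{r/2\le|y|\le r\}$; either region has volume comparable to $r^d$, is contained in $\{|y|\ge r/2\}$, and satisfies $p_Y(t,y)\ge p_Y(t,x)$ throughout, which is what the argument actually needs. Second, invoking \eqref{Kellcomp} to pass from $j_d(r/2)$ to $r^{-d}K(r)$ imports the scaling exponents $\text{{\boldmath$\delta$}}$, whereas the stated constant depends only on $\kappa_{2,0}$ and $d$. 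You can avoid this: since $j_d$ is decreasing,
\[
j_d(r/2)\;\le\; C(d)\,(r/2)^{-d}\cdot(r/2)^{-2}\!\int_{|y|\le r/2}|y|^2 j_d(|y|)\,dy\;\le\; C(d)\,\kappa_{2,0}\,r^{-d}K(r),
\]
the last step using only $j_d(\rho)\le\kappa_{2,0}\,\rho^{-d}\ell(\rho^{-1})$ together with \eqref{eqn 08.23.11:50}. With these two fixes your argument delivers the bound with the claimed dependence of the constant.
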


Let $\bar{\ell}(r) := \sup_{s\leq r} \ell(s)$. Then $\bar{\ell}$ is an increasing continuous function. Using Assumption \ref{ass bernstein}, we can construct a strictly increasing continuous function $\ell^{\ast}$ such that $\bar{\ell}(r) \asymp \ell^{\ast}(r)$ for all $r>0$ (see \cite[Lemma A.1]{KP21}).  Therefore, if $\ell$ satisfies Assumption \ref{ell_con} (ii), then there exists $C_{3}$ such that $\ell^{\ast}(r) \leq C_{3} \ell(r)$ for all $r>0$. In this case, we also denote $\ell^{-1}$ the inverse function of $\ell^{\ast}$.

The following lemma is an extension of \cite[Lemma 2.7]{cho21heat} in the sense that we can choose any $a\geq a_{d}$ in the lemma. Note that such freedom of choice is crucial for estimations of derivatives of heat kernel.

\begin{lemma}\label{lem 08.04.17:37}
Suppose $\ell$ satisfies  Assumption \ref{ell_con} (ii) and let $a_{d}:=2dC_{3}/C_{0}$, where $C_{0} \in(0,1)$ comes from \eqref{eqn 05.27.15:40}.  Then for any $a\geq a_{d}$, and $t\leq a/\ell^{\ast}(3)$, we have
$$
p_{d}(t,x) \leq p_{d}(t,0) \leq C [\ell^{-1}(a/t)]^{d} \exp{\left(-\frac{C_{0}}{4c_{1}a}th((\ell^{-1}(a/t))^{-1})\right)}
$$
where the constant $C$ depends only on $d,a$, and  $c_{1}>0$ comes from \eqref{eqn 08.31.14:07}.
\end{lemma}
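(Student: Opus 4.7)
My plan is to start from the Fourier representation of the heat kernel at the origin,
\[
p_{d}(t,0)=(2\pi)^{-d}\int_{\bR^{d}} e^{-t\psi(|\xi|)}\,d\xi,
\]
obtained from \eqref{eqn 7.20.1} at $x=0$, together with the unimodality bound $p_{d}(t,x)\le p_{d}(t,0)$. Setting $R:=\ell^{-1}(a/t)$, the hypothesis $t\le a/\ell^{\ast}(3)$ and strict monotonicity of $\ell^{\ast}$ give $R\ge 3$, in particular $R^{-1}\le 1$, so the small-argument relations \eqref{eqn 08.31.14:07} and \eqref{eqn 09.21.13:27} are in force.

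The plan is to split the Fourier integral at $|\xi|=R$. On the outer piece I would use \eqref{eqn 05.27.15:40} and the monotonicity of $h$ together with the factorization
\[
e^{-t\psi(|\xi|)}=e^{-\lambda t\psi(|\xi|)}\cdot e^{-(1-\lambda)t\psi(|\xi|)},\qquad \lambda:=\frac{1}{2c_{1}a},
\]
to extract the factor $e^{-\lambda C_{0}\,th(R^{-1})}$ (since $h(|\xi|^{-1})\ge h(R^{-1})$ on $|\xi|>R$). The remaining integral $\int_{|\xi|>R}e^{-(1-\lambda)t\psi(|\xi|)}\,d\xi$ must then be shown to be $O(R^{d})$; after rescaling $\xi=R\eta$ and passing to polar coordinates this reduces to bounding $\int_{1}^{\infty}e^{-(1-\lambda)t\psi(Ru)}u^{d-1}\,du$ by a constant. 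This bound follows from the lower scaling \eqref{H:s} combined with $t\ell(R)\asymp a$: when $\delta_{1}>0$, $\psi$ grows like a power of $u$ and integrability is automatic; when $\delta_{1}=0$ one invokes Assumption \ref{ell_con}(ii), either directly via (ii)-(1) or through the equivalent reformulation \eqref{con_ii} of (ii)-(2) (Remark \ref{rmk 09.15.18:04}(ii)), and it is precisely the threshold $a\ge a_{d}=2dC_{3}/C_{0}$ that forces the polynomial/stretched-exponential decay of $e^{-t\psi(Ru)}$ to beat the $u^{d-1}$ weight.

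On the inner piece $|\xi|\le R$ the trivial bound $e^{-t\psi(|\xi|)}\le 1$ yields only $CR^{d}$ without the exponential factor, so one must do better. I would perform a dyadic decomposition into annuli $|\xi|\in[R/2^{k+1},R/2^{k}]$ for $k\ge 0$; on each annulus $\psi(|\xi|)\ge C_{0}h((R/2^{k})^{-1})$ yields a contribution of order $(R/2^{k})^{d}e^{-c\,th((R/2^{k})^{-1})}$, and the scaling \eqref{eqn 05.12:15:07} of $h$ together with the lower scaling \eqref{H:s} makes the resulting sum geometric, producing a total inner contribution of the form $CR^{d}e^{-c\,th(R^{-1})}$. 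Combining the inner and outer bounds and adjusting constants ($\lambda C_{0}/2=C_{0}/(4c_{1}a)$) then yields the stated estimate.

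I expect the main obstacle to lie in the slowly varying regime $\delta_{1}=0$, where both the tail integral after rescaling and the dyadic sum over inner annuli converge only marginally. In that regime the threshold $a\ge a_{d}$ is essential, playing the role of forcing just enough polynomial decay in $\psi$ to absorb the $u^{d-1}$ (respectively $2^{kd}$) weights via Assumption \ref{ell_con}(ii). The freedom to take any $a\ge a_{d}$, as opposed to the fixed value considered in \cite[Lemma 2.7]{cho21heat}, is precisely what will be needed later in the paper to obtain estimates of derivatives of the heat kernel, where one must trade off the polynomial prefactor $R^{d}$ against the exponential factor at different scales of $a$.
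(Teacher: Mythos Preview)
Your overall framework---the Fourier representation of $p_{d}(t,0)$, unimodality, and a split at $R=\ell^{-1}(a/t)$---is the right starting point and is what the paper defers to in \cite[Lemma~2.7]{cho21heat}. Your outer-piece mechanism is also essentially correct: for $\rho\ge R$ one has $\ell(\rho)\ge C_{3}^{-1}\ell^{\ast}(\rho)\ge C_{3}^{-1}\ell^{\ast}(R)=a/(C_{3}t)$, so $C_{0}t\ell(\rho)\ge C_{0}a/C_{3}\ge 2d$ by the threshold $a\ge a_{d}$, and then the identity
\[
\frac{d}{d\rho}\Bigl(\rho^{d}e^{-C_{0}tL(\rho^{-1})}\Bigr)=\rho^{d-1}e^{-C_{0}tL(\rho^{-1})}\bigl(d-C_{0}t\ell(\rho)\bigr)
\]
gives $\int_{R}^{\infty}e^{-C_{0}tL(\rho^{-1})}\rho^{d-1}\,d\rho\le d^{-1}R^{d}e^{-C_{0}tL(R^{-1})}$, from which the stated exponential follows via $L(R^{-1})\ge h(R^{-1})/c_{1}$. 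Note that only the relation $\ell\asymp\ell^{\ast}$ from the opening clause of Assumption~\ref{ell_con}(ii) is used here; your invocation of the subcases (ii)--(1) or (ii)--(2) for the \emph{tail} integral is misplaced.

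The genuine gap is in the inner piece $|\xi|\le R$. On the $k$-th dyadic annulus your bound reads $(R/2^{k})^{d}\exp\bigl(-c\,th(2^{k}/R)\bigr)$, but $h$ is strictly decreasing, so $h(2^{k}/R)<h(R^{-1})$ and the exponential factor \emph{increases} toward $1$ as $k\to\infty$; the scaling \eqref{eqn 05.12:15:07} you cite only gives $h(2^{k}/R)\ge c'4^{-k}h(R^{-1})$, which makes matters worse for large $k$. The dyadic sum is therefore controlled only by $R^{d}\sum_{k}2^{-kd}=CR^{d}$, not by $CR^{d}e^{-c\,th(R^{-1})}$. Under (ii)--(1) this is harmless, since then $th(R^{-1})=ah(R^{-1})/\ell^{\ast}(R)\asymp a$ is bounded and the missing factor can be absorbed into $C=C(d,a)$. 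Under (ii)--(2), however, $th(R^{-1})$ is unbounded---for $\ell(r)=\log(1+r^{\alpha})$ one has $th(R^{-1})\asymp a\log R$---so the target factor $\exp\bigl(-C_{0}th(R^{-1})/(4c_{1}a)\bigr)$ is of order $R^{-\varepsilon}$ for some $\varepsilon>0$, and your inner bound $CR^{d}$ exceeds the required $CR^{d-\varepsilon}$ by a genuine power of $R$. The reformulation \eqref{con_ii} of (ii)--(2) that you propose to invoke does not close this gap as stated: it bounds $h(R^{-1})e^{-a'h(R^{-1})/\ell(R)}$, which is not the quantity you need to control. A sharper estimate of $\int_{0}^{R}e^{-t\psi(\rho)}\rho^{d-1}\,d\rho$---one that actually captures the concentration of the integrand well inside $[0,R]$---is required, and your sketch does not supply it.
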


\begin{proof}
We can obtain the desired result by following the proof of \cite[Lemma 2.7]{cho21heat}.
\end{proof}

For any $a>0$ and $r,t>0$, we define
\begin{align}\label{d:theta}
\theta(a,r,t)=\theta_a(r,t):=r\vee(\ell^{-1}(a/t))^{-1}.
\end{align}

\begin{proposition}\label{p:hku}
Suppose $\ell$ satisfies the condition in Assumption \ref{ell_con} (ii) and $\theta$ is given by \eqref{d:theta}. Also, take constant $a_{d}$ from Lemma \ref{lem 08.04.17:37}. Then
for any $a\geq a_{d}$ and $\varepsilon>0$, the following holds: for any $T>0$, there exist $c_{2},b_{0}>0$ depending only on $\kappa_{1,0},\kappa_{2,0},d,a$ and $T$ such that
\begin{align*}
c_{2}^{-1}t\frac{K(\theta_{\varepsilon}(|x|,t))}{[\theta_{\varepsilon}(|x|,t)]^d}\exp{\left(-b_{0}th(\theta_{\varepsilon}(|x|,t)) \right)}  
 \leq  p_{d}(t,x)  
 \leq c_{2}t\frac{K(\theta_{a}(|x|,t))}{[\theta_{a}(|x|,t)]^d}\exp{\left(-\frac{C_{0}}{4c_{1}a}th(\theta_{a}(|x|,t)) \right)} 
\end{align*}
holds for all $(t,x)\in (0,T]\times \bR^{d}$. 
\end{proposition}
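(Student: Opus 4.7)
The plan is to prove the upper and lower estimates separately, in each case splitting according to whether $|x|\le(\ell^{-1}(a/t))^{-1}$ (the ``on-diagonal'' regime, with $\theta_a(|x|,t)=(\ell^{-1}(a/t))^{-1}$) or $|x|>(\ell^{-1}(a/t))^{-1}$ (the ``off-diagonal'' regime, with $\theta_a(|x|,t)=|x|$); the same dichotomy applies to the lower bound with $\varepsilon$ in place of $a$.

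For the upper bound on-diagonal, I would use radial monotonicity $p_d(t,x)\le p_d(t,0)$ together with Lemma \ref{lem 08.04.17:37} when $t\le a/\ell^{*}(3)$. The algebraic step is to rewrite $[\ell^{-1}(a/t)]^d=\theta_a^{-d}$ as $a^{-1}tK(\theta_a)/\theta_a^d$ using \eqref{Kellcomp} and $\ell(\ell^{-1}(a/t))\asymp a/t$, then absorb $a^{-1}$ into the constant; for $t\in(a/\ell^{*}(3),T]$ every scale is bounded above and below and $p_d(t,0)$ is uniformly controlled, so the bound is routine. For the upper bound off-diagonal, I would start from Proposition \ref{p:uhk_jump}, $p_d(t,x)\le CtK(|x|)/|x|^d$, and incorporate the exponential factor $\exp(-\tfrac{C_0}{4c_1a}th(|x|))$. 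When $|x|\ge 1$, \eqref{eqn 09.21.13:27} gives $h(|x|)\asymp\ell(|x|^{-1})$, and the regime condition $\ell^{*}(|x|^{-1})\le a/t$ yields $th(|x|)\lesssim a$, so the exponential is bounded below by a positive constant and can be inserted for free; the case $|x|<1$ under Assumption \ref{ell_con}(ii)-(1) is identical since $h(r)\asymp\ell(r^{-1})$ for $r\le 1$ there as well.

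For the lower bound on-diagonal I would use the Fourier inversion \eqref{eqn 7.20.1} restricted to $|\xi|\le c\,\ell^{-1}(\varepsilon/t)$ for a small $c$: on this set \eqref{eqn 05.27.15:40} gives $t\psi(|\xi|)\lesssim\varepsilon$, so $e^{-t\psi(|\xi|)}$ is bounded below, and $|x||\xi|\le c$ gives $\cos(\xi\cdot x)\ge 1/2$, yielding $p_d(t,x)\gtrsim[\ell^{-1}(\varepsilon/t)]^d$; since $tK(\theta_\varepsilon)/\theta_\varepsilon^d\asymp\varepsilon[\ell^{-1}(\varepsilon/t)]^d$ and the exponential factor is at most $1$, the claimed bound follows by enlarging $c_2$. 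For the lower bound off-diagonal I would use a one-jump argument via the L\'evy--It\^o/Meyer decomposition: the dominant contribution to $p_d(t,x)$ comes from the process making a single large jump landing near $x$, giving $p_d(t,x)\gtrsim tj_d(|x|)\asymp tK(|x|)/|x|^d$ by \eqref{Kellcomp}; again $\exp(-b_0th(|x|))\le 1$ is absorbed into $c_2$.

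The principal obstacle is the upper bound off-diagonal when $|x|<1$ under Assumption \ref{ell_con}(ii)-(2): here $h/\ell$ can be unbounded (Example \ref{exm 10.05.14:52}(iii) with $\ell(r)=\log(1+r^\alpha)$ gives $L(r)\asymp(\log r^{-1})^2$ while $\ell(r^{-1})\asymp\log r^{-1}$), so $th(|x|)$ is not bounded and Proposition \ref{p:uhk_jump} alone does not yield the claimed exponential factor. Condition \eqref{eqn 08.12.17:59} (equivalently the reformulation in Remark \ref{rmk 09.15.18:04}) was designed precisely for this regime, and combining it carefully with the on-diagonal bound of Lemma \ref{lem 08.04.17:37} in order to interpolate against Proposition \ref{p:uhk_jump} is where the most delicate calculation lies.
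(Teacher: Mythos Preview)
Your upper-bound strategy matches the paper's: the on-diagonal case via Lemma~\ref{lem 08.04.17:37} and the delicate off-diagonal case under Assumption~\ref{ell_con}(ii)--(2) are exactly what the paper handles by referring to \cite[Proposition~2.9, Corollary~2.10]{cho21heat} (with Lemma~\ref{lem 08.04.17:37} supplying the freedom in $a$). Your identification of the obstacle there is accurate, and the paper does not spell out the interpolation either.

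Your lower-bound argument, however, has a genuine gap in the on-diagonal case under (ii)--(2). You claim that on $\{|\xi|\le c\,\ell^{-1}(\varepsilon/t)\}$ one has $t\psi(|\xi|)\lesssim\varepsilon$ via \eqref{eqn 05.27.15:40}, but that inequality gives $\psi(|\xi|)\asymp h(|\xi|^{-1})$, and $h(r)\asymp\ell(r^{-1})$ is precisely what \emph{fails} in case (ii)--(2). With $\ell(r)=\log(1+r^{\alpha})$ (Example~\ref{exm 10.05.14:52}(iii)) one has $h((\ell^{-1}(\varepsilon/t))^{-1})\asymp(\varepsilon/t)^{2}$, so $th((\ell^{-1}(\varepsilon/t))^{-1})\asymp\varepsilon^{2}/t\to\infty$ as $t\downarrow0$; thus $e^{-t\psi(|\xi|)}$ is \emph{not} bounded below on the region you integrate over, and the Fourier argument as stated does not produce the claimed lower bound $p_d(t,x)\gtrsim[\ell^{-1}(\varepsilon/t)]^{d}$. (Your off-diagonal one-jump sketch also omits the exponential factor $\exp(-b_0th(|x|))$, which is present in the rigorous version and cannot in general be dropped.)

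The paper avoids all of this with a one-line argument that needs no case splitting: \cite[Proposition~5.3]{GRT19} together with \eqref{Kellcomp} gives $p_d(t,r)\ge c\,t\,K(r)r^{-d}\exp(-b_0th(r))$ for every $r>0$, and then unimodality and $\theta_{\varepsilon}(|x|,t)\ge|x|$ yield
\[
p_d(t,x)\ge p_d\bigl(t,\theta_{\varepsilon}(|x|,t)\bigr)\ge c\,t\,\frac{K(\theta_{\varepsilon}(|x|,t))}{[\theta_{\varepsilon}(|x|,t)]^{d}}\exp\bigl(-b_0\,t\,h(\theta_{\varepsilon}(|x|,t))\bigr).
\]
The exponential factor is carried from the start rather than inserted afterward, so the non-comparability of $h$ and $\ell$ never enters.
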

\begin{proof}
Using \cite[Proposition 5.3]{GRT19} and \eqref{Kellcomp} we have
\begin{align*}
c_{2} t\frac{K(|x|)}{|x|^{d}} \exp{\left( -b_{0} t h(|x|) \right)}\leq c t j_{d}(|x|)\exp{\left( -b_{0} t h(|x|) \right)} \leq  p_{d}(t,x).
\end{align*}
Therefore, since $\theta_{\varepsilon}(|x|,t) \geq |x|$ for any $\varepsilon,x,t$, we have
\begin{align*}\label{eqn 09.26.17:24}
 c_{2}^{-1} t\frac{K(\theta_{\varepsilon}(|x|,t))}{[\theta_{\varepsilon}(|x|,t)]^d}\exp{\left(-b_{0}th(\theta_{\varepsilon}(|x|,t)) \right)} 
\leq p_{d}(t,\theta_{\varepsilon}(|x|,t)) \leq p_{d}(t,x),
\end{align*}
and this certainly shows the lower estimation. For the upper estimation, follow the argument in  \cite[Proposition 2.9, Corollary 2.10]{cho21heat} with Lemma \ref{lem 08.04.17:37}.

\end{proof}

The following lemma gives an upper bound of $p_{d}(t,x)$ for sufficiently large $t>0$. For the proof, see \cite[Lemma 3.2]{KP21}.

\begin{lemma}\label{l:hke_largetime}
There exist $t_{1}=t_{1}(d,\kappa_{1,0},\kappa_{2,0},\ell,\text{{\boldmath$\delta$}})>0$ and $C>0$ depending only on $t_{1}$ such that for all $t\geq t_{1}$ and $x\in\bR^{d}$,
\begin{align*}
p_d(t,x)\le C\left( (h^{-1}(t^{-1}))^{-d}\wedge t\frac{K(|x|)}{|x|^d}\right).
\end{align*}
\end{lemma}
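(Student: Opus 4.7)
The plan is to prove the two terms of the minimum separately. The off-diagonal piece $p_d(t,x) \leq C\, t K(|x|)/|x|^d$ (for $x \neq 0$) is immediate from Proposition \ref{p:uhk_jump}, whose constant depends only on $d$ and $\kappa_{2,0}$ and which imposes no restriction on $t$. Hence the substantive task is to show the on-diagonal bound $p_d(t,0) \leq C(h^{-1}(t^{-1}))^{-d}$ for $t \geq t_1$, since isotropic unimodality then yields $p_d(t,x) \leq p_d(t,0)$ automatically.

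For the on-diagonal bound I would begin from Fourier inversion,
\[
p_d(t,0) \;\leq\; \frac{1}{(2\pi)^d}\int_{\mathbb{R}^d} e^{-t\psi(|\xi|)}\,d\xi,
\]
set the critical scale $R = (h^{-1}(t^{-1}))^{-1}$, and observe via \eqref{eqn 05.27.15:40} that $t\psi(R) \asymp 1$. Choosing $t_1$ large enough forces $R \leq 1$ whenever $t \geq t_1$, and then one splits the integral at $|\xi|=R$. The low-frequency contribution is trivially bounded by the volume $c_d R^d = c_d(h^{-1}(t^{-1}))^{-d}$, which is already the target. For $R < |\xi| \leq 1$, \eqref{eqn 09.21.13:27} gives $\psi(|\xi|) \asymp \ell(|\xi|)$, the lower scaling \eqref{H:l} produces $\psi(|\xi|)/\psi(R) \gtrsim (|\xi|/R)^{\delta_3}$, and the rescaling $\xi = R\eta$ converts this piece into $R^d \int_{|\eta|\geq 1} e^{-c|\eta|^{\delta_3}}\,d\eta \lesssim R^d$.

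The main obstacle is controlling the far-field integral $\int_{|\xi|>1} e^{-t\psi(|\xi|)}\,d\xi$, since \eqref{H:s} only yields $\psi(|\xi|) \gtrsim \ell(1)|\xi|^{\delta_1}$ and $\delta_1$ may vanish, i.e.\ $\ell$ may grow arbitrarily slowly at infinity. My strategy is to split $e^{-t\psi(|\xi|)} = e^{-t_1\psi(|\xi|)/2}\cdot e^{-(t-t_1/2)\psi(|\xi|)}$ and enlarge $t_1$ (in dependence on $\ell$ and $\text{{\boldmath$\delta$}}$) so that the first factor renders the integral convergent while the second is uniformly controlled using $\psi(|\xi|) \geq \psi(1) > 0$. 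In the delicate regime of Assumption \ref{ell_con} (ii)--(2) the required quantitative control is encoded in the equivalent exponential condition \eqref{con_ii}, which precisely bounds $\int_1^r \ell(s)/s\,ds$ against $\ell(r)$ and therefore pins down the admissible value of $t_1$. Assembling the low-frequency, intermediate, and far-field bounds gives the claimed on-diagonal estimate, and combining with Proposition \ref{p:uhk_jump} completes the proof.
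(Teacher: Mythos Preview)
Your outline is correct and matches the paper's approach (the paper defers to \cite[Lemma~3.2]{KP21}, but the same Fourier-inversion and scale-splitting argument is written out in the proof of Theorem~\ref{thm 09.13.17:01}). Two corrections, however. First, replace $\psi(|\xi|)\geq\psi(1)$ by $\psi(|\xi|)\geq C_{0}h(|\xi|^{-1})\geq C_{0}h(1)$ via \eqref{eqn 05.27.15:40}, since monotonicity of $\psi$ itself is nowhere asserted. Second, and more importantly, your appeal to Assumption~\ref{ell_con}~(ii)--(2) and \eqref{con_ii} is misplaced: the lemma does not assume Assumption~\ref{ell_con} (note the listed dependencies $d,\kappa_{1,0},\kappa_{2,0},\ell,\text{{\boldmath$\delta$}}$), and none is needed for the far-field piece. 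For $|\xi|\geq1$, combining \eqref{eqn 05.27.15:40} with \eqref{ll1} (a consequence of \eqref{H:s} alone) gives
\[
\psi(|\xi|)\;\geq\;C_{0}L(|\xi|^{-1})\;\geq\;C_{0}\int_{|\xi|^{-1}}^{1}\frac{\ell(s^{-1})}{s}\,ds\;\geq\;c\log|\xi|,
\]
so $\int_{|\xi|>1}e^{-t_{1}\psi(|\xi|)/2}\,d\xi<\infty$ as soon as $c\,t_{1}/2>d$. The residual factor $e^{-c'(t-t_{1}/2)}$ is then dominated by $(h^{-1}(t^{-1}))^{-d}$ because $h^{-1}(t^{-1})$ grows at most polynomially in $t$ (from \eqref{H:l} via \eqref{eqn 09.21.13:27}); this is precisely the comparison carried out around \eqref{eqn 09.23.12:34}.
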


\begin{remark}\label{rmk 07.27.10:03}
If $j_{d}$ satisfies $\bold{H}(d,1)$, then using \cite[Theorem 1.5]{kul2016gradient}, we can obtain $(d+2)$-dimensional isotropic unimodal L\'evy process $\tilde{X}_{t}$ with the same characteristic exponent $\psi(|\xi|)=\psi_X(|\xi|)$, whose transition density $p_{d+2}(t,x)=p_{d+2}(t,|x|)$ is radial,  radially decreasing in $x$ and satisfies
\begin{equation*}
p_{d+2}(t,r) = -\frac{1}{2\pi r} \frac{d}{dr}p_{d}(t,r) \quad\text{for}\;\; r>0.
\end{equation*}
This implies that for $t>0$ and $x\in\R^d$,
\begin{equation*}
|D_x p_{d}(t,x)|  \leq 2\pi |x| p_{d+2}(t, |x|).
\end{equation*}
Also, by inspecting the proof of \cite[Theorem 1.5]{kul2016gradient}, we can also find that the L\'evy density $\tilde{j}_{d+2}$ of $\tilde{X}_{t}$ is given by
\begin{align*}
\tilde{j}_{d+2}(r) = \frac{1}{2\pi }\mathcal{T}j_{d}(r):= -\frac{1}{2\pi } \frac{1}{r} \frac{d}{dr} j_{d}(r) \quad\text{for}\;\; r>0.
\end{align*}
If $j_{d}$ satisfies $\bold{H}(d,2)$, then using Remark \ref{rmk 09.27.14:11} (ii) and Assumption \ref{ass bernstein} (i) and  \cite[Theorem 1.5]{kul2016gradient} again, we can deduce that $p_{d+2}(t,x)$ is also differentiable in $x$, and we have
$$
|D^{2}_{x}p_{d}(t,x)| \leq C \left( p_{d+2}(t,|x|) + |x|^{2}p_{d+4}(t,|x|) \right),
$$
where $p_{d+4}$ is a heat kernel for $(d+4)$-dimensional isotropic L\'evy process with characteristic exponent $\psi$ and L\'evy density $\tilde{j}_{d+4}=(4\pi)^{-2}\mathcal{T}^{2}j_{d}$. Continuing, for any $m\in \bN$, if $j_{d}$ satisfies $\bold{H}(d,m)$, we have
\begin{equation}\label{eqn 08.04.18:41}
|D^{m}_{x}p_{d}(t,x)| \leq C \sum_{m-2k \geq 0,k\in \bN_{0}} |x|^{m-2k}p_{d+2(m-k)}(t,|x|).
\end{equation}
\end{remark}

Using the above remark, we have the following theorem for estimates of $D^{m}_{x}p_{d}(t,x)$.

\begin{theorem}\label{thm 07.25.16:53}
Let $m\in \bN_{0}$, and suppose that $j_{d}$ satisfies $\bold{H}(d,m)$. Then we have the following;

(i) Suppose $\ell$ satisfies the condition in Assumption \ref{ell_con} (ii) and $\theta$ is given by \eqref{d:theta}.  
Then, there exists constant $a_{d,m}>0$ such that the following holds: for any $a\geq a_{d,m}$, and $T>0$, there exists $C$ depending only on $\text{{\boldmath$\kappa$}}_{m},d,m$ and $T$ such that
\begin{equation*}
|D^{m}_{x}p_{d}(t,x)| \leq C  t\frac{K(\theta_{a}(|x|,t))}{[\theta_{a}(|x|,t)]^{d+m}}\exp{(-C^{-1}th(\theta_{a}(|x|,t)))}
\end{equation*}
holds for all $(t,x)\in (0,T]\times \bR^{d}$

(ii) There exist $t_{1}=t_{1}(d,\text{{\boldmath$\kappa$}}_{m},\ell,\text{{\boldmath$\delta$}},m)>0$ and $C>0$ depending only on $t_{1}$ such that for all $t\geq t_{1}$ and $x\in\bR^{d}$,
\begin{align*}
|D^{m}_{x}p_d(t,x)| \le C \sum_{m-2k \geq 0,k\in \bN_{0}}  |x|^{m-2k} \left( (h^{-1}(t^{-1}))^{-d-2(m-k)}\wedge t\frac{K(|x|)}{|x|^{d+2(m-k)}}\right).
\end{align*}
\end{theorem}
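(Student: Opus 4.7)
The plan is to reduce everything to pointwise heat kernel bounds for higher dimensional isotropic unimodal L\'evy processes having the same characteristic exponent $\psi$, and then apply Proposition \ref{p:hku} (for part (i)) and Lemma \ref{l:hke_largetime} (for part (ii)) to each such process. The starting point is the identity \eqref{eqn 08.04.18:41} from Remark \ref{rmk 07.27.10:03}, which gives
\begin{equation*}
|D^{m}_{x}p_{d}(t,x)| \leq C \sum_{\substack{k \in \bN_0 \\ m-2k \geq 0}} |x|^{m-2k} p_{d+2(m-k)}(t,|x|),
\end{equation*}
where $p_{d+2(m-k)}$ is the transition density of an isotropic unimodal L\'evy process on $\bR^{d+2(m-k)}$ whose L\'evy density is (up to a harmless constant) $\mathcal{T}^{m-k}j_d$. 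By Remark \ref{rmk 09.27.14:11} (ii), the assumption $\bold{H}(d,m)$ on $j_d$ implies that $\mathcal{T}^{m-k}j_d$ satisfies $\bold{H}(d+2(m-k),k)$ for every admissible $k$; in particular each of these densities is nonnegative, decreasing, and comparable to $r^{-d-2(m-k)}\ell(r^{-1})$. Crucially, the characteristic exponent of all these auxiliary processes is still $\psi$, so the functions $h$, $K$, $\ell$ and $\theta_a(\cdot,\cdot)$ remain unchanged.

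For part (i), I will apply the upper bound of Proposition \ref{p:hku} in dimension $d+2(m-k)$ to each summand. This yields, for $a$ at least as large as the corresponding $a_{d+2(m-k)}$ from Lemma \ref{lem 08.04.17:37} (so I set $a_{d,m}:=\max_{0\le k\le \lfloor m/2\rfloor} a_{d+2(m-k)}$),
\begin{equation*}
p_{d+2(m-k)}(t,|x|) \leq C\, t\,\frac{K(\theta_{a}(|x|,t))}{[\theta_{a}(|x|,t)]^{d+2(m-k)}} \exp\!\left(-C^{-1} t\, h(\theta_{a}(|x|,t))\right).
\end{equation*}
Since $|x| \leq \theta_{a}(|x|,t)$ by definition, I can bound $|x|^{m-2k}$ by $[\theta_{a}(|x|,t)]^{m-2k}$ and the arithmetic $d+2(m-k)-(m-2k) = d+m$ combines the exponents perfectly so that every term produces exactly the claimed quantity $t K(\theta_a)/\theta_a^{d+m}\exp(-C^{-1}th(\theta_a))$. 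Summing finitely many such terms gives the estimate in (i).

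For part (ii), I will repeat the same reduction but now apply Lemma \ref{l:hke_largetime} to each auxiliary process. Letting $t_1$ be the maximum of the finitely many thresholds supplied by Lemma \ref{l:hke_largetime} in dimensions $d, d+2,\dots, d+2m$, we get for $t\ge t_1$,
\begin{equation*}
p_{d+2(m-k)}(t,|x|) \leq C\left( (h^{-1}(t^{-1}))^{-d-2(m-k)} \wedge t\,\frac{K(|x|)}{|x|^{d+2(m-k)}}\right),
\end{equation*}
and multiplying by $|x|^{m-2k}$ and summing gives the asserted bound directly.

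The main obstacle I foresee is simply the bookkeeping: verifying that the hypothesis $\bold{H}(d+2(m-k),0)$ is available for every auxiliary density (handled by Remark \ref{rmk 09.27.14:11} (ii)), and checking that $a_{d,m}$ can be chosen once and for all so that the upper bound of Proposition \ref{p:hku} applies simultaneously in every dimension $d+2(m-k)$. Because only finitely many dimensions are involved, this is painless. Beyond that, the algebraic identity $d+2(m-k)-(m-2k)=d+m$ is the key cancellation that makes the weight $|x|^{m-2k}$ merge cleanly with the polynomial decay in $\theta_a$, producing the clean final exponent $d+m$ in the statement.
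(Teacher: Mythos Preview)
Your proposal is correct and matches the paper's own proof essentially line by line: both invoke \eqref{eqn 08.04.18:41} to reduce to higher-dimensional heat kernels, apply Proposition \ref{p:hku} (for (i)) and Lemma \ref{l:hke_largetime} (for (ii)) to each $p_{d+2(m-k)}$, and then use $|x|\le \theta_a(|x|,t)$ together with the identity $d+2(m-k)-(m-2k)=d+m$ to collapse the sum. The only cosmetic difference is that the paper observes $a_{d+2(m-k)}\le a_{d+2m}$ and takes $a_{d,m}=a_{d+2m}$, whereas you take a maximum over the finitely many thresholds; these are the same number.
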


\begin{proof}

Using \eqref{eqn 08.04.18:41} with Lemma \ref{l:hke_largetime}, the second assertion easily follows. 

For the first assertion, let $a_{d+2(m-k)}=2(d+2(m-k))C_{3}/C_{0}$ ($0 \leq m-2k \leq m$) be taken from Lemma \ref{lem 08.04.17:37}. Since $a_{d+2(m-k)} \leq a_{d+2m}$, using \eqref{eqn 08.04.18:41} and Proposition \ref{p:hku}, for any $a\geq a_{d+2m}$, we have
\begin{align*}
|D^{m}_{x}p_{d}(t,x)| 
&\leq C \sum_{m-2k \geq 0,k\in \bN_{0}} |x|^{m-2k}p_{d+2(m-k)}(t,|x|)
\\
& \leq C \sum_{m-2k \geq 0,k\in \bN_{0}} |x|^{m-2k}
\times t\frac{K(\theta_{a}(|x|,t))}{[\theta_{a}(|x|,t)]^{d+2(m-k)}}\exp{\left(-\frac{C_{0}}{4c_{1}a}th(\theta_{a}(|x|,t)) \right)}
\\
& \leq C \sum_{m-2k \geq 0,k\in \bN_{0}} [\theta_{a}(|x|,t)]^{m-2k}
 \times t\frac{K(\theta_{a}(|x|,t))}{[\theta_{a}(|x|,t)]^{d+2(m-k)}}\exp{\left(-\frac{C_{0}}{4c_{1}a}th(\theta_{a}(|x|,t)) \right)}
\\
& \leq C t\frac{K(\theta_{a}(|x|,t))}{[\theta_{a}(|x|,t)]^{d+m}}\exp{\left(-\frac{C_{0}}{4c_{1}a}th(\theta_{a}(|x|,t)) \right)}.
\end{align*}
Therefore, by taking $a_{d,m}=a_{d+2m}$ we prove the first assertion. The theorem is proved.
\end{proof}

The following lemma is a collection of computations used in the rest of the article.

\begin{lemma}\label{rmk 05.07.16:12}
Let $a, b,T>0$ and $d\in\bN$.\\
(i) There exists $C=C(b,d)>0$ such that for all $t>0$
\begin{equation*}
\begin{aligned}
\int_{\bR^{d}} t\frac{K(|x|)}{|x|^{d}}\exp{(-bth(|x|))}\,dx \leq C.
\end{aligned}
\end{equation*}
(ii) Suppose $\ell$ satisfies the condition in Assumption \ref{ell_con} (ii) and $\theta$ is given by \eqref{d:theta}. There exists $C=C(a,b,d)>0$ such that for all $t>0$
\begin{equation*}
\begin{aligned}
\int_{\bR^{d}} t\frac{K(\theta_{a}(|x|,t))}{[\theta_{a}(|x|,t)]^d}\exp{(-b th(\theta_{a}(|x|,t)))}\,dx \leq C.
\end{aligned}
\end{equation*}
(iii) Suppose $\ell$ satisfies the condition in Assumption \ref{ell_con} (ii) and $\theta$ is given by \eqref{d:theta}. Then for any $k,m\in \bN_{0}$, $s\in(0,1)$ and $x,y\in \bR^{d}$ such that $|y|\leq \theta_{a}(|x|,t)/2$, we have
\begin{align}\label{eqn 09.02.14:03}
 t^{1-k}\frac{K(\theta_{a}(|x+sy|,t))}{\theta_{a}(|x+sy|,t)^{d+m}}\exp{(-b th(\theta_{a}(|x+sy|,t)))} 
 \leq C t^{-k}\frac{1}{\theta_{a}(|x|,t)^{d+m}}\exp{(-C^{-1} th(\theta_{a}(|x|,t)))},
\end{align}
where the constant $C$ depends only on $b,d,m$.

(iv) Let $m\in\bN_{0}$, $a\geq a_{d+m}$, and let $\theta$ be given by \eqref{d:theta}. Suppose $\ell$ satisfies the condition in Assumption \ref{ell_con} (ii). Then for any $(t,x),(t,y)\in (0,T]\times \bR^{d}$ with $|x|\leq |y|$, we have
\begin{align}\label{eqn 09.05.16:39}
t\frac{K(\theta_{a}(|y|,t))}{\theta_{a}(|y|,t)^{d+m}}\exp{\left( -bth(\theta_{a}(|y|,t)) \right)} 
 \leq C t\frac{K(\theta_{\tilde{a}}(|x|,t))}{\theta_{\tilde{a}}(t,|x|)^{d+m}}\exp{\left( -C^{-1}th(\theta_{\tilde{a}}(|x|,t)) \right)}
\end{align}
where the constant $C>0$ depends only on $\text{{\boldmath{$\kappa$}}}_{m},a,b,d,m$ and $T$, $\tilde{a}=(\frac{b_{0}}{b} \vee 1)a$ and $b_{0}$ (corresponding to $d+m$) comes from Proposition \ref{p:hku}.
\end{lemma}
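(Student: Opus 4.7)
The four claims hinge on the identity $h'(r) = K'(r) + L'(r) = -\tfrac{2}{r}K(r)$, which follows from differentiating the defining integrals of $K$ and $L$ and observing that the boundary contributions $r^{-1}\ell(r^{-1})$ cancel. For (i), I would pass to polar coordinates to rewrite the integral as $|S^{d-1}|\int_0^\infty tK(r)r^{-1}e^{-bth(r)}\,dr$, then make the substitution $u = th(r)$ (legitimate since Assumption \ref{ass bernstein} implies $h(0^+)=\infty$ and $h(\infty)=0$) to obtain $\tfrac{|S^{d-1}|}{2}\int_0^\infty e^{-bu}\,du = \tfrac{|S^{d-1}|}{2b}$.

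For (ii), I would split the domain at $r_a := (\ell^{-1}(a/t))^{-1}$. On $\{|x|\leq r_a\}$, $\theta_a(|x|,t)=r_a$ is constant in $x$, so the integrand is constant and the integral is bounded by $|B_1|\,tK(r_a) \asymp a$, using $tK(r_a)\asymp t\ell(r_a^{-1})=a$. On $\{|x|>r_a\}$, $\theta_a(|x|,t)=|x|$ and the bound reduces to (i). For (iii), I would first verify $\theta_a(|x+sy|,t)\asymp \theta_a(|x|,t)$ under $|y|\leq \theta_a(|x|,t)/2$ by a case analysis on whether $|x|\geq r_a$ combined with the triangle inequality. The scaling of $K\asymp\ell(\cdot^{-1})$ and of $h$ from \eqref{eqn 05.12:15:07} then permits replacing $\theta_a(|x+sy|,t)$ by $\theta_a(|x|,t)$ at the cost of losing constants in the exponential rate. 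Finally, splitting $e^{-bth(\theta)}=e^{-bth(\theta)/2}\cdot e^{-bth(\theta)/2}$ and using $tK(\theta)\leq th(\theta)$ together with $\sup_{u\geq 0}ue^{-bu/2}<\infty$ absorbs $tK(\theta)$ into a constant, converting $t^{1-k}$ into $t^{-k}$ and retaining half of the exponential rate.

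For (iv), let $\theta_y := \theta_a(|y|,t)$ and $\theta_x := \theta_{\tilde a}(|x|,t)$; since $\tilde a\geq a$ and $|x|\leq |y|$, we have $\theta_x\leq \theta_y$. The prefactor comparison $K(\theta_y)/\theta_y^{d+m}\leq CK(\theta_x)/\theta_x^{d+m}$ follows from the monotonicity of $r\mapsto j_d(r)/r^m$ combined with $K(r)\asymp r^dj_d(r)$. The exponential comparison is the substantive part and I would treat it by cases. If $\theta_y\asymp \theta_x$, the scaling of $h$ yields $h(\theta_y)\geq C_h^{-1}h(\theta_x)$ for some constant $C_h$, and choosing $C^{-1}\leq b/C_h$ gives $e^{-bth(\theta_y)}\leq e^{-C^{-1}th(\theta_x)}$. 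If $\theta_y\gg \theta_x$, then under Assumption \ref{ell_con}(ii)--(1) the estimate $h((\ell^{-1}(\tilde a/t))^{-1})\asymp \tilde a/t$ bounds $th(\theta_x)\lesssim \tilde a$, making the right-hand exponential bounded below by a constant; under (ii)--(2), the slowly-varying condition \eqref{eqn 08.12.17:59} applied with parameter tuned to $C^{-1}\tilde a$ absorbs the potentially large factor $e^{-C^{-1}th(\theta_x)/2}$ into a bounded multiple of $1/h(\theta_x)$. The specific choice $\tilde a\geq b_0 a/b$ is precisely what makes the upper-bound rate $b$ compatible with the lower-bound rate $b_0$ in Proposition \ref{p:hku} at the shifted scale $\theta_{\tilde a}$.

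The main obstacle is part (iv) in the regime $\theta_y\gg\theta_x$ under Assumption \ref{ell_con}(ii)--(2): here $th(\theta_x)$ need not be bounded and the scaling of $h$ yields only the weak bound $h(cr)\leq Cc^{-2}h(r)$, so \eqref{eqn 08.12.17:59} must be invoked with the parameter carefully tuned to $C^{-1}$ and $\tilde a$, exploiting the rate compatibility built into $\tilde a = (b_0/b\vee 1)a$.
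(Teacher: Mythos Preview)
Your treatment of (i)--(iii) is correct and matches the paper's: the paper defers (i) and (ii) to \cite{KP21}, and proves (iii) by the same case analysis showing $\theta_a(|x+sy|,t)\asymp\theta_a(|x|,t)$, followed by $tK(\theta)\le th(\theta)$ and $th(\theta)e^{-cth(\theta)}\le C$.

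Part (iv) is where your argument has a genuine gap. You attempt a direct comparison: the prefactor bound $K(\theta_y)\theta_y^{-(d+m)}\le C\,K(\theta_x)\theta_x^{-(d+m)}$ is correct, but it discards too much. When $\theta_x=(\ell^{-1}(\tilde a/t))^{-1}$ one has $t\ell(\theta_x^{-1})\asymp\tilde a$ and hence $th(\theta_x)\asymp\tilde a\cdot h(\theta_x)/\ell(\theta_x^{-1})$; under Assumption~\ref{ell_con}(ii)--(2) the ratio $h(r)/\ell(r^{-1})$ is unbounded as $r\to0$, so $th(\theta_x)$ can be arbitrarily large. After your prefactor bound you are left with $e^{-bth(\theta_y)}\le Ce^{-C^{-1}th(\theta_x)}$, which in the regime $th(\theta_y)\ll th(\theta_x)$ forces $e^{C^{-1}th(\theta_x)}\le C$, impossible. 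Your invocation of \eqref{eqn 08.12.17:59} yields only $e^{-C^{-1}th(\theta_x)/2}\le C'/h(\theta_x)$, an \emph{upper} bound on a decaying quantity; it cannot control the growing factor your argument produces. A successful direct argument would have to retain and exploit the sharp smallness $(\theta_x/\theta_y)^{d+m}$ of the prefactor ratio against the exponential, and it is not clear this can be done uniformly over all $\ell$ satisfying (ii)--(2).

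The paper's route is cleaner and explains both the hypothesis $a\ge a_{d+m}$ and the form of $\tilde a$. Rescale time to $\bar t=bt/b_0$ and note $\theta_a(r,t)=\theta_{\bar a}(r,\bar t)$ with $\bar a=ba/b_0$; then the left side of \eqref{eqn 09.05.16:39} equals $(b_0/b)\,\bar t\,K(\theta_{\bar a}(|y|,\bar t))\,\theta_{\bar a}(|y|,\bar t)^{-(d+m)}e^{-b_0\bar t\,h(\theta_{\bar a}(|y|,\bar t))}$, which by the \emph{lower} bound in Proposition~\ref{p:hku} (with $\varepsilon=\bar a$) is $\le C\,p_{d+m}(\bar t,y)$. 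Since $p_{d+m}$ is isotropic unimodal, $p_{d+m}(\bar t,y)\le p_{d+m}(\bar t,x)$ whenever $|x|\le|y|$. Finally the \emph{upper} bound in Proposition~\ref{p:hku} (this is where $a\ge a_{d+m}$ is used) converts $p_{d+m}(\bar t,x)$ back; the split $b\ge b_0$ versus $b<b_0$ decides whether the resulting scale parameter is $a$ or $(b_0/b)a=\tilde a$. Your closing sentence hints at this rate-matching, but the body of your argument never uses it.
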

\begin{proof}
(i), (ii) See \cite[Lemma 3.1]{KP21}

(iii) Let $s\in(0,1)$ and $|y| \leq \theta_{a}(|x|,t)/2$. One can check that if $\theta_{a}(|x|,t)= (\ell^{-1}(a/t))^{-1}$, then
\begin{align*}
\theta_{a}(|x|,t)  \leq (|x+sy| \vee (\ell^{-1}(a/t))^{-1})  = \theta_{a}(|x+sy|,t)
 \leq ((|x|+\theta_{a}(|x|,t))\vee (\ell^{-1}(a/t))^{-1})  \leq 2\theta_{a}(|x|,t)  
\end{align*}
and if $\theta_{a}(|x|,t)=|x|$ then $\theta_{a}(|x|,t)/2 \leq \theta(|x+sy|) \leq 2\theta_{a}(|x|,t)$. Hence, we have
\begin{align*}
&t^{1-k}\frac{K(\theta_{a}(|x+sy|,t))}{\theta_{a}(|x+sy|,t)^{d+m}}\exp{(-b^{-1}th(\theta_{a}(|x+sy|,t))))}
\\
&\leq C t^{1-k}\frac{h(\theta_{a}(|x|,t)/2)}{[\theta_{a}(|x|,t)/2]^{d+m}}\exp{(-C^{-1}th(2\theta_{a}(|x|,t)))}\mathbf{1}_{\theta_{a}(|x|,t)=|x|}
\\
&\quad + C t^{1-k}\frac{h(\theta_{a}(|x|,t))}{\theta_{a}(|x|,t)^{d+m}}\exp{(-C^{-1}th(2\theta_{a}(|x|,t)))}\mathbf{1}_{\theta_{a}(|x|,t)=(\ell^{-1}(a/t))^{-1}}
\\
&\leq C  t^{-k} \frac{1}{\theta_{a}(|x|,t)^{d+m}} \exp{(-C^{-1}th(\theta_{a}(|x|,t)))}.
\end{align*}
Thus we have \eqref{eqn 09.02.14:03}.

(iv) Suppose that $x,y\in\bR^{d}$ and let $|x|\leq |y|$. Then if we let $ba/b_{0} = \bar{a}$ and $bt/b_{0}=\bar{t}$, by Proposition \ref{p:hku} (with $d+m$ in place of $d$), and the relation
\begin{equation}\label{eqn 09.13.13:46}
\theta_{a}(r,t) = (\ell^{-1}(a/t))^{-1} \vee r = (\ell^{-1}(bab_{0}/btb_{0}))^{-1} \vee r = \theta_{\bar{a}}(r,\bar{t}),
\end{equation}
we can check that
\begin{align}\label{eqn 09.13.16:14}
 t\frac{K(\theta_{a}(|y|,t))}{\theta_{a}(|y|,t)^{d+m}}\exp{\left(-b th(\theta_{a}(|y|,t)) \right)} 
&= \left( \frac{b_{0}}{b} \right) \left( \frac{b}{b_{0}} \right)  t\frac{K(\theta_{a}(|y|,t))}{\theta_{a}(|y|,t)^{d+m}}\exp{\left(- b_{0} \frac{b}{b_{0}}  th(\theta_{a}(|y|,t)) \right)}  \nonumber
\\
&= \left( \frac{b_{0}}{b} \right) \bar{t} \frac{K(\theta_{\bar{a}}(|y|,\bar{t}))}{\theta_{\bar{a}}(|y|,\bar{t})^{d+m}}\exp{\left(- b_{0} \bar{t} h(\theta_{\bar{a}}(|y|,\bar{t})) \right)}  \nonumber
\\
&\leq C \left( \frac{b_{0}}{b} \right) p_{d+m}(\bar{t},y) \leq C \left( \frac{b_{0}}{b} \right) p_{d+m}(\bar{t},x),
\end{align}
where $p_{d+m}$ is a heat kernel for isotropic unimodal $d+m$-dimensional L\'evy process with L\'evy measure $r^{-m}j_{d}(r)$. If $b_{0}\leq b$, then $\bar{a} = ba/b_{0} \geq a=\tilde{a}$. Therefore, applying Proposition \ref{p:hku} and \eqref{eqn 09.13.13:46} again, we have
\begin{align*}
 t\frac{K(\theta_{a}(|y|,t))}{\theta_{a}(|y|,t)^{d+m}}\exp{\left(-b th(\theta_{a}(|y|,t)) \right)}
&\leq C \left( \frac{b_{0}}{b} \right) p_{d+m}(\bar{t},x)
\\
&\leq C  \left( \frac{b_{0}}{b} \right) \bar{t} \frac{K(\theta_{\bar{a}}(|x|,\bar{t}))}{\theta_{\bar{a}}(|x|,\bar{t})^{d+m}}\exp{\left(-\frac{C_{0}}{4c_{1}\bar{a}}\bar{t}h(\theta_{\bar{a}}(|x|,\bar{t})) \right)}
\\
&\leq C t  \frac{K(\theta_{a}(|x|,t))}{\theta_{a}(|x|,t)^{d+m}}\exp{\left(-\frac{C_{0}}{4c_{1}a}th(\theta_{a}(|x|,t)) \right)}.
\end{align*}
If $b_{0}>b$ (i.e. $\tilde{a} = \frac{b_{0}}{b}a \geq a$), then using the above argument with
$\theta_{a}(|x|,\bar{t}) = (\ell^{-1}(b_{0}a/bt))^{-1}\vee |x| = \theta_{\tilde{a}}(|x|,t)$,
we have
\begin{align*}
 t\frac{K(\theta_{a}(|y|,t))}{[\theta_{a}(|y|,t)]^{d+m}}\exp{\left(-b th(\theta_{a}(|y|,t)) \right)}
&\leq C \left( \frac{b_{0}}{b} \right) \bar{t}\frac{K(\theta_{a}(|x|,\bar{t}))}{\theta_{a}(|x|,\bar{t})^{d+m}}\exp{\left(-\frac{C_{0}}{4c_{1}a}\bar{t}h(\theta_{a}(|x|,\bar{t})) \right)}
\\
&= C t \frac{K(\theta_{\tilde{a}}(|x|,t))}{\theta_{\tilde{a}}(|x|,t)^{d+m}}\exp{\left(-\frac{bC_{0}}{4b_{0}c_{1}a}th(\theta_{\tilde{a}}(|x|,t)) \right)}
\\
& \leq C t \frac{K(\theta_{\tilde{a}}(|x|,t))}{\theta_{\tilde{a}}(|x|,t)^{d+m}}\exp{\left(-C^{-1}th(\theta_{\tilde{a}}(|x|,t)) \right)}.
\end{align*}
The lemma is proved.
\end{proof}

\begin{remark}\label{rmk 09.13.15:47}
(i) We can also show that for any $m,k\in \bN_{0}$, $b>0$ and $x,y\in \bR^{d}\setminus\{0\}$ with $|y|\leq |x|/2$
\begin{align*}
 t^{1-k}\frac{K(|x+y|)}{|x+y|^{d+m}}\exp{(-b th(|x+y|))}
& \leq t^{1-k}\frac{h(|x+y|)}{|x+y|^{d+m}}\exp{(-b th(|x+y|))}
\\
& \leq C t^{-k}\frac{1}{|x|^{d+m}}\exp{(-C^{-1} th(2|x|))}
\\
&\leq C t^{-k}\frac{1}{|x|^{d+m}}\exp{(-C^{-1} th(|x|))}
\end{align*}
where the constant $C>0$ depends only on $d,b,m$.

(ii) Due to \cite[Corollary 2.13]{cho21heat}, if $\ell$ satisfies Assumption \ref{ass bernstein} (i), then we have the following correspondence of \eqref{eqn 09.13.16:14};
\begin{align*}
 t\frac{K(|y|)}{\theta_{a}(|y|,t)^{d+m}}\exp{\left(-b th(|y|) \right)} 
\leq C \left( \frac{b_{0}}{b} \right) p_{d+m}(ct,y)  
\leq C \left( \frac{b_{0}}{b} \right) p_{d+m}(ct,x),
\end{align*}
where the constants $C,c>0$ depend only on $b,d,m,T,\kappa_{1,m}$ and $\kappa_{2,m}$. Hence, if we apply \cite[Corollary 2.13]{cho21heat} again for any $x,y\in \bR^{d}\setminus\{0\}$ such that $|x|\leq |y|$ we have
\begin{align*}
t\frac{K(|y|)}{|y|^{d+m}}\exp{\left( -bth(|y|) \right)}   \leq C t\frac{K(|x|)}{|x|^{d+m}}\exp{\left( -C^{-1}th(|x|) \right)},
\end{align*}
where the constant $C$ depends only on $b,d,m,T,\kappa_{1,m}$ and $\kappa_{2,m}$.
\end{remark}

\begin{lemma} \label{pfracderivativeestimate}
Let $m\in \bN$, and let $\theta$ be given by \eqref{d:theta}. Suppose $j_{d}$ satisfies $\bold{H}(d,m+2)$ and $\ell$ satisfies the condition in Assumption \ref{ell_con} (ii).
Then, for any $m\in \bN_{0}$, there exists $\alpha_{m}>0$ such that the following holds: for any $a\geq\alpha_{m}$ and   $T>0$, there exists  $C>0$ depending only on $\text{{\boldmath{$\kappa$}}}_{m+2},d,m,a$ and $T$ such that
\begin{align*}
\left| \mathcal{L}D^{m}_{x}p_{d}(t,x) \right| & \leq  C  \frac{K(\theta_{a}(|x|,t))}{[\theta_{a}(|x|,t)]^{d+m}}\exp{(-C^{-1}th(\theta_{a}(|x|,t)))}
\end{align*}
for all $(t,x)\in (0,T]\times \bR^{d}$.
\end{lemma}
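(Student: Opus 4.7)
The plan is to apply the representation \eqref{eqn 09.28.11:26} with the cutoff taken at the natural scale $R:=\theta_{a}(|x|,t)/2$ rather than at $1$, so that the gradient compensator is aligned with the heat-kernel geometry. This produces the decomposition $\mathcal{L}D^{m}_{x}p_{d}(t,x)=I_{1}+I_{2}$, where $I_{1}$ collects the second-order Taylor remainder on $\{|y|\le R\}$ and $I_{2}$ collects the pure difference on $\{|y|>R\}$. I would then treat $I_{1}$ by Taylor expansion (using the hypothesis $j_{d}\in\bold{H}(d,m+2)$) and $I_{2}$ by direct kernel estimates from Theorem \ref{thm 07.25.16:53}.

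For $I_{1}$, I would write the remainder as $\int_{0}^{1}(1-s)\,y^{\top}D^{m+2}_{x}p_{d}(t,x+sy)y\,ds$, so the integrand is dominated by $C|y|^{2}\sup_{s\in[0,1]}|D^{m+2}_{x}p_{d}(t,x+sy)|$. The hypothesis $j_{d}\in\bold{H}(d,m+2)$ activates Theorem \ref{thm 07.25.16:53} (i) at order $m+2$, and then Lemma \ref{rmk 05.07.16:12} (iii) with $k=0$ replaces $\theta_{a}(|x+sy|,t)$ by $\theta_{a}(|x|,t)$ throughout $\{|y|\le R\}$. Since $\int_{|y|\le R}|y|^{2}j_{d}(|y|)dy\asymp R^{2}K(R)$ by \eqref{Kellcomp} together with \eqref{eqn 08.23.11:50}, the bound on $|I_{1}|$ already takes the desired form $C\,\theta_{a}(|x|,t)^{-d-m}K(\theta_{a}(|x|,t))\exp(-C^{-1}th(\theta_{a}(|x|,t)))$.

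For $I_{2}$, I would split into $I_{2,a}=-D^{m}_{x}p_{d}(t,x)\int_{|y|>R}j_{d}(|y|)dy$ and $I_{2,b}=\int_{|y|>R}D^{m}_{x}p_{d}(t,x+y)j_{d}(|y|)dy$. The piece $I_{2,a}$ follows from Theorem \ref{thm 07.25.16:53} (i) at $x$ together with the comparison $\int_{|y|>R}j_{d}(|y|)dy\le CL(R)\le Ch(\theta_{a}(|x|,t))$ (obtained by splitting $L$ as $\int_{R}^{2R}+\int_{2R}^{\infty}$, bounding the first piece by $C\ell(R^{-1})\asymp CK(R)$ via the scaling of $\ell$ and \eqref{Kellcomp}, and using $L\le h$ for the second) and the absorption $th(R)e^{-cth(R)}\le C$. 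For $I_{2,b}$ I would change variables to $z=x+y$ and split $\{|z-x|>R\}$ into $\{|z|\ge|x|\}$ and $\{|z|<|x|\}$. On $\{|z|\ge|x|\}$ Lemma \ref{rmk 05.07.16:12} (iv) transfers the bound on $D^{m}_{x}p_{d}(t,z)$ back to $x$ at the cost of enlarging $a$ to $\tilde a$, after which the remaining tail is absorbed exactly as in $I_{2,a}$. On $\{|z|<|x|\}$ I would use $j_{d}(|z-x|)\le j_{d}(R)\asymp K(R)/R^{d}$ (monotonicity of $j_{d}$ and \eqref{Kellcomp}) and then estimate $\int_{|z|\le|x|}|D^{m}_{x}p_{d}(t,z)|dz$ by splitting at $(\ell^{-1}(a/t))^{-1}$: on the inner ball $\theta_{a}$ is constant while on the outer shell one integrates radially, using the essentially decreasing nature of $K$ under Assumption \ref{ell_con} (ii).

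The main obstacle is the last subregion of $I_{2,b}$ in the non-scaling regime of Assumption \ref{ell_con} (ii), where $R=(\ell^{-1}(a/t))^{-1}$ and no polynomial gain $R^{-m}$ is available from the monotonicity of $j_{d}$ alone. There one must keep the exponential $\exp(-C^{-1}th(R))$ alive throughout and exploit the freedom to enlarge $a$ supplied by Lemma \ref{rmk 05.07.16:12} (iv) to match the constants produced by the tail bound and by the radial integration. This matching is precisely what forces the condition $a\ge\alpha_{m}$ in the statement, with $\alpha_{m}$ depending only on $\text{{\boldmath{$\kappa$}}}_{m+2},d,m$ through the constants appearing in Theorem \ref{thm 07.25.16:53} and Lemma \ref{rmk 05.07.16:12}.
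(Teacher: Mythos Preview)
Your treatment of $I_{1}$ and of $I_{2,a}$ matches the paper's handling of the terms $III$ and $|D^{m}_{x}p_{d}(t,x)|\times I$, and the piece of $I_{2,b}$ on $\{|z|\ge |x|\}$ is essentially the paper's far-field term $II'$. The gap is in your treatment of $\{|z|<|x|\}$ for $m\ge 1$. In the regime $\theta_{a}(|x|,t)=|x|$ (so $R=|x|/2$) with $|x|\gg\rho:=(\ell^{-1}(a/t))^{-1}$, the region $\{|z|<|x|,\ |z-x|>R\}$ contains the ball $B_{\rho}$, on which $|D^{m}_{x}p_{d}(t,z)|$ is of size $\asymp t\,K(\rho)\rho^{-d-m}\asymp a\,\rho^{-d-m}$. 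Integrating over $B_{\rho}$ gives a contribution $\asymp \rho^{-m}$; multiplying by your bound $j_{d}(|z-x|)\le j_{d}(R)\asymp K(|x|)|x|^{-d}$ yields $K(|x|)|x|^{-d}\rho^{-m}$, which exceeds the target $K(|x|)|x|^{-d-m}$ by the uncontrolled factor $(|x|/\rho)^{m}$. Enlarging $a$ cannot close this gap, because the exponential $e^{-cth(|z|)}$ is already as favorable as possible at $|z|\approx\rho$, and there is no lower scaling on $h$ in this setting to trade $(|x|/\rho)^{m}$ against $e^{-ct[h(\rho)-h(|x|)]}$. Your diagnosis that the obstacle lies in the case $R=(\ell^{-1}(a/t))^{-1}$ is in fact the benign case: there $|z|<|x|\le 2R$ forces $\theta_{a}(|z|,t)=2R$ throughout and the computation closes.

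The paper avoids this loss by a different mechanism on the annulus $\{\theta_{a}(|x|)/2<|y|<2\theta_{a}(|x|)\}$ (its term $II''$): it integrates by parts $m$ times in $y$, transferring all spatial derivatives from $p_{d}(t,x+y)$ onto $j_{d}(|y|)$. This produces boundary terms on $\{|y|=\theta_{a}(|x|)/2\}$ and $\{|y|=2\theta_{a}(|x|)\}$ plus a bulk term with $\bigl|\tfrac{d^{m}}{dr^{m}}j_{d}\bigr|(|y|)\,|p_{d}(t,x+y)|$. The hypothesis $\bold{H}(d,m)$, via \eqref{e:H}, controls $\bigl|\tfrac{d^{m}}{dr^{m}}j_{d}\bigr|(|y|)\asymp |y|^{-d-m}\ell(|y|^{-1})\asymp \theta_{a}(|x|)^{-d-m}K(\theta_{a}(|x|))$ on the annulus, and the remaining integral $\int|p_{d}(t,x+y)|\,dy$ is uniformly bounded by Lemma~\ref{rmk 05.07.16:12}~(ii). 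The point is that $\int_{\bR^{d}}|p_{d}(t,z)|\,dz\le C$, whereas $\int_{\bR^{d}}|D^{m}_{x}p_{d}(t,z)|\,dz$ scales like $\rho^{-m}$; the integration by parts is precisely what converts the latter into the former, and this is where the derivative bounds on $j_{d}$ in $\bold{H}(d,m)$ are actually used.
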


\begin{proof}
Let $T>0$ be fixed and let $t\leq T$. Take $a_{d,m+2}$ from Theorem \ref{thm 07.25.16:53}. Throughout the proof, we abuse the notation  $\theta_{a}=\theta_{a}(r):= \theta_{a}(r,t)$ for any $a\geq a_{d,m+2}$  and $t\leq T$. 

From the definition of $\mathcal{L}$ (recall \eqref{eqn 09.28.11:26}), for any $a\geq a_{d,m+2}$ we have 
\begin{align*}
\left|\mathcal{L} D^{m}_{x}p_{d}(t,\cdot)(x)\right| 
&= \left|\int_{\bR^d} \left(D^{m}_{x}p_{d}(t,x+y)-D^{m}_{x}p(t,x)-\nabla D^{m}_{x}p_{d}(t,x)\cdot y \mathbf{1}_{|y|\leq \theta_{a}(|x|)/2}\right)j_{d} (|y|)dy \right|  
\\
&\leq |D^{m}_{x}p_{d}(t,x)|\int_{|y|>\theta_{a}(|x|)/2} j_{d} (|y|)dy
+ \left| \int_{|y|>\theta_{a}(|x|)/2} D^{m}_{x} p_{d}(t,x+y)j_{d} (|y|)dy\right|  
\\
& \quad+ \int_{\theta_{a}(|x|)/2>|y|} \int_0^1 \left| D^{m+1}_{x}p_{d}(t,x+sy)- D^{m+1}_{x}p_{d}(t,x)\right| |y|j_{d}(|y|) dsdy  
\\
&=: |D^{m}_{x}p_{d}(t,x)|\times I + II + III.  
\end{align*}
For $I$ by \eqref{eqn 05.12:15:07} we have
\begin{align*}
I  \leq C \int_{\theta(|x|)_{a}/2}^{\infty} r^{-1} \ell(r^{-1}) dr  
= C L(\theta_{a}(|x|)/2) \leq C h(\theta_{a}(|x|)/2) \leq C h(\theta_{a}(|x|)).
\end{align*}
This together with Theorem \ref{thm 07.25.16:53} yields that for any $a\geq a_{d,m+2}$
\begin{align}\label{eqn 05.04.12:07}
|D^{m}_{x}p_{d}(t,x)|\times I &\leq  Ct  h(\theta_{a}(|x|)) \frac{K(\theta_{a}(|x|))}{\theta_{a}(|x|)^{d+m}}\exp{\left(-\frac{C_{0}}{4c_{1}a}th(\theta_{a}(|x|)) \right)} 
 \leq C \frac{K(\theta_{a}(|x|))}{\theta_{a}(|x|)^{d+m}}\exp{\left(-C^{-1}th(\theta_{a}(|x|)) \right)}.
\end{align}
For $III$, by the fundamental theorem of calculus,
\begin{align*}
III \leq& C(d) \int_{\theta_{a}(|x|)/2>|y|} \int_0^1 \int_0^1 \left|D^{m+2}_{x}p_{d}(t,x+usy)\right| |y|^2j_{d}(|y|) dudsdy.
\end{align*}
By Theorem \ref{thm 07.25.16:53} (i)  and \eqref{eqn 09.02.14:03}, we have
\begin{align*}
|D^{m+2}_{x}p_{d}(t,x+usy)| \leq C t \frac{K(\theta_{a}(|x+usy|)}{\theta_{a}(|x+usy|)^{d+m+2}}\exp{\left( -Cth(\theta_{a}(|x+usy|))  \right)}
\leq  C \frac{1}{\theta_{a}(|x|)^{d+m+2}}\exp{\left( -Cth(\theta_{a}(|x|))  \right)}.
\end{align*}
Therefore, for any $a\geq a_{d,m+2}$, we have
\begin{equation*}\label{eqn 05.04.12:07-1}
III \leq C  \frac{\exp{(-C^{-1}th(\theta_{a}(|x|)))}}{\theta_{a}(|x|)^{d+m+2}} \int_{\theta_{a}(|x|)/2>|y|} |y|^2j_{d}(|y|) dy.
\end{equation*}
Also, we can easily check
\begin{align*}
\int_{\theta_{a}(|x|)/2>|y|} |y|^2 j_{d}(|y|)dy &\leq  C \int_0^{\theta_{a}(|x|)} r \ell(r^{-1}) dr  
= C\theta_{a}(|x|)^{2} K(\theta_{a}(|x|)).
\end{align*}
Hence, for any $a\geq a_{d+2m}$, we have
\begin{align}\label{eqn 07.27.13:48}
III &\leq C \frac{K(\theta_{a}(|x|))}{\theta_{a}(|x|)^{d+m}}\exp{(-C^{-1}th(\theta_{a}(|x|)))}.  
\end{align}

Now we estimate $II$. It is easy to see that 
\begin{equation*}
\begin{aligned}
II \leq  \left| \int_{2\theta_{a}(|x|) \leq |y|}    D^{m}_{x} p_{d}(t,x+y)  j_{d}(|y|)   dy  \right|
 + \left| \int_{\frac{\theta_{a}(|x|)}{2}<|y|<2\theta_{a}(|x|)}  D^{m}_{x}p_{d}(t,x+y)  j_{d}(|y|)  dy \right|
=: II'+II''.
\end{aligned}
\end{equation*}
Note that for any $a>0$, 
\begin{equation*}\label{eqn 08.09.17:57}
 |x+y|\geq |y|-|x|\geq 2\theta_{a}(|x|)-|x| \geq |x| \quad \forall\, |y|\geq 2\theta_{a}(|x|).
\end{equation*}
Thus by Theorem \ref{thm 07.25.16:53} (i) and \eqref{eqn 09.05.16:39}, for any $a\geq a_{d,m+2}$, we have
\begin{align*}
|D^{m}_{x}p_{d}(t,x+y)|   
\leq 
 \tilde{C} t \frac{K(\theta_{a}(|x+y|))}{\theta_{a}(|x+y|)^{d+m}}\exp{(-\tilde{C}^{-1}th(\theta_{a}(|x+y|)))}
\leq 
  C t \frac{K(\theta_{\tilde{a}}(|x|))}{\theta_{\tilde{a}}(|x|)^{d+m}}\exp{(-C^{-1}th(\theta_{\tilde{a}}(|x|)))}
\end{align*}
for any $|y|\geq 2\theta_{a}(|x|)$, where 
\begin{equation}\label{eqn 09.07.15:24}
\tilde{a} = \tilde{C}b_{0}a\geq \tilde{C}b_{0}a_{d,m+2}.
\end{equation}
Here note that $\tilde{C}>0$ depends only on $\text{{\boldmath{$\kappa$}}}_{m+2},d,C_{0},C_{3},m,a$ and $T$ and $\tilde{C}$ can be taken large so that $b_{0}\tilde{C}>1$. Hence, for any $a\geq \tilde{C}b_{0}a_{d,m+2}$ we have
\begin{align*}
II' &\leq t \frac{K(\theta_{\tilde{a}}(|x|))}{\theta_{\tilde{a}}(|x|)^{d+m}} \exp{(-C^{-1}th(\theta_{\tilde{a}}(|x|)))}\int_{|y| \geq 2\theta_{a}(|x|)}  j_{d}(|y|)  dy 
\\
&\leq Ct \frac{K(\theta_{\tilde{a}}(|x|))}{\theta_{\tilde{a}}(|x|)^{d+m}}\exp{(-C^{-1}th(\theta_{\tilde{a}}(|x|)))}  \int_{|y| \geq \theta_{\tilde{a}}(|x|)}  |y|^{-d}\ell(|y|^{-1}) dy  \nonumber
\\  
&\leq C t \frac{K(\theta_{\tilde{a}}(|x|))}{\theta_{\tilde{a}}(|x|)^{d+m}}\exp{(-C^{-1}th(\theta_{\tilde{a}}(|x|)))}   L(\theta_{\tilde{a}}(|x|))  \nonumber
\\
&\leq Cth(\theta_{\tilde{a}}(|x|)) \frac{K(\theta_{\tilde{a}}(|x|))}{\theta_{\tilde{a}}(|x|)^{d+m}}\exp{(-C^{-1}th(\theta_{\tilde{a}}(|x|)))} 
\leq C \frac{K(\theta_{\tilde{a}}(|x|))}{\theta_{\tilde{a}}(|x|)^{d+m}}\exp{(-C^{-1}th(\theta_{\tilde{a}}(|x|)))},  
\end{align*}
where for the second inequality, we used relation $\theta_{\tilde{a}}(|x|) \leq 2\theta_{\tilde{a}}(|x|) \leq 2\theta_{a}(|x|)$.

By the integration by parts,
\begin{equation*}
\begin{aligned}
II'' 
&\leq  \sum_{n=0}^{m-1} \int_{|y| = 2\theta_{a}(|x|)} \left|\frac{d^{n}}{dr^{n}}j_{d} \right|(|y|) |D^{m-1-n}_{x}p_{d}(t,x+y)| dS
\\
&\quad + \sum_{n=0}^{m-1} \int_{|y| = \theta_{a}(|x|)/2} \left|\frac{d^{n}}{dr^{n}}j_{d} \right|(|y|) |D^{m-1-n}_{x}p_{d}(t,x+y)| dS
\\
& \quad +  \int_{\theta_{a}(|x|)/2\leq |y| \leq 2\theta_{a}(|x|)} \left|\frac{d^{m}}{dr^{m}}j_{d} \right|(|y|) |p_{d}(t,x+y)| dy
\\
&:= II''_{1}+II''_{2}+II''_{3}.
\end{aligned}
\end{equation*}
By \eqref{eqn 08.04.18:41}, and Theorem \ref{thm 07.25.16:53} (i) for any $n\leq m-1$ and $|y|= 2\theta_{a}(|x|)\geq 2|x|$ with $a\geq a_{d+2m}$,
\begin{align}\label{eqn 08.22.10:53}
|D^{m-1-n}_{x}p_{d}(t,x+y)|   
&\leq C \sum_{m-1-n-2l\geq0,l\in\bN_{0}} |x+y|^{m-1-n-2l} |p_{d+2(m-1-n-l)}(t,x+y)|  \nonumber
\\
&\leq C \sum_{m-1-n-2l\geq0,l\in\bN_{0}} |y|^{m-1-n-2l} |p_{d+2(m-1-n-l)}(t,x)|  \nonumber
\\
&\leq C 
  t \frac{K(\theta_{a}(|x|))}{\theta_{a}(|x|)^{d+m-n-1}}\exp{(C^{-1}th(\theta_{a}(|x|)))}. 
\end{align}
This, \eqref{e:H},  \eqref{eqn 05.12:15:07} and \eqref{Kellcomp}  yield
\begin{align*}
&\int_{|y| = 2\theta_{a}(|x|)} \left(\frac{d^{n}}{dr^{n}}j_{d} \right)(|y|) |D^{m-1-n}_{x}p_{d}(t,x+y)| dS  \nonumber
\\
&\leq C \Big( t \frac{K(\theta_{a}(|x|))}{\theta_{a}(|x|)^{d+m-n-1}}\exp{(-C^{-1}th(\theta_{a}(|x|)))}  \nonumber
 \times  \int_{|y| = 2\theta_{a}(|x|)}  |y|^{-d-n}\ell(|y|^{-1})  dS  \Big)  \nonumber
\\
&\leq C t \frac{K(\theta_{a}(|x|))}{\theta_{a}(|x|)^{d+m-n-1}}\exp{(-C^{-1}th(\theta_{a}(|x|)))}  
\times \theta(|x|)^{-n-1} \ell(\theta_{a}(|x|)^{-1}/2)    \nonumber
\\
&\leq C \ell(\theta_{a}(|x|)^{-1}/2) t \frac{K(\theta_{a}(|x|))}{\theta_{a}(|x|)^{d+m}}\exp{(-C^{-1}th(\theta_{a}(|x|)))}  \nonumber
\\ 
&\leq C K(2\theta_{a}(|x|))t \frac{K(\theta_{a}(|x|))}{\theta_{a}(|x|)^{d+m}}\exp{(-C^{-1}th(\theta_{a}(|x|)))}  \nonumber
\\
&\leq C th(2\theta_{a}(|x|)) \frac{K(\theta_{a}(|x|))}{\theta_{a}(|x|)^{d+m}}\exp{(-C^{-1}th(\theta_{a}(|x|)))}  
\leq C \frac{K(\theta_{a}(|x|))}{\theta_{a}(|x|)^{d+m}}\exp{(-C^{-1}th(\theta_{a}(|x|)))} 
\end{align*}
for any $a\geq a_{d,m+2}$. Hence, we have
$$
II''_{1} \leq C \frac{K(\theta_{a}(|x|))}{\theta_{a}(|x|)^{d+m}}\exp{(-C^{-1}th(\theta_{a}(|x|)))}.
$$
If $\theta_{a}(|x|)=|x|$, then by following \eqref{eqn 08.22.10:53} with the relation
$$
|x|/2 \leq |x+y| \leq 3|x| \quad \forall \, |y|=|x|/2,
$$ 
we can check that, for any $a\geq a_{d,m+2}$
\begin{align}\label{eqn 09.07.17:16}
II''_{2} \leq C\frac{K(\theta_{a}(|x|/2))}{\theta_{a}(|x|)^{d+m}}\exp{(-C^{-1}th(\theta_{a}(|x|)))} 
\leq C\frac{K(\theta(|x|))}{\theta_{a}(|x|)^{d+m}}\exp{(-C^{-1}th(\theta_{a}(|x|)))},
\end{align}
where for the last inequality, we used $\theta_{a}(|x|)/2 \leq \theta_{a}(|x|/2) \leq \theta_{a}(|x|)$ and the definition of $K$.
\\
If $\theta_{a}(|x|) = (\ell^{-1}(a/t))^{-1}$, then by Theorem \ref{thm 07.25.16:53} for any $a\geq a_{d,m+2}$ and for $|y|=\theta_{a}(|x|)/2$, we have
\begin{align*}
|D^{m-1-n}_{x}p_{d}(t,x+y)|  
&\leq C \sum_{m-1-n-2l\geq0,l\in\bN_{0}} |x+y|^{m-1-n-2l} |p_{d+2(m-1-n-2l)}(t,x+y)|  \nonumber
\\
&\leq C \sum_{m-1-n-2l\geq0,l\in\bN_{0}} |y|^{m-1-n-2l} |p_{d+2(m-1-n-2l)}(t,0)|  \nonumber
\\
&\leq C 
  t \frac{K(\theta_{a}(|x|))}{\theta_{a}(|x|)^{d+m-n-1}}\exp{(C^{-1}th(\theta_{a}(|x|)))},
\end{align*}
and thus \eqref{eqn 09.07.17:16} follows. Here we emphasize that the second inequality holds since $p_{d+2(m-1-n-2l)}$ are decreasing in $x$, and the the third inequality follows from Theorem \ref{thm 07.25.16:53}.

Finally, since $\theta_{a}(|x+y|) \leq 3\theta_{a}(|x|)$ (for $\theta_{a}(|x|)/2 \leq |y| \leq \theta_{a}(|x|)$),  using \eqref{e:H}, and Theorem \ref{thm 07.25.16:53} (i), for any $a\geq a_{d,m+2}$ we have
\begin{equation*}\label{eqn 05.04.15:38}
\begin{aligned}
II''_{3} &  \leq C \int_{\theta_{a}(|x|)/2\leq |y| \leq 2\theta_{a}(|x|)} |y|^{-d-m} \ell(|y|^{-1})|p_{d}(t,x+y)| dy
\\
& \leq C \frac{\ell(2(\theta_{a}(|x|))^{-1})}{\theta_{a}(|x|)^{d+m}} \int_{\theta_{a}(|x|)/2<|y|<2\theta_{a}(|x|)} t \frac{K(\theta_{a}(|x+y|))}{\theta_{a}(|x+y|)^{d}}e^{-C^{-1}th(\theta_{a}(|x+y|))} dy
\\
& \leq C \frac{\ell(2(\theta_{a}(|x|))^{-1})}{\theta_{a}(|x|)^{d+m}} e^{-C^{-1}th(3\theta(|x|))}
\\
& \hspace{30mm} \times \int_{\frac{\theta_{a}(|x|)}{2}<|y|<2\theta_{a}(|x|)} t \frac{K(\theta_{a}(|x+y|))}{\theta_{a}(|x+y|)^{d}}e^{-C^{-1}th(\theta_{a}(|x+y|))} dy
\\
&\leq C \frac{K(\theta_{a}(|x|)/2)}{\theta_{a}(|x|)^{d+m}} e^{-C^{-1}th(3\theta_{a}(|x|))}  
 \int_{\bR^{d}} t \frac{K(\theta(|y|))}{\theta(|y|)^{d}}e^{-C^{-1}th(\theta(|y|))} dy
\\
&\leq C \frac{K(\theta_{a}(|x|)/2)}{\theta_{a}(|x|)^{d+m}} e^{-tC^{-1} h(3\theta_{a}(|x|))} 
\leq C \frac{K(\theta_{a}(|x|))}{\theta_{a}(|x|)^{d+m}} e^{-tC^{-1} h(\theta_{a}(|x|))},
\end{aligned}
\end{equation*}
where the last two inequalities hold due to Lemma \ref{rmk 05.07.16:12} (ii), \eqref{eqn 05.12:15:07} and \eqref{eqn 08.23.11:50}. 

Therefore, for any $a\geq a_{d,m+2}$ we have
\begin{align*}
II'' \leq II''_{1}+II''_{2}+II''_{3} \leq C \frac{K(\theta_{a}(|x|))}{\theta_{a}(|x|)^{d+m}}\exp{(-C^{-1}th(\theta_{a}(|x|)))}.
\end{align*}
Hence, if we take $a$ sufficiently large enough so that $a \geq \tilde{C}b_{0}a_{d,m+2}$ (recall \eqref{eqn 09.07.15:24}), then we have
\begin{align*}
II &\leq II' + II'' \leq   C \frac{K(\theta_{a}(|x|))}{\theta_{a}(|x|)^{d+m}}\exp{(-C^{-1}th(\theta_{a}(|x|)))}.
\end{align*}
Combining this with  \eqref{eqn 05.04.12:07} and \eqref{eqn 07.27.13:48}, we have
\begin{align*}
|\mathcal{L}D^{m}_{x}p_{d}(t,x)| \leq |D^{m}_{x}p_{d}(t,x)|I + II + III
\leq  C \frac{K(\theta_{a}(|x|,t))}{[\theta_{a}(|x|,t)]^{d+m}}\exp{(-C^{-1}th(\theta_{a}(|x|,t)))}
\end{align*}
for all $a\geq \tilde{C}b_{0}a_{d,m+2}$. Hence, by taking $\alpha_{m}=\tilde{C}b_{0}a_{d,m+2}$($\geq a_{d,m+2}$), we prove the lemma.
\end{proof}

The following theorem is a generalization of previous lemma.
\begin{theorem}\label{thm 08.24.16:27}
Let $m,k\in \bN$, and let $\theta$ be given by \eqref{d:theta}. Suppose that $j_{d}$ satisfies $\bold{H}(d,m+2k)$ and $\ell$ satisfies the condition in Assumption \ref{ell_con} (ii).  
Then there exists $\alpha_{k,m}$ such that the following holds: for any $a\geq \alpha_{k,m}$ and $T>0$, there exists $C>0$ depending only on $\text{{\boldmath{$\kappa$}}}_{2k+m},d,m,a,k$ and $T$ such that
\begin{align*}
\left| \mathcal{L}^{k}D^{m}_{x}p_{d}(t,x) \right| & \leq  C t^{1-k} \frac{K(\theta_{a}(|x|,t))}{[\theta_{a}(|x|,t)]^{d+m}}\exp{(-C^{-1}th(\theta_{a}(|x|,t)))}
\end{align*}
for all $(t,x)\in (0,T]\times \bR^{d}$ and for all $a\geq \alpha_{k,m}$.
\end{theorem}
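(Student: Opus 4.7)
My plan is to proceed by induction on $k\geq 1$; the base case $k=1$ is precisely Lemma \ref{pfracderivativeestimate}, whose conclusion coincides with the claimed bound with prefactor $t^{1-1}=1$ under the (equivalent) hypothesis $\bold{H}(d,m+2)$.

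\textbf{Inductive step $k\to k+1$.} Fix $m$ and note that $\bold{H}(d,m+2(k+1))$ implies $\bold{H}(d,m'+2k)$ for every $m'\in\{0,1,\ldots,m+2\}$; hence the inductive hypothesis gives, for every sufficiently large $a$,
\[
\bigl|\mathcal{L}^{k}D^{m'}_{x}p_{d}(t,x)\bigr|\leq C\,t^{1-k}\,\frac{K(\theta_{a}(|x|,t))}{\theta_{a}(|x|,t)^{d+m'}}\,\exp\!\bigl(-C^{-1}t\,h(\theta_{a}(|x|,t))\bigr),\qquad 0\leq m'\leq m+2.
\]
I will set $v(t,x):=\mathcal{L}^{k}D^{m}_{x}p_{d}(t,x)$ (so that $D_{x}^{j}v=\mathcal{L}^{k}D^{m+j}_{x}p_{d}$), apply the integro-differential representation \eqref{eqn 09.28.11:26} of $\mathcal{L}$ to $v$ with cutoff radius $r=\theta/2$, where $\theta:=\theta_{a}(|x|,t)$, and decompose the resulting integral into the three pieces $I,II,III$ used in the proof of Lemma \ref{pfracderivativeestimate}. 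The estimates for these pieces will then be repeated line by line, with the inductive hypothesis playing the role of Theorem \ref{thm 07.25.16:53} at every step. Each piece will acquire an extra factor of $t^{1-k}$ from the inductive prefactor of $v$ and its derivatives; the accompanying factor of $h(\theta)$, $K(\theta)$, or $\ell(\theta^{-1})$ arising from either $\int_{|y|>\theta/2}j_{d}$, $\int_{|y|<\theta/2}|y|^{2}j_{d}$, or a derivative of $j_{d}$ will combine with the exponential via $(th(\theta))^{\beta}e^{-cth(\theta)}\leq Ce^{-c'th(\theta)}$ to shave off exactly one power of $t$. The net prefactor will therefore be $t^{1-k}\cdot t^{-1}=t^{-k}=t^{1-(k+1)}$, as desired. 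For the piece $II''$ I will apply $m$-fold integration by parts in $y$ to the annular integral, using $v(t,x+y)=D_{y}^{m}(\mathcal{L}^{k}p_{d}(t,x+y))$; this transfers the $D_{x}^{m}$ derivatives onto $j_{d}(|y|)$ and produces boundary terms involving $\mathcal{L}^{k}D^{m'}_{x}p_{d}$ for $0\leq m'\leq m-1$ paired with $\frac{d^{n}}{dr^{n}}j_{d}$, together with a bulk term involving $\mathcal{L}^{k}p_{d}$ and $\frac{d^{m}}{dr^{m}}j_{d}$, all of which are controlled by the inductive hypothesis and the assumption $\bold{H}(d,m+2(k+1))$.

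\textbf{Main obstacle.} The substantive analytic content is already carried out in Lemma \ref{pfracderivativeestimate}, so the principal remaining challenge will be the careful bookkeeping of the parameter $a$. Each application of Lemma \ref{rmk 05.07.16:12}(iv) can enlarge $a$ by a bounded factor $\tilde{C}b_{0}$ (depending on $d,m,\text{{\boldmath$\kappa$}}_{m+2(k+1)}$, and $T$), and only finitely many such enlargements occur per step. I will therefore set
\[
\alpha_{k+1,m}:=\tilde{C}\,b_{0}\,\max_{0\leq m'\leq m+2}\alpha_{k,m'},
\]
so that every intermediate invocation of the inductive hypothesis and of Lemma \ref{rmk 05.07.16:12}(iv) remains valid for all $a\geq\alpha_{k+1,m}$. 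With this choice the constants $C=C(\text{{\boldmath$\kappa$}}_{m+2(k+1)},d,m,a,k,T)$ stay finite at each step, closing the induction.
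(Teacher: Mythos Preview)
Your proposal is correct and follows exactly the approach the paper indicates: the paper's own proof consists of the single sentence that the case $k=1$ is Lemma \ref{pfracderivativeestimate} and that the general case follows by an induction argument, and you have correctly spelled out that induction---reprising the decomposition $I+II+III$ of Lemma \ref{pfracderivativeestimate} with the inductive hypothesis (applied to $\mathcal{L}^{k}D^{m'}_{x}p_{d}$ for $0\le m'\le m+2$) replacing the heat-kernel bounds of Theorem \ref{thm 07.25.16:53}, and handling the parameter $a$ via a finite enlargement at each step. Your bookkeeping of the hypotheses $\bold{H}(d,m+2(k+1))\Rightarrow\bold{H}(d,m'+2k)$ and of the factor $t^{1-k}\cdot t^{-1}=t^{-k}$ is exactly what is needed.
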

\begin{proof}
Due to Lemma \ref{pfracderivativeestimate},  the theorem holds for $k=1$ with $a\geq \alpha_{m}\geq a_{d,m+2}$. We can prove the theorem for general $k\geq 2$ by using an induction argument. 
\end{proof}

\begin{theorem}\label{cor 09.13.16:58}
Let $m,k\in \bN$. Suppose that $j_{d}$ satisfies $\bold{H}(d,m+2k)$, and $\ell$ satisfies Assumption \ref{ell_con} (i). Then for any $T<\infty$, we have
\begin{align}\label{eqn 09.13.15:42}
|\mathcal{L}^{k}D^{m}_{x}p_{d}(t,x)| \leq C t^{1-k}\frac{K(|x|)}{|x|^{d+m}}\exp{\left(-Cth(|x|)\right)} 
\end{align}
for all $(t,x)\in(0,T]\times(\bR^{d}\setminus\{0\})$, where the constant $C>0$ depends only on $\text{{\boldmath$\kappa$}}_{2k+m},d,m,k$ and $T$.
\end{theorem}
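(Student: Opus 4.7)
The plan is to mirror the proof of Theorem \ref{thm 08.24.16:27} and Lemma \ref{pfracderivativeestimate}, with the regularized distance $\theta_{a}(|x|,t)$ systematically replaced by $|x|$. Under Assumption \ref{ell_con} (i), $\ell$ is bounded at infinity and the relevant heat-kernel bounds admit an off-diagonal form depending only on $|x|$, so the role of Lemma \ref{rmk 05.07.16:12} (iii) and (iv) is played throughout by Remark \ref{rmk 09.13.15:47} (i) and (ii), and the role of Proposition \ref{p:hku} and Theorem \ref{thm 07.25.16:53} (i) is played by Proposition \ref{p:uhk_jump} together with \cite[Corollary 2.13]{cho21heat}.

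The argument proceeds by induction on $k$, with the base case $k=1$ being the analog of Lemma \ref{pfracderivativeestimate}: to prove
\begin{align*}
|\mathcal{L}D^{m}_{x}p_{d}(t,x)| \leq C\frac{K(|x|)}{|x|^{d+m}}\exp(-Cth(|x|))
\end{align*}
for $(t,x)\in (0,T]\times(\bR^{d}\setminus\{0\})$, I would write $\mathcal{L}D^{m}_{x}p_{d}(t,x)$ via \eqref{eqn 09.28.11:26} with cutoff radius $|x|/2$ and decompose into $I+II+III$ as in Lemma \ref{pfracderivativeestimate}, further splitting $II=II'+II''$ and, via integration by parts on the spheres $\{|y|=|x|/2\}$ and $\{|y|=2|x|\}$, $II''=II''_{1}+II''_{2}+II''_{3}$. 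The estimates of $I$ and $III$ use the derivative bound $|D^{n}_{x}p_{d}(t,x)|\leq CtK(|x|)|x|^{-(d+n)}\exp(-Cth(|x|))$, available in case (i) from \cite[Corollary 2.13]{cho21heat} together with Remark \ref{rmk 07.27.10:03}, combined with the perturbation estimate of Remark \ref{rmk 09.13.15:47} (i) when $|y|\leq |x|/2$, and the elementary integral bounds $\int_{|y|>r}j_{d}\leq Ch(r)$ and $\int_{|y|<r}|y|^{2}j_{d}\leq Cr^{2}K(r)$. The estimate of $II'$ uses Remark \ref{rmk 09.13.15:47} (ii) to move the argument from $|x+y|$ to $|x|$ since $|x+y|\geq |x|$ when $|y|\geq 2|x|$. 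The boundary terms $II''_{1}$ and $II''_{2}$ use the bound $|(d^{n}/dr^{n})j_{d}(r)|\asymp r^{-d-n}\ell(r^{-1})$ from $\bold{H}(d,m+2)$ on the two spheres, combined with the derivative bounds on $D^{m-1-n}_{x}p_{d}$. Extra factors $th(|x|)$ are absorbed into the exponential via the inequality $re^{-Cr}\lesssim e^{-C^{-1}r}$.

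Once the base case is established, the inductive step is carried out uniformly in $m$: writing $\mathcal{L}^{k}D^{m}_{x}p_{d} = \mathcal{L}(\mathcal{L}^{k-1}D^{m}_{x}p_{d})$ and repeating the three-term decomposition, the bounds on $D^{j}_{x}(\mathcal{L}^{k-1}D^{m}_{x}p_{d}) = \mathcal{L}^{k-1}D^{m+j}_{x}p_{d}$ for $j=0,1,2$ needed as inputs come from the induction hypothesis at level $k-1$ with $m$ shifted to $m+j$. Each application of $\mathcal{L}$ then contributes one extra factor of $t^{-1}$ (after absorbing a single $th(|x|)$ into the exponential), and this accumulates the prefactor $t^{1-k}$. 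The hypothesis $\bold{H}(d,m+2k)$ ensures that the integration by parts in $II''$ can be performed up to order $m$ at every level of the induction.

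The main analytical obstacle is the interior term $II''_{3}$, where the integration region $|x|/2\leq |y|\leq 2|x|$ allows $|x+y|$ to approach $0$ and hence $p_{d}(t,x+y)$ (or its higher iterate at level $k$) to become highly singular. In case (ii) this was circumvented because $\theta_{a}(|x+y|,t)$ stayed bounded below by $(\ell^{-1}(a/t))^{-1}$, and no such lower bound is available here. The remedy I would use is the refined pointwise upper bound $p_{d}(t,z)\leq CtK(|z|)|z|^{-d}\exp(-Cth(|z|))$ for $z\neq 0$, pulling the factor $\exp(-C^{-1}th(|x+y|))$ out of the integral at its smallest value $\exp(-C^{-1}th(3|x|))$ (using monotonicity of $h$), and then controlling the remaining integral by Lemma \ref{rmk 05.07.16:12} (i), i.e.\ $\int_{\bR^{d}}tK(|z|)|z|^{-d}\exp(-C^{-1}th(|z|))\,dz\leq C$. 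The remaining bookkeeping mirrors the proof of Lemma \ref{pfracderivativeestimate} and is routine.
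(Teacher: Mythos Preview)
Your proposal is correct and is exactly what the paper does: its three-sentence proof cites \cite[Corollary 2.13]{cho21heat} for $k=m=0$, uses \eqref{eqn 08.04.18:41} for $k=0$ and general $m$, and then says to follow the argument of Theorem \ref{thm 08.24.16:27} with Remark \ref{rmk 09.13.15:47} replacing Lemma \ref{rmk 05.07.16:12} (iii)--(iv), which is precisely your outline (including the treatment of $II''_{3}$ via Lemma \ref{rmk 05.07.16:12} (i)). One minor phrasing slip: in $II''_{3}$ the factor $\exp(-C^{-1}th(|x+y|))$ is bounded above by its \emph{maximum} $\exp(-C^{-1}th(3|x|))$ on the annulus (since $h$ is decreasing and $|x+y|\leq 3|x|$), not its smallest value --- the argument is otherwise as you describe.
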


\begin{proof}
By \cite[Corollary 2.13]{cho21heat}, we have \eqref{eqn 09.13.15:42} when $k=m=0$. Using this with \eqref{eqn 08.04.18:41}, we also prove \eqref{eqn 09.13.15:42} for $k=0$. Finally, if we follow the argument in Theorem \ref{thm 08.24.16:27} with Remark \ref{rmk 09.13.15:47}, then we can prove \eqref{eqn 09.13.15:42}.
\end{proof}

The following theorem is a large time estimate of the heat kernel.

\begin{theorem}\label{thm 09.13.17:01}
For each $m,k\in \bN$, there exist $t_{k,m},C$ depending on $d,\text{{\boldmath$\kappa$}}_{2k+m},\ell,\text{{\boldmath$\delta$}},m,k$ such that for all $t\geq t_{k,m}$ and $x\in\bR^{d}$, we have
\begin{align}\label{eqn 09.23.13:23}
|\mathcal{L}^{k}D^{m}_{x}p_d(t,x)|  \le C\left( t^{-k} (h^{-1}(t^{-1}))^{-d-m} \mathbf{1}_{th(|x|)\geq1} +  t^{1-k}\frac{K(|x|)}{|x|^{d+m}}\mathbf{1}_{th(|x|)\leq 1} \right).
\end{align}
\end{theorem}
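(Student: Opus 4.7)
The plan is to prove the theorem by induction on $k\in\bN_0$, mirroring the structure of the small-time proofs (Lemma \ref{pfracderivativeestimate} and Theorem \ref{thm 08.24.16:27}) but using the large-time pointwise bounds (Lemma \ref{l:hke_largetime} and Theorem \ref{thm 07.25.16:53}(ii)) as the input in place of the small-time ones.

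For the base case $k=0$, the bound follows immediately from Theorem \ref{thm 07.25.16:53}(ii). Inspecting the summand $|x|^{m-2l}\min\{(h^{-1}(t^{-1}))^{-d-2(m-l)},\,tK(|x|)/|x|^{d+2(m-l)}\}$: in the regime $th(|x|)\ge 1$ (i.e.\ $|x|\le h^{-1}(t^{-1})$) the first argument of the minimum controls and $|x|^{m-2l}\le (h^{-1}(t^{-1}))^{m-2l}$ collapses every summand to $(h^{-1}(t^{-1}))^{-d-m}$, while in the regime $th(|x|)\le 1$ the second argument gives $tK(|x|)/|x|^{d+m}$.

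For the inductive step $k\ge 1$, assume the bound for $k-1$ with any $m'\in\{m,m+1,m+2\}$ (permitted since $j_d\in\mathbf H(d,m+2k)$ implies $j_d\in\mathbf H(d,m'+2(k-1))$). I use the integral representation \eqref{eqn 09.28.11:26} applied to $f=\mathcal L^{k-1}D^m_x p_d(t,\cdot)$,
\[
\mathcal L^k D^m_x p_d(t,x)=\int_{\bR^d}\Bigl[\mathcal L^{k-1}D^m_x p_d(t,x+z)-\mathcal L^{k-1}D^m_x p_d(t,x)-\nabla\mathcal L^{k-1}D^m_x p_d(t,x)\cdot z\mathbf{1}_{|z|\le r}\Bigr]j_d(|z|)\,dz,
\]
choosing $r=h^{-1}(t^{-1})$ when $th(|x|)\ge 1$ (Case A) and $r=|x|/2$ when $th(|x|)\le 1$ (Case B). For $|z|\le r$ I apply a second-order Taylor expansion; the resulting error is controlled by $|z|^2\sup|\mathcal L^{k-1}D^{m+2}_x p_d(t,\eta)|$ over $\eta$ between $x$ and $x+z$, and the doubling of $h$ from \eqref{eqn 05.12:15:07} keeps $\eta$ in the same $th(\cdot)$-regime as $x$ up to constants, so the inductive hypothesis applies. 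Combined with $\int_{|z|\le r}|z|^2j_d(|z|)dz\asymp r^2K(r)$, the near-field contribution in Case A is $Ct^{1-k}(h^{-1}(t^{-1}))^{-d-m-2}\cdot (h^{-1}(t^{-1}))^2/t=Ct^{-k}(h^{-1}(t^{-1}))^{-d-m}$, using $K(h^{-1}(t^{-1}))\asymp 1/t$ from \eqref{Kellcomp} and \eqref{eqn 05.27.15:40}; in Case B the near-field contribution is $Ct^{2-k}K(|x|)^2/|x|^{d+m}$, and the crucial inequality $tK(|x|)\le th(|x|)\le 1$ absorbs this into $Ct^{1-k}K(|x|)/|x|^{d+m}$.

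For $|z|>r$ the gradient term drops out. The self-term $|\mathcal L^{k-1}D^m_x p_d(t,x)|\cdot L(r)$ is controlled by $L(r)\le h(r)=1/t$ in Case A and by $L(|x|/2)\lesssim h(|x|)\le 1/t$ (via doubling) in Case B, yielding the desired decay. The translate term $\int_{|z|>r}|\mathcal L^{k-1}D^m_x p_d(t,x+z)|j_d(|z|)dz$ is split further according to whether $|x+z|$ lies in the small- or large-$|x|$ regime of the inductive hypothesis, and is estimated using $j_d(|z|)\asymp K(|z|)/|z|^d$ together with the scaling of $\ell$ (Assumption \ref{ass bernstein}) to bound tail integrals of the form $\int_{2r}^\infty K(\rho)^2\rho^{-(d+m+1)}d\rho$. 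The main obstacle is precisely this bookkeeping for the far-field translate term: one must track several sub-cases ($|z|$ near $r$ versus $|z|\gg r$, and $|x+z|$ crossing the transition scale $h^{-1}(t^{-1})$), and invoke the doubling properties of $h$ and $K$ and the comparability $h(r)\asymp K(r)\asymp\ell(r^{-1})$ (for $r\ge 1$, from \eqref{eqn 09.21.13:27}) to close each estimate with the target bound.
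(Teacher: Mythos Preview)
Your induction scheme is sound and closes in every sub-case, but your treatment of the on-diagonal regime (Case~A) is \emph{different} from the paper's. For $th(|x|)\ge 1$ the paper does not use the integral representation of $\mathcal L$ at all: it bounds $|\mathcal L^{k}D^{m}_{x}p_{d}(t,x)|$ by the Fourier integral
\[
\int_{\bR^{d}}(\psi(|\xi|))^{k}|\xi|^{m}e^{-t\psi(|\xi|)}\,d\xi
\]
and splits this according to $|\xi|\le (h^{-1}(t^{-1}))^{-1}$, $(h^{-1}(t^{-1}))^{-1}\le|\xi|\le 1$, and $|\xi|\ge 1$, using the lower bound $\psi(|\xi|)\ge C\log|\xi|$ (from \eqref{ll1}) on the last piece and the scaling \eqref{H:l} on the middle one. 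This gives the on-diagonal bound $t^{-k}(h^{-1}(t^{-1}))^{-d-m}$ for \emph{all} $x$ in one stroke, bypassing the translate-term bookkeeping that your Case~A requires. Only afterwards does the paper turn to the integral representation, and only for the off-diagonal regime $th(|x|)\le 1$, where it adapts the proof of Theorem~\ref{thm 08.24.16:27} with $\theta_{a}(|x|)=|x|$ (including the integration-by-parts trick for the $II''$ piece).

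Your route has the virtue of uniformity---the same mechanism in both regimes---while the paper's Fourier argument for Case~A is shorter and avoids the delicate far-field translate estimate $\int_{|z|>2r}K(|z|)^{2}|z|^{-d-m-1}dz\lesssim r^{-d-m}t^{-2}$, which in your scheme hinges on $K(h^{-1}(t^{-1}))\asymp h(h^{-1}(t^{-1}))=t^{-1}$ (this needs $h^{-1}(t^{-1})\ge 1$ and \eqref{eqn 09.21.13:27}, not merely \eqref{Kellcomp} and \eqref{eqn 05.27.15:40} as you cite). One small caveat in your near-field step: ``doubling keeps $\eta$ in the same regime'' is not literally true---$\eta$ can cross the transition scale $h^{-1}(t^{-1})$---but the two branches of the inductive bound coincide up to constants there, so the argument still goes through.
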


\begin{proof}
First, observe that
$$
\psi(|\xi|) \geq C_{0} h(|\xi|^{-1}) \geq C_{0} L(|\xi|^{-1}).
$$
Hence, if $|\xi| \geq 1$, then using \eqref{ll1}
$$
\psi(|\xi|) \geq C_{0} L(|\xi|^{-1}) = C_{0} \int_{|\xi|^{-1}}^{1} s^{-1}\ell(s^{-1}) ds \geq C_{0} \tilde{C} \log{|\xi|}.
$$
Take $\tilde{t}_{1}$ large enough so that $\tilde{C}C_{0}^{2}\tilde{t}_{1}/4  \geq 2d+m$. Then for all $t\geq \tilde{t}_{1}$, it follows that
\begin{align}\label{eqn 08.09.18:58}
\int_{|\xi|\geq1} |\xi|^{m} e^{-C_{0}t\psi(|\xi|)/4} d\xi \leq \int_{|\xi|\geq1} |\xi|^{m} e^{-(2d+m)\log{|\xi|}} d\xi 
\leq C \int_{|\xi|\geq1} |\xi|^{-2d} d\xi<\infty.
\end{align}

On the other hand, by \eqref{eqn 09.21.13:27} and \eqref{H:l} we obtain that
\begin{equation}\label{eqn 09.23.12:33}
c_2 \left(\frac{|\xi|}{(h^{-1}(t^{-1}))^{-1}}\right)^{\delta_{3}} \leq C th(|\xi|^{-1}) \quad\text{for}\;\; (h^{-1}(t^{-1}))^{-1}\leq |\xi| \leq 1. 
\end{equation}
Let $\tilde{t}_{0}$ be a large number so that $(h^{-1}(\tilde{t}^{-1}_{0}))^{-1} \leq 1$. Then for $t\geq (\tilde{t}_{0}\vee \tilde{t}_{1}):=t_{m}$, and $|\xi|\geq1$, applying \eqref{eqn 09.23.12:33} with $|\xi|=1$, we have
\begin{align}\label{eqn 09.23.12:34}
e^{-C_{0}th(|\xi|^{-1})/2}
\leq e^{-C_{0}t_{m}h(|\xi|^{-1})/4}e^{-C_{0}t h(1)/4} 
 \le e^{-C_{0}t_{1}h(|\xi|^{-1})/4}e^{-C^{-1}(h^{-1}(t^{-1}))^{\delta_{3}}}.  
\end{align}

Hence, for $t\geq (t_{0}\vee t_{1}):=t_{m}$, by the definition of $\mathcal{L}$, \eqref {eqn 7.20.1} and \eqref{eqn 05.27.15:40} we have
\begin{equation*}
\begin{aligned}
|\mathcal{L}^{k}D^{m}_{x}p_{d}(t,x)| \leq C \int_{\bR^{d}} t^{-k} (th(|\xi|^{-1}))^{k} |\xi|^{m} e^{-C_{0} t h(|\xi|^{-1})} d\xi 
\leq C \int_{\bR^{d}} t^{-k} |\xi|^{m} e^{-C_{0} t h(|\xi|^{-1})/2} d\xi .
\end{aligned}
\end{equation*}
Using \eqref{eqn 09.23.12:33} and \eqref{eqn 09.23.12:34}, we have
\begin{align*}
|\mathcal{L}^{k}D^{m}_{x}p_{d}(t,x)| 
 &\leq C  \int_{|\xi| \leq (h^{-1}(t^{-1}))^{-1}}  t^{-k} |\xi|^{m} d\xi  \nonumber 
 \\
 & \quad + C \int_{(h^{-1}(t^{-1}))^{-1} \leq |\xi| \leq 1} t^{-k}   |\xi|^{m} e^{-C^{-1}|\xi|^{\delta_{3}}(h^{-1}(t^{-1}))^{\delta_{3}}}d\xi  \nonumber
\\
& \quad + C \int_{|\xi|\geq1} t^{-k} |\xi|^{m}   e^{-C_{0}t_{1}h(|\xi|^{-1})/4}e^{-C^{-1}(h^{-1}(t^{-1}))^{\delta_{3}}} d\xi
\\
& := I_{1}+I_{2}+I_{3}.  \nonumber
\end{align*}
It is easy to see that $I_{1} \leq C t^{-k}(h^{-1}(t^{-1}))^{-d-m}$.  Using change of variables, we have 
$$
I_{2} \leq C t^{-k}(h^{-1}(t^{-1}))^{-d-m} \int_{1\leq |\xi|} |\xi|^{m} e^{-C^{-1}|\xi|^{\delta_{3}}} d\xi \leq Ct^{-k}(h^{-1}(t^{-1}))^{-d-m}.
$$
Also, by the relation $e^{-ax} \leq c(a,\gamma)x^{-\gamma}$ ($a,\gamma>0$ and $x>0$), and \eqref{eqn 08.09.18:58},  we have
$$
I_{3} \leq  C t^{-k} (h^{-1}(t^{-1}))^{-d-m} \int_{|\xi|\geq1} |\xi|^{m} e^{-C_{0}t_{1}\psi(|\xi|)/4} d\xi \leq Ct^{-k}(h^{-1}(t^{-1}))^{-d-m}.
$$
Therefore, we have
\begin{equation*}\label{eqn 07.12.16:00}
|\mathcal{L}^{k}D^{m}_{x}p_{d}(t,x)| \leq I_{1}+I_{2}+I_{3}
\leq C t^{-k} (h^{-1}(t^{-1}))^{-d-m}.
\end{equation*}

On the other hand, if $th(|x|) \leq 1$ (equivalently $|x|\geq h^{-1}(t^{-1})$), then we easily have
$$
t^{1-k}\frac{K(|x|)}{|x|^{d+m}} \leq t^{1-k} \frac{h(|x|)}{|x|^{d+m}} \leq t^{-k}(h^{-1}(t^{-1}))^{-d-m}.
$$
Hence, to prove the theorem, it only remains to show that 
\begin{equation}\label{eqn 08.24.16:29}
|\mathcal{L}^{k}D^{m}_{x}p_{d}(t,x)| \leq C t^{1-k} \frac{K(|x|)}{|x|^{d+m}}
\end{equation}
for $th(|x|) \leq 1$ and $t\geq t_{m}$. Indeed, proving \eqref{eqn 08.24.16:29} is very similar to the proof of Theorem \ref{thm 08.24.16:27} (with $\theta_{a}(|x|)=|x|$) using the estimation 
$$
|D^{m}_{x}p_{d}(t,x)| \leq Ct \frac{K(|x|)}{|x|^{d+m}} \quad \forall\, (t,x)\in(0,\infty)\times (\bR^{d}\setminus\{0\})
$$
which can be deduced from Proposition \ref{p:uhk_jump} and \eqref{eqn 08.04.18:41}. We remark that the only difference is to estimate the following term corresponding to $II''_{k_{0},3}$ in Theorem \ref{thm 08.24.16:27};
\begin{align*}
&\int_{|x|/2 \leq |y| \leq 2|x|} \left( \frac{d^{m}}{dr^{m}}j_{d} \right)(|y|) |\mathcal{L}^{k_{0}}p_{d}(t,x+y)| dy
\\
&\leq C \int_{|x|/2 \leq |y| \leq 2|x|} |y|^{-d-m}\ell(|y|^{-1})  t^{-k_{0}} (h^{-1}(t^{-1}))^{-d} \mathbf{1}_{th(|x+y|)\geq1} dy
\\
& \quad + C \int_{|x|/2\leq |y|\leq 2|x|} \mathbf{1}_{th(|x+y|)\leq1} |y|^{-d-m}\ell(|y|^{-1}) t^{1-k_{0}}\frac{K(|x+y|)}{|x+y|^{d}}  dy
\\
&\leq C  \frac{K(|x|)}{|x|^{d+m}} \int_{|x|/2 \leq |y| \leq 2|x|}   t^{-k_{0}} (h^{-1}(t^{-1}))^{-d} \mathbf{1}_{|x+y| \leq h^{-1}(t^{-1})} dy
\\
&\quad + C \frac{K(|x|)}{|x|^{d+m}} \int_{\bR^{d}} \mathbf{1}_{th(|y|)\leq1} t^{1-k_{0}}\frac{K(|y|)}{|y|^{d}}  dy
\\
&\leq  C \frac{K(|x|)}{|x|^{d+m}}  \left( \int_{|y|\leq h^{-1}(t^{-1})} t^{-k_{0}} (h^{-1}(t^{-1}))^{-d} dy  + \int_{|y|\geq h^{-1}(t^{-1})} t^{1-k_{0}}\frac{K(|y|)}{|y|^{d}} dy  \right) 
\\
&\leq Ct^{1-(k_{0}+1)} \frac{K(|x|)}{|x|^{d+m}},
\end{align*}
where the last inequality holds since
\begin{align}\label{eqn 08.07.16:31}
\int_{a}^{\infty} K(\rho) \rho^{-1} d \rho 
&= \int_{a}^{\infty} \rho^{-2} \int_{0}^{\rho} s\ell(s^{-1}) ds \rho^{-1} d\rho  \nonumber
\\
& = \int_{0}^{a}\int_{a}^{\infty} \rho^{-3} s\ell(s^{-1}) d\rho ds + \int_{a}^{\infty}\int_{s}^{\infty} \rho^{-3} s\ell(s^{-1}) d\rho ds  \nonumber
\\
&\leq  a^{-2} \int_{0}^{a} s\ell(s^{-1}) ds + \int_{a}^{\infty}s^{-1} \ell(s^{-1}) ds = h(a) \quad \forall \, a>0.
\end{align}
 Hence, for $t\geq t_{m}$, we have \eqref{eqn 09.23.13:23}. The theorem is proved.
\end{proof}

\begin{remark}\label{r:hku_largetime}
(i) If $\ell$ satisfies Assumption \ref{ell_con} (ii), then by Theorem \ref{thm 08.24.16:27}, we see that
\begin{align}\label{eqn 08.18.12:17}
|\mathcal{L}^{k}D^{m}_{x}p_d(t,x)| 
&\leq C t^{1-k} \frac{K(\theta_{a}(|x|,t))}{[\theta_{a}(|x|,t)]^{d+m}}\exp{(-C^{-1}th(\theta_{a}(|x|,t)))}   \nonumber
\\
&\leq C  t^{-k} \frac{1}{[\theta_{a}(|x|,t)]^{d+m}} 
\leq C t^{-k} \ell^{-1}(a/t)^{d+m} \quad \text{for}\;\;t\le T,
\end{align}
where $a\geq \alpha_{k,m}$ taken from Theorem \ref{thm 08.24.16:27}.

(ii) Suppose that the function $\ell$ satisfies Assumption \ref{ell_con} (ii)--(1). Then since we assume that $\ell \asymp \ell^{\ast}$,  there are $0<\beta<1<\lambda$ such that
$$
\beta \ell^{\ast}(r) \leq h(r^{-1}) \leq \lambda \ell^{\ast}(r) \quad \forall\,r>0.
$$
From this, we can get
$$
(h^{-1}(\beta r))^{-1} \leq \ell^{-1}(r) \leq (h^{-1}(\lambda r))^{-1} \quad \forall\, r>0.
$$
Hence,  by using   \eqref{eqn 08.18.12:17}, for fixed $T>0$, we can check that
\begin{align*}
|\mathcal{L}^{k}D^{m}_{x}p_{d}(t,x)| \leq C t^{-k} (h^{-1}(\lambda a/t))^{-d-m} = C t^{-k} (\tilde{h}^{-1}(t^{-1}))^{-d-m} \quad \text{for}\; t\leq T,
\end{align*}
where $\tilde{h}(r) = (a\lambda)^{-1}h(r)$. Using this and following the proof of Theorem \ref{cor 09.13.16:58}, if $\ell$ satisfies Assumption \ref{ell_con} (ii)--(1), then  \eqref{eqn 09.23.13:23} holds for all $t>0$ with $\tilde{h}:=(a\lambda)^{-1}h$ in place of $h$. Therefore, if $\ell$ satisfies Assumption \ref{ell_con} (ii)--(1), then we denote $\tilde{h}=h$ for convenience.

(iii) It seems nontrivial to control $(h^{-1}(a\lambda /t))^{-d-m}$ by using $(h^{-1}(t^{-1}))^{-d-m}$. Indeed, when $\delta_{1}=0$, by using \eqref{H:s} and $\ell (r^{-1}) \asymp h(r)$ only, we obtain
$$
(h^{-1}(a\lambda/t))^{-d-m} \leq  C_{0}(a,\lambda,d,m,T_{0}) (h^{-1}(t^{-1}))^{-d-m} \quad t>T_{0},
$$
and $C_{0}$ may blow up as $T_{0}\downarrow0$. 
\end{remark}

\mysection{Estimation of solution: Calder\'on-Zygmund approach}\label{sec 10.05.15:24}

In this section we prove some a priori estimates for solutions to the  equation with zero initial condition
\begin{equation}\label{mainequation-1}
\partial_{t}u = \mathcal{L}u + f,\quad t>0\,; \quad u(0,\cdot)=0.
\end{equation}
{\it Throughout this section, we suppose that $j_{d}$ satisfies $\bold{H}(d,4)$.} We first provide the representation formula. 

\begin{lemma} \label{u=qfsolution}

(i) Let $f\in C_c^\infty(\bR_+^{d+1})$ and define $u$ as 
\begin{equation} \label{u=qf}
\begin{gathered}
u(t,x)=\int_{0}^{t} \int_{\bR^{d}} p_{d}(t-s,x-y) f(s,y) dy ds.
\end{gathered}
\end{equation}
Then $u$ satisfies equation \eqref{mainequation-1} for each $(t,x)$.

(ii) Let $u\in C_c^\infty(\bR_+^{d+1})$ and denote $f:=\partial_{t}u-\mathcal{L}u$. Then $u$ and $f$ satisfies \eqref{u=qf}.
\end{lemma}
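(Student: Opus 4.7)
The plan is to treat this as Duhamel's principle for the semigroup $P_{t}g(x):=\int_{\bR^{d}}p_{d}(t,x-y)g(y)\,dy$, using two core ingredients: (a) the forward Kolmogorov equation $\partial_{t}p_{d}(t,\cdot)=\mathcal{L}p_{d}(t,\cdot)$ for $t>0$, which follows from \eqref{eqn 7.20.1} by differentiating under the integral sign (legitimate by \eqref{eqn 06.28.11:33} and the heat-kernel derivative bounds of Section \ref{sec3}); and (b) the approximation of the identity $P_{\tau}g\to g$ uniformly as $\tau\downarrow0$ whenever $g$ is smooth and compactly supported, which follows from the mass-preserving and unimodal nature of $p_{d}(\tau,\cdot)$ together with its concentration at the origin.

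For part (i), I would write $u(t,x)=\int_{0}^{t}P_{t-s}f(s,\cdot)(x)\,ds$. Splitting the $s$-integral as $\int_{0}^{t-\varepsilon}+\int_{t-\varepsilon}^{t}$ and using the smoothness of $f$ plus $P_{\tau}f(s,\cdot)\to f(s,\cdot)$ uniformly, differentiating in $t$ yields
\[
\partial_{t}u(t,x)=f(t,x)+\int_{0}^{t}\partial_{t}P_{t-s}f(s,\cdot)(x)\,ds=f(t,x)+\int_{0}^{t}\mathcal{L}P_{t-s}f(s,\cdot)(x)\,ds,
\]
where the last equality uses (a). It then suffices to show $\mathcal{L}u(t,x)=\int_{0}^{t}\mathcal{L}P_{t-s}f(s,\cdot)(x)\,ds$, i.e., that $\mathcal{L}$ commutes with the $s$-integral. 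This is the main technical point. I would verify it by cutting off $s$ away from $t$ and using the bound on $|\mathcal{L}_{x}p_{d}(t-s,x-y)|$ provided by Lemma \ref{pfracderivativeestimate} (for small times) and Theorem \ref{thm 09.13.17:01} (for large times), which after integration in $y$ against the compactly supported $f$ is integrable in $s\in(0,t-\varepsilon)$, and then passing $\varepsilon\downarrow 0$ using the smoothness of $f$ near $s=t$. The initial condition $u(0,\cdot)=0$ is immediate from the formula.

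For part (ii), the natural candidate is to compute $\partial_{s}\bigl[P_{t-s}u(s,\cdot)(x)\bigr]$ for $0<s<t$. By the chain rule together with (a) and the translation invariance $p_{d}(\tau,x-y)=p_{d}(\tau,y-x)$, one obtains
\[
\partial_{s}P_{t-s}u(s,\cdot)(x)=-\mathcal{L}P_{t-s}u(s,\cdot)(x)+P_{t-s}\partial_{s}u(s,\cdot)(x)=P_{t-s}\bigl(\partial_{s}u-\mathcal{L}u\bigr)(s,\cdot)(x)=P_{t-s}f(s,\cdot)(x),
\]
where moving $\mathcal{L}$ from $p_{d}$ onto $u$ is justified by Fubini applied to the representation \eqref{eqn 09.28.11:26}, the compact support of $u(s,\cdot)$, and the bounds on $j_{d}$. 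Integrating in $s$ over $(0,t)$ and using $\lim_{s\uparrow t}P_{t-s}u(s,\cdot)(x)=u(t,x)$ (by (b) and the uniform continuity of $u$ in $s$) together with $P_{t}u(0,\cdot)\equiv 0$ (since $u\in C_{c}^{\infty}(\bR_{+}^{d+1})$ forces $u(0,\cdot)\equiv 0$) gives exactly \eqref{u=qf}.

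The main obstacle in both parts is the rigorous interchange of $\mathcal{L}$ with the $s$-integral and with the $t$-derivative, because $\mathcal{L}p_{d}(\tau,\cdot)$ is only $O(\tau^{-1})$ in the $L^{1}(dx)$-norm (by Lemma \ref{pfracderivativeestimate} combined with Lemma \ref{rmk 05.07.16:12} (ii)), exactly at the borderline of integrability in $\tau$ on $(0,t)$. I would handle this by the standard cutoff trick: restrict first to $s\in(0,t-\varepsilon)$, where all dominations are uniform and Fubini/differentiation under the integral sign apply directly, and then pass $\varepsilon\downarrow 0$ using the smoothness and compact support of $f$ (respectively $u$), which makes the boundary contribution converge to $f(t,x)$ (respectively $u(t,x)$) by (b).
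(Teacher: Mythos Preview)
Your argument is correct. For part (i) you are spelling out Duhamel's principle, which is exactly what the paper invokes (it simply cites Evans). Your worry about the borderline $O(\tau^{-1})$ behaviour of $\|\mathcal{L}p_{d}(\tau,\cdot)\|_{L_1}$ is a bit overcautious: once you use the commutation $\mathcal{L}P_{t-s}f(s,\cdot)=P_{t-s}\mathcal{L}f(s,\cdot)$ (which you already invoke in part (ii), and which is immediate on the Fourier side or by Fubini in \eqref{eqn 09.28.11:26} with the bound $|y|^{2}\wedge 1$), the integrand is dominated by $\|\mathcal{L}f\|_{L_\infty}$ uniformly in $s\in(0,t)$. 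Then both the Leibniz rule for $\partial_t$ and the interchange of $\mathcal{L}$ with $\int_0^t$ go through by a fixed majorant, and the $\varepsilon$-cutoff is unnecessary.

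For part (ii) your route is genuinely different from the paper's. The paper takes the spatial Fourier transform, turning the equation into the scalar ODE $\partial_t\hat u(t,\xi)=-\psi(|\xi|)\hat u(t,\xi)+\hat f(t,\xi)$ with $\hat u(0,\xi)=0$; the integrating-factor solution $\hat u(t,\xi)=\int_0^t e^{-(t-s)\psi(|\xi|)}\hat f(s,\xi)\,ds$ is then inverted via \eqref{eqn 7.20.1} and Parseval to give \eqref{u=qf}. Your approach---differentiating $s\mapsto P_{t-s}u(s,\cdot)(x)$ and integrating over $(0,t)$---is the standard semigroup derivation of Duhamel's formula and is equally valid. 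It has the merit of staying in physical space, but it relies on the commutation $\mathcal{L}P_\tau=P_\tau\mathcal{L}$ and on the endpoint limits $P_{t-s}u(s,\cdot)\to u(t,\cdot)$ and $P_{t}u(0,\cdot)=0$; the Fourier argument sidesteps all heat-kernel considerations and needs only that $\hat u,\hat f$ are continuous in $t$, which is immediate from $u\in C_c^\infty(\bR_+^{d+1})$.
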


\begin{proof}
(i) By following Duhamel's principle for the heat equation (see e.g. \cite{evanspde}), we can easily prove the assertion.

(ii) Since $u\in \Ccinf(\bR^{d+1})$, we can check that $\partial_{t}u(t,x)$ and $\mathcal{L}u(t,x)$ are integrable in $x$. Hence, by taking the Fourier transform, we have
$$
\partial_{t}\cF_{d}(u)(t,\xi) = -\psi(|\xi|)\cF_{d}(u)(t,\xi) + \cF_{d}(f)(t,\xi)
$$
for any $(t,\xi)\in \bR^{d+1}_{+}$. Therefore, using the smoothness of $u$ again, we get
$$
\cF_{d}(u)(t,\xi) = \int_{0}^{t} \cF_{d}(f)(s,\xi) e^{-(t-s)\psi(|\xi|)}ds.
$$
Finally, by taking the inverse Fourier transform, Fubini's theorem, Parseval's identity, and \eqref{eqn 7.20.1}, we prove that $u$ satisfies \eqref{u=qf}. The lemma is proved.

\end{proof}

Throughout the rest of the article, we denote $\mathcal{L}p_{d}(t,x):= q(t,x)$.
\\
For $f\in \Ccinf(\bR^{d+1})$, we define 
\begin{align}
L_{0}f(t,x) := \int_{-\infty}^{t}\int_{\bR^{d}} p_{d}(t-s,x-y)f(s,y) dy ds,
\nn\\
Lf(t,x) := \int_{-\infty}^{t}\int_{\bR^{d}} p_{d}(t-s,x-y)\mathcal{L}f(s,y) dy ds,\label{def_Lf}
\end{align}
where $\mathcal{L}f(s,y):= \mathcal{L}(f(s,\cdot))(y)$.

Note that  
$\mathcal{L}f$ is bounded for any $f\in C_c^\infty (\bR^{d+1})$. Thus, the operator $L$ is well defined on $C_c^\infty (\bR^{d+1})$.
For each fixed $s$ and $t$ such that  $s<t$, define
\begin{align*}
\Lambda^{0}_{t,s}f(x) &:=\int_{\bR^d}p_{d}(t-s,x-y) \mathcal{L} f(s,y) dy,\\
\Lambda_{t,s}f(x) &:=\int_{\bR^d} \mathcal{L}p_{d}(t-s,x-y)  f(s,y) dy.
\end{align*}
If we use Theorem \ref{cor 09.13.16:58} and  Lemma \ref{rmk 05.07.16:12} (i) (if $\ell$ satisfies Assumption \ref{ell_con} (i)) or Theorem \ref{thm 08.24.16:27} and Lemma\ref{rmk 05.07.16:12} (ii) (if $\ell$ satisfies Assumption \ref{ell_con} (ii)), with Minkowski's inequality,  we can check that $\Lambda_{t,s}f$ and $\Lambda^{0}_{t,s}f$ are square integrable. Moreover, from the definition of $\mathcal{L}$, we have
\begin{equation*}
\begin{aligned}
\cF_{d}\{\mathcal{L}p_{d}(t-s,\cdot)\}(\xi) &= -\psi(|\xi|) \cF_{d}\{p_{d}(t-s,\cdot)\}(\xi).
\end{aligned}
\end{equation*}
Hence,
\begin{equation*}\label{eqn 06.12.15:17}
\begin{aligned}
\cF_d \{\Lambda^{0}_{t,s}f\}(\xi)  = -\psi(|\xi|)  \cF_{d}p_{d}(t-s,\xi) \hat{f}(s,\xi)
=\mathcal{F}_{d}\{\mathcal{L}p_{d}(t-s,\cdot)\}(\xi)\hat{f}(s,\xi)=\cF_d \{\Lambda_{t,s}f\}(\xi).
\end{aligned}
\end{equation*}
Thus, we have 
\begin{align}\label{eqn 06.02.15:23}
\mathcal{L}L_{0}f(t,x) = Lf(t,x)  \nonumber
&= \lim_{\varepsilon \downarrow 0} \int_{-\infty}^{t-\varepsilon}\left(\int_{\bR^{d}} \mathcal{L}p_{d}(t-s,x-y) f(s,y) dy \right) ds  
\\
&= \lim_{\varepsilon \downarrow 0} \int_{-\infty}^{t-\varepsilon}\left(\int_{\bR^{d}}q(t-s,x-y) f(s,y) dy \right) ds.
\end{align}

\begin{remark}\label{rmk 08.31.14:17}
Our approach to obtain estimation \eqref{eqn 05.27.14:12} is based on $L_{p}$-boundedness of linear operators. Shortly, we will prove $L_{2}$-boundedness of the operator $L$ in Lemma \ref{22estimate}, and then we will use Lemma \ref{ininestimate} ~ Lemma \ref{outinspaceestimate} to get BMO-$L_{\infty}$ estimation (Theorem \ref{thm 05.14.18:28} (i)) of the operator $L$. Finally, using properties of $q$, we prove the main result (Theorem \ref{thm 05.14.18:28} (ii)) in this section. For this reason, we need estimation of $\partial_{t}q = \mathcal{L}^{2}p_{d}$. In Section \ref{sec3}, we derived estimation of $\mathcal{L}^{2}p_{d}$ under the condition that $j_{d}$ satisfies Assumption \ref{asm 07.10.16:46} 4-times. Hence, to get estimations of solutions, we need 4-times differentiability on $j_{d}$. 

\end{remark}

For $(t,x)\in\R^{d+1}$ and $b>0$,  denote
\begin{equation}\label{eqn 09.07.16:22}
 Q_b(t,x)=(t-(h(b))^{-1},\,t+(h(b))^{-1})\times {B}_b(x),
\end{equation}
and
\begin{equation*}
Q_b=Q_b(0,0), \quad B_b=B_b(0).
\end{equation*}
If $\ell$ satisfies Assumption \ref{ell_con} (ii)--(1), then we define $Q_{b}$ by using 
$$
\tilde{h}(b) = (a\lambda)^{-1}h(b), \quad \lambda>1, \quad a\geq \alpha_{2,0}\vee \alpha_{1,1}
$$ 
(see Remark \ref{r:hku_largetime} (ii)) in place of $h$. However, for notational simplicity, we still denote $\tilde{h}$ by $h$ in \eqref{eqn 09.07.16:22}. 

For  measurable subsets $Q\subset \R^{d+1}$ with  finite measure and  locally integrable functions $f$, define
\begin{equation*}
f_Q=\aint_{Q}f(s,y)dyds=\frac{1}{|Q|}\int_{Q}f(s,y)dyds,
\end{equation*}
where $|Q|$ is the Lebesgue measure of $Q$.

Recall that we assume that Assumption \ref{ass bernstein} and Assumption \ref{ell_con}. We will use the following lemma in the rest of this section. 
\begin{lemma}\label{l:qest_case1} 
There exist $C_{1}=C_{1}(d,\text{\boldmath{$\kappa$}}_{3},\text{\boldmath{$\delta$}},\ell)$ and $C_{2}=C_{2}(d,\text{\boldmath{$\kappa$}}_{2},\text{\boldmath{$\delta$}},\ell)$ such that for any $b>0$
\begin{align}
&\int_{(h(b))^{-1}}^\infty \int_{|y|\ge b } |D_xq(s,y)| dy ds\le C_{1}b^{-1},\label{q1upper}\\
&\int_{(h(4b))^{-1}}^{\infty} \int_{|y|\leq 4b} |q(s,y)| dy ds\le C_{2}.\label{q0upper}
\end{align}
\end{lemma}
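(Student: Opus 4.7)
My plan is to invoke the kernel bounds from Section \ref{sec3}, treating the small--time regime via Theorem \ref{thm 08.24.16:27} (respectively Theorem \ref{cor 09.13.16:58} under Assumption \ref{ell_con} (i)) and the large--time regime $s\ge t_{k,m}$ via Theorem \ref{thm 09.13.17:01}. Under Assumption \ref{ell_con} (ii)--(1), Remark \ref{r:hku_largetime} (ii) allows me to apply the ``large--time'' form of the bound for \emph{all} $s>0$ after replacing $h$ by $\tilde h=(a\lambda)^{-1}h$, which streamlines the argument; under (ii)--(2) and (i) I must genuinely split into two time regimes.

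For \eqref{q1upper}, note that $D_xq=\mathcal{L}D_xp_d$, so I take $k=m=1$. In the small--time piece the estimate $|D_xq(s,y)|\le C\,K(\theta)/\theta^{d+1}\,e^{-sh(\theta)/C}$ with $\theta\ge |y|\ge b$ holds. Integrating in $s$ first (extending the upper limit to $+\infty$) gives a factor $C/h(\theta)$; using $K\le h$ and $\theta\ge |y|\ge b$ collapses $K(\theta)/\theta^{d+1}h(\theta)$ to $|y|^{-d-1}$, which integrates over $\{|y|\ge b\}$ to $Cb^{-1}$. In the large--time piece, Theorem \ref{thm 09.13.17:01} produces two indicator terms. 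For the first term ($sh(|y|)\ge 1$, i.e.\ $|y|\le h^{-1}(s^{-1})$) I integrate in $y$ first to obtain $Cs^{-1}(h^{-1}(s^{-1}))^{-1}$, then substitute $r=h^{-1}(s^{-1})$ to turn the $s$--integral into $\int_b^\infty 2K(r)/(r^2h(r))\,dr\le \int_b^\infty 2r^{-2}\,dr=2/b$. For the second term ($sh(|y|)\le 1$) I swap the order of integration, noting that the admissible $s$--interval has length $\le 1/h(|y|)$, so that the bound $CK(|y|)/|y|^{d+1}\cdot 1/h(|y|)\le C|y|^{-d-1}$ integrates over $\{|y|\ge b\}$ to $Cb^{-1}$ again.

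For \eqref{q0upper}, my key intermediate claim is that
\[
 |q(s,y)|\le C\,s^{-1}\,(h^{-1}(s^{-1}))^{-d}\qquad\text{for}\;\;|y|\le 4b,\;\;s\ge (h(4b))^{-1}.
\]
Under (ii)--(1) this is immediate from Remark \ref{r:hku_largetime} (ii) (with $\tilde h$ in place of $h$). Under (ii)--(2) and (i) I combine the two heat kernel theorems: on $\{|y|\le 4b\}\cap\{s\ge (h(4b))^{-1}\}$ one has $sh(|y|)\ge sh(4b)\ge 1$, so Theorem \ref{thm 09.13.17:01} gives the claim directly for $s\ge t_{1,0}$; for $s\le t_{1,0}$ I use Theorem \ref{thm 08.24.16:27} together with $K\le h$, $h(\theta)e^{-sh(\theta)/C}\le C/s$ and $\theta_a\ge (\ell^{-1}(a/s))^{-1}$, invoking \eqref{eqn 08.12.17:59} in case (ii)--(2) and the logarithmic behaviour $h(r)\asymp\log(1/r)$ near $0$ in case (i). Once the pointwise claim is in hand, the spatial integration yields
\[
 \int_{|y|\le 4b}|q(s,y)|\,dy\le C(4b)^d\,s^{-1}(h^{-1}(s^{-1}))^{-d},
\]
and the change of variable $r=h^{-1}(s^{-1})$ transforms the remaining $s$--integration into
\[
 C(4b)^d\int_{4b}^\infty \frac{2K(r)}{r^{d+1}h(r)}\,dr\le C(4b)^d\int_{4b}^\infty 2r^{-d-1}\,dr=\frac{2C}{d},
\]
which is the desired constant independent of $b$.

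The main obstacle is verifying the pointwise bound for $|q|$ in the small--time regime \emph{without} the simplification $h\asymp\ell$ that is available in case (ii)--(1). In case (ii)--(2) this requires the scaling assumption \eqref{eqn 08.12.17:59} precisely to absorb the exponential $e^{-sh(\theta_a)/C}$ when $\theta_a=(\ell^{-1}(a/s))^{-1}$, since otherwise the ratio $sh(\theta_a)$ is not a priori of order $1$. In case (i), the analogous difficulty is that $K(r)/r^d$ is not integrable near $r=0$, so one must exploit the extra decay $e^{-sh(|y|)/C}\asymp |y|^{s/C}$ coming from $h(r)\asymp\log(1/r)$; substituting $u=sh(|y|)$ produces an integral of the form $\int_{1/C}^\infty e^{-u}/u\,du$, whose finiteness closes the estimate. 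The change of variable $r=h^{-1}(s^{-1})$ is the unifying device that converts the otherwise awkward $s$--integration into an elementary power--law integral.
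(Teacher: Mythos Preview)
Your overall strategy---split into small-time and large-time regimes, use the kernel estimates from Section~\ref{sec3}, and reduce the final time integral via the substitution $r=h^{-1}(s^{-1})$---matches the paper's spirit. However, there is a genuine gap in your handling of \eqref{q0upper} in cases (i) and (ii)--(2). Your ``key intermediate claim'' $|q(s,y)|\le Cs^{-1}(h^{-1}(s^{-1}))^{-d}$ on $\{|y|\le 4b,\ s\ge (h(4b))^{-1}\}$ is false. In case (i) one has $K(r)\asymp 1$ and $h(r)\asymp\log(1/r)$ for small $r$, so $K(|y|)|y|^{-d}e^{-sh(|y|)/C}\asymp |y|^{s/C'-d}\to\infty$ as $|y|\to 0$ whenever $s<dC'$, while the right side is finite. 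Even the integrated version $\int_{|y|\le 4b}|q(s,y)|\,dy\le C(4b)^d s^{-1}(h^{-1}(s^{-1}))^{-d}$ fails: the substitution $u=sh(\rho)$ you allude to gives $\int_{|y|\le 4b}|q|\,dy\asymp s^{-1}e^{-C_2 sh(4b)}$, and one checks that $e^{-C_2 sh(4b)}\le C(4b)^d(h^{-1}(s^{-1}))^{-d}$ does \emph{not} hold uniformly over $s\in[(h(4b))^{-1},t_{1,0}]$ for small $b$. Your obstacle paragraph correctly senses the difficulty but the fix you suggest pertains to the full double integral, not to the intermediate claim you actually invoke.

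The paper bypasses any such pointwise or single-$s$ bound. For the small-time piece it factors $e^{-sh(|y|)/C}\le e^{-sh(4b)/(2C)}e^{-sh(|y|)/(2C)}$ (valid since $h(|y|)\ge h(4b)$), integrates in $y$ via Lemma~\ref{rmk 05.07.16:12} to obtain $Cs^{-1}e^{-sh(4b)/(2C)}$, and then computes $\int_{(h(4b))^{-1}}^\infty s^{-1}e^{-sh(4b)/(2C)}\,ds=\int_1^\infty u^{-1}e^{-u/(2C)}\,du<\infty$ directly (see \eqref{eqn 09.15.14:24} and Lemma~\ref{lem 09.21.12:37}). A smaller but related issue occurs in your treatment of \eqref{q1upper} under (ii)--(2): the step ``integrating in $s$ first gives $C/h(\theta)$'' treats $\theta_a(|y|,s)$ as $s$-independent, which is invalid on the region $\theta_a=(\ell^{-1}(a/s))^{-1}$. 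The paper handles that region separately by using $\theta_a^{-d-1}\le|y|^{-d-1}$ and then \eqref{eqn 08.12.17:59} to absorb $K(\theta_a)e^{-sh(\theta_a)/C}$ into a constant, leaving a trivial integral bounded by $Ct_{1,1}b^{-1}$.
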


\begin{proof}
 We will consider two cases separately.

\noindent{\bf (Case 1)} $\ell$ satisfies Assumption \ref{ell_con} (i).

In this case, \eqref{q0upper} is equal to \eqref{eqn 09.14.11:27} whose proof is contained in Lemma \ref{l:qest_case1_pf}. Hence, we only consider \eqref{q1upper}. Observe that, if we take $t_{1,1}$ from Theorem \ref{thm 09.13.17:01}
\begin{equation}\label{eqn 09.06.19:51}
\begin{aligned}
\int_{(h(b))^{-1}}^\infty \int_{|y| \geq b} |D_xq(s,y)| dy ds
&\leq  \int_{(h(b))^{-1}}^\infty \int_{b\leq |y| \leq h^{-1}(s^{-1})} \cdots dy ds
+\int_{(h(b))^{-1}}^\infty \int_{|y| \geq h^{-1}(s^{-1})}   \cdots dy ds
\\
&=:I_1+I_2.
\end{aligned}
\end{equation}
By Lemma \ref{l:qest_case1_pf}, we obtain $I_1 \leq Cb^{-1}$.  
By Theorem \ref{cor 09.13.16:58} and Theorem \ref{thm 09.13.17:01},
$$
I_2   =\int_{(h(b))^{-1}}^{\infty}\int_{|y| \geq h^{-1}(s^{-1})} \frac{K(|y|)}{|y|^{d+1}} \left( e^{-Cth(|y|)} \mathbf{1}_{s \leq t_{1,1} } + \mathbf{1}_{s \geq t_{1,1} } \right)  dy ds.
$$
Hence, using \eqref{eqn 08.07.16:31}, we have
\begin{equation}\label{eqn 08.30.11:05}
\begin{aligned}
I_2  & =\int_{(h(b))^{-1}}^{\infty}\int_{|y| \geq h^{-1}(s^{-1})} \frac{K(|y|)}{|y|^{d+1}}dy ds
 \leq C\int_{(h(b))^{-1}}^{\infty} h^{-1}(s^{-1})^{-1}\int_{|y| \geq h^{-1}(s^{-1})} \frac{K(|y|)}{|y|^{d}} dy ds
\\
& \leq C\int_{(h(b))^{-1}}^{\infty} h^{-1}(s^{-1})^{-1}s^{-1}ds.
\end{aligned}
\end{equation}
Thus, by Lemma \ref{l:qest_offd} with $f(r)=h(r^{-1})$ and $k=1$, we obtain $I_{2} \le Cb^{-1}$.

\noindent{\bf (Case 2)} $\ell$ satisfies the Assumption \ref{ell_con} (ii).

First, we consider \eqref{q1upper}. Observe that 
\begin{equation*}
\begin{aligned}
\int_{(h(b))^{-1}}^\infty \int_{|y| \geq b} |D_xq(s,y)| dy ds
&\leq  \int_{(h(b))^{-1}}^\infty \int_{b\leq |y| \leq h^{-1}(s^{-1})}   \cdots  dy ds
 +\int_{(h(b))^{-1}}^\infty \int_{|y| \geq h^{-1}(s^{-1})} \mathbf{1}_{s \leq t_{1,1} }  \cdots dy ds
\\
&\quad +\int_{(h(b))^{-1}}^\infty \int_{|y| \geq h^{-1}(s^{-1})} \mathbf{1}_{s \geq t_{1,1} } \cdots  dy ds
\\
&=:I_4+I_5 + I_{6},
\end{aligned}
\end{equation*}
where $t_{1,1}$ is taken from Theorem \ref{thm 09.13.17:01}.

Suppose that $\ell$ satisfies Assumption \ref{ell_con} (ii)--(2).  Like \eqref{eqn 09.06.19:51}
 Using \eqref{q1upper_case2_pf}, we have $I_{4}\leq Cb^{-1}$. Also, by Theorem \ref{thm 08.24.16:27} and Lemma \ref{rmk 05.07.16:12} (ii)
\begin{align*}
I_{5} &\leq  C \int_{(h(b))^{-1}}^{\infty} \int_{|y| \geq h^{-1}(s^{-1})} \frac{K(\theta_{a}(|y|,s))}{\theta_{a}(|y|,s)^{d+1}} dy ds
\\
&\leq  C \int_{(h(b))^{-1}}^{\infty} s^{-1} \int_{|y| \geq h^{-1}(s^{-1})} s \frac{K(\theta_{a}(|y|,s))}{\theta_{a}(|y|,s)^{d}}|y|^{-1} dy ds
\\
&\leq C \int_{(h(b))^{-1}}^{\infty} h^{-1}(s^{-1})^{-1} s^{-1} \int_{|y| \geq h^{-1}(s^{-1})} s \frac{K(\theta_{a}(|y|,s))}{\theta_{a}(|y|,s)^{d}} dy ds
\\
&\leq C  \int_{(h(b))^{-1}}^{\infty} h^{-1}(s^{-1})^{-1} s^{-1}ds \leq Cb^{-1},
\end{align*}
where $a\geq \alpha_{1,1}$ comes from Theorem \ref{thm 08.24.16:27} and we used Lemma \ref{l:qest_offd} with $f(r)=h(r^{-1})$ and $k=1$ to obtain the last inequality. Moreover, by following the argument in \eqref{eqn 08.30.11:05}, we have $I_{6} \leq Cb^{-1}$. Hence we have \eqref{q1upper} when $\ell$ satisfies Assumption \ref{ell_con} (ii)--(2).

Suppose that $\ell$ satisfies Assumption \ref{ell_con} (ii)--(1). Then, due to Remark \ref{r:hku_largetime} (ii) and Lemma \ref{l:qest_offd} with $f(r)=h(r^{-1})$ and $k=1$ we have
\begin{align*}
I_{4}
\leq C \int_{(h(b))^{-1}}^{\infty}\int_{b\leq |y| \leq h^{-1}(s^{-1})} s^{-1} (h^{-1}(s^{-1}))^{-d-1} dy ds
\leq C\int_{(h(b))^{-1}}^{\infty}s^{-1}(h^{-1}(s^{-1}))^{-1} ds \leq Cb^{-1}.
\end{align*}
Also, since $I_{5}$ and $I_{6}$ can be handled in the exactly same way as above, we again have \eqref{q1upper}.

Now, consider \eqref{q0upper}. If $\ell$ satisfies Assumption \ref{ell_con} (ii)--(2), then by Lemma \ref{lem 09.21.12:37} we already have the desired result. Hence, we only need to consider the case where $\ell$ satisfies Assumption \ref{ell_con} (ii)--(1). Again by Remark \ref{r:hku_largetime} (ii)
\begin{align*}
\int_{(h(4b))^{-1}}^\infty \int_{|y|\leq 4b} |q(s,y)| dy ds 
&\leq C \int_{(h(4b))^{-1}}^\infty \int_{0}^{4b} s^{-1}(h^{-1}(s^{-1}))^{-d}\mathbf{1}_{sh(\rho) \geq 1}  \rho^{d-1}d\rho ds   \nonumber
\\
&\leq C \int_{(h(4b))^{-1}}^\infty \int_{0}^{4b}  s^{-1}(h^{-1}(s^{-1}))^{-d} \rho^{d-1}d\rho ds
\\
&\leq Cb^{d} \int_{(h(4b))^{-1}}^{\infty}  s^{-1}(h^{-1}(s^{-1}))^{-d} ds \leq C,
\end{align*}
where we used Lemma \ref{l:qest_offd} with $f(r)=h(r^{-1})$. The lemma is proved.
\end{proof}

Recall that the operator $L$ defined in \eqref{def_Lf} satisfies \eqref{eqn 06.02.15:23}.
 
\begin{lemma} \label{22estimate}
For any $f\in\Ccinf(\bR^{d+1})$, we have
\begin{equation*}
\begin{gathered}
\|L f\|_{L_2(\bR^{d+1})}\leq C(d) \|f\|_{L_2(\bR^{d+1})}.
\end{gathered}
\end{equation*}
Consequently, the  operators $L$ can be  continuously extended to $L_2(\bR^{d+1})$. 
\end{lemma}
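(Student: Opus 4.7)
The plan is to prove the $L_2$-estimate via Plancherel's theorem, treating $L$ as a Fourier multiplier operator in both the spatial and time variables. Since $f\in C_c^\infty(\bR^{d+1})$, all formal manipulations will be easily justified.

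First, taking the $x$-Fourier transform of $Lf(t,x)$ and using the fact that the $x$-Fourier transform of $p_d(t-s,\cdot)$ is $e^{-(t-s)\psi(|\xi|)}$ together with $\mathcal{F}_d(\mathcal{L}f)(s,\xi) = -\psi(|\xi|)\hat f(s,\xi)$, I obtain
\begin{equation*}
\mathcal{F}_d(Lf)(t,\xi) = \int_{-\infty}^{t} -\psi(|\xi|)\, e^{-(t-s)\psi(|\xi|)} \hat f(s,\xi)\, ds = (m_\xi * \hat f(\cdot,\xi))(t),
\end{equation*}
where $m_\xi(u) := -\psi(|\xi|)\, e^{-u\psi(|\xi|)} \mathbf{1}_{u>0}$ (convolution in $t$, for each fixed $\xi$).

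Next, I compute the temporal Fourier transform of $m_\xi$:
\begin{equation*}
\widehat{m_\xi}(\tau) = \int_0^\infty -\psi(|\xi|)\, e^{-u\psi(|\xi|)} e^{-i\tau u}\, du = \frac{-\psi(|\xi|)}{\psi(|\xi|) + i\tau},
\end{equation*}
whose modulus satisfies $|\widehat{m_\xi}(\tau)| = \psi(|\xi|)/\sqrt{\psi(|\xi|)^2 + \tau^2} \leq 1$ uniformly in $(\tau,\xi)$.

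Finally, applying Plancherel's theorem first in $x$, then in $t$, and using the uniform bound on the multiplier:
\begin{equation*}
\|Lf\|_{L_2(\bR^{d+1})}^2 = C(d)\int_{\bR}\!\!\int_{\bR^d} |\mathcal{F}_d(Lf)(t,\xi)|^2\, d\xi\, dt
= C(d)\int_{\bR^d}\!\!\int_{\bR} |\widehat{m_\xi}(\tau)|^2\, |\mathcal{F}_t\hat f(\tau,\xi)|^2\, d\tau\, d\xi
\leq \|f\|_{L_2(\bR^{d+1})}^2,
\end{equation*}
which establishes the estimate with constant depending only on $d$. The continuous extension to $L_2(\bR^{d+1})$ then follows by density of $C_c^\infty(\bR^{d+1})$. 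There is no real obstacle in this argument; the only point requiring care is verifying that the convolution representation holds pointwise for $f\in C_c^\infty$, which follows immediately from the smoothness and decay of $f$ and the integrability of $p_d$ and its Fourier transform.
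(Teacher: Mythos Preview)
Your proof is correct and takes essentially the same approach as the paper: both show that $L$ is a Fourier multiplier operator in $\bR^{d+1}$ with bounded symbol and then invoke Plancherel. The paper bounds the $(d+1)$-dimensional Fourier transform of $q=\mathcal{L}p_d$ directly by $\int_0^\infty \psi(|\xi|)e^{-t\psi(|\xi|)}\,dt = 1$, while you compute the symbol exactly as $-\psi(|\xi|)/(\psi(|\xi|)+i\tau)$ and bound its modulus by $1$; these are the same argument with your version being slightly more explicit.
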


\begin{proof}
Using \eqref{eqn 7.20.1}, and the definition of $\mathcal{L}$, we have
\begin{align*}
|\mathcal{F}_{d+1}(q)(\tau,\xi)| &=\left| C \int_{\bR} \psi(|\xi|) e^{-t\psi(|\xi|)} e^{-it \tau} dt  \right| 
\leq  C \int_{\bR} \psi(|\xi|) e^{-t\psi(|\xi|)}  dt   \leq C.
\end{align*}
This implies that the operator $L$ has a bounded Fourier multiplier. From this, we directly get the desired result.
\end{proof}

\begin{lemma} \label{ininestimate}
Let $b>0$ and  $f\in C^{\infty}_c(\bR^{d+1})$  have a support in   $ (-3(h(b))^{-1}, 3(h(b))^{-1})\times B_{3b}$. Then,
\begin{equation*}
\begin{gathered}
\aint_{Q_b}|L f (t,x)|dxdt \leq C(d) \|f\|_{L_\infty(\bR^{d+1})}.
\end{gathered}
\end{equation*}
\end{lemma}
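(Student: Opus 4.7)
The plan is to combine the $L_{2}$-boundedness of $L$ from Lemma \ref{22estimate} with Hölder's inequality on the cube $Q_{b}$, exploiting the fact that $f$ is supported in a set whose Lebesgue measure is a dimensional constant multiple of $|Q_b|$.

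First, I would apply Hölder's (Cauchy--Schwarz) inequality on $Q_{b}$ to obtain
\begin{equation*}
\aint_{Q_{b}}|Lf(t,x)|\,dx\,dt \leq \left(\aint_{Q_{b}}|Lf(t,x)|^{2}\,dx\,dt\right)^{1/2} = |Q_{b}|^{-1/2}\,\|Lf\|_{L_{2}(Q_{b})}.
\end{equation*}
Next, I would estimate the $L_2$ norm of $Lf$ on $Q_b$ by its norm on all of $\bR^{d+1}$ and apply the $L_{2}$-continuity of $L$ from Lemma \ref{22estimate}:
\begin{equation*}
\|Lf\|_{L_{2}(Q_{b})} \leq \|Lf\|_{L_{2}(\bR^{d+1})} \leq C(d)\,\|f\|_{L_{2}(\bR^{d+1})}.
\end{equation*}

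The key observation is that since $f$ is supported in $E_{b}:=(-3(h(b))^{-1},3(h(b))^{-1})\times B_{3b}$, we have
\begin{equation*}
\|f\|_{L_{2}(\bR^{d+1})} \leq \|f\|_{L_{\infty}(\bR^{d+1})}\,|E_{b}|^{1/2},
\end{equation*}
and the comparison $|E_{b}|/|Q_{b}| = 6(h(b))^{-1}|B_{3b}|/(2(h(b))^{-1}|B_{b}|) = 3\cdot 3^{d}$ depends only on the dimension. Combining these three steps gives
\begin{equation*}
\aint_{Q_{b}}|Lf(t,x)|\,dx\,dt \leq |Q_{b}|^{-1/2}\cdot C(d)\cdot \|f\|_{L_{\infty}(\bR^{d+1})}\cdot (3\cdot 3^{d}\,|Q_{b}|)^{1/2} \leq C(d)\,\|f\|_{L_{\infty}(\bR^{d+1})},
\end{equation*}
which is the claimed bound. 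I do not anticipate any real obstacle here; the only subtlety is that the function $h$ (and hence the definition of $Q_b$) depends on $b$, but the ratio $|E_{b}|/|Q_{b}|$ is purely dimensional because both time intervals scale with $(h(b))^{-1}$ while the spatial balls scale with $b$, so the time factor cancels and only the ball radius ratio $3^d$ remains.
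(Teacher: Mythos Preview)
Your proof is correct and follows essentially the same approach as the paper: apply H\"older's inequality on $Q_b$, invoke the $L_2$-boundedness of $L$ from Lemma \ref{22estimate}, and then use the support hypothesis together with the dimensional comparison $|E_b|/|Q_b|=3\cdot 3^{d}$ to pass from $\|f\|_{L_2}$ to $\|f\|_{L_\infty}$. The paper's argument is slightly terser but identical in substance.
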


\begin{proof}
By H\"older's inequality and Lemma \ref{22estimate},
\begin{align*}
\aint_{Q_b}|L f (t,x)|dxdt \leq &  \;\|L f\|_{L_2(\bR^{d+1})}|Q_b|^{-1/2}
\leq  \;C \|f\|_{L_2(\bR^{d+1})}|Q_b|^{-1/2}
\\
= & \;\left( \int_{-3(h(b))^{-1}}^{3(h(b))^{-1}} \int_{B_{3b}} |f(t,x)|^{2} dydt \right)^{1/2}|Q_b|^{-1/2}
\leq  C \|f\|_{L_\infty(\bR^{d+1})}.
\end{align*}
The lemma is proved.
\end{proof}

\begin{lemma} \label{inwholeestimate} 
Let $b>0$, and $f\in C_c^\infty(\bR^{d+1})$  have a support in $(-3(h(b))^{-1}, \infty)\times \bR^d$. Then,
\begin{equation*}
\begin{gathered}
\aint_{Q_b}|L f (t,x)|dxdt \leq C(d,\text{\boldmath{$\kappa$}}_{2},\text{\boldmath{$\delta$}},\ell) \|f\|_{L_\infty(\bR^{d+1})}.
\end{gathered}
\end{equation*}
\end{lemma}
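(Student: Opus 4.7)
The plan is to perform a Calder\'on--Zygmund localization: split $f$ into a piece supported near $Q_b$, which is handled by Lemma~\ref{ininestimate}, and a far piece, which is controlled directly by the kernel bounds from Section~\ref{sec3}. Fix $\eta\in C^{\infty}(\bR^{d+1})$ with $0\le\eta\le 1$, $\eta\equiv 1$ on $(-2(h(b))^{-1},2(h(b))^{-1})\times B_{2b}$, and $\eta\equiv 0$ outside $(-3(h(b))^{-1},3(h(b))^{-1})\times B_{3b}$, and decompose $f=f_{1}+f_{2}$ with $f_{1}=f\eta$, $f_{2}=f(1-\eta)$. Since $f_{1}\in C_{c}^{\infty}$ is supported in $(-3(h(b))^{-1},3(h(b))^{-1})\times B_{3b}$ and $\|f_{1}\|_{L_\infty}\le \|f\|_{L_\infty}$, Lemma~\ref{ininestimate} applies to $f_1$ and gives $\aint_{Q_b}|Lf_{1}|\,dx\,dt\le C\|f\|_{L_\infty(\bR^{d+1})}$.

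For $f_{2}$, the key observation is that if $(t,x)\in Q_b$ and $(s,y)\in\mathrm{supp}(f_{2})$ with $s<t$, then $s>-3(h(b))^{-1}$ and $(s,y)\notin(-2(h(b))^{-1},2(h(b))^{-1})\times B_{2b}$. Since $|t|<(h(b))^{-1}$ rules out $s\ge 2(h(b))^{-1}$, either $s\le-2(h(b))^{-1}$, forcing $\tau:=t-s\ge(h(b))^{-1}$, or $|y|\ge 2b$, forcing $|z|:=|x-y|\ge b$; in all cases $\tau<4(h(b))^{-1}$. Combining this with~\eqref{eqn 06.02.15:23} I obtain
$$
|Lf_{2}(t,x)|\;\le\;\|f\|_{L_\infty(\bR^{d+1})}\,(I_A+I_B),
$$
where $I_A:=\int_{(h(b))^{-1}}^{4(h(b))^{-1}}\int_{\bR^{d}}|q(\tau,z)|\,dz\,d\tau$ and $I_B:=\int_{0}^{4(h(b))^{-1}}\int_{|z|\ge b}|q(\tau,z)|\,dz\,d\tau$.

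The main obstacle is to bound $I_A$ and $I_B$ uniformly in $b$. Both reduce to the auxiliary estimate
$$
\int_{\bR^{d}}|q(\tau,z)|\,dz\;\le\; C\tau^{-1}\qquad\forall\,\tau>0\qquad (\ast)
$$
with $C$ independent of $\tau$. For $\tau$ below the threshold $t_{1,0}$ of Theorem~\ref{thm 09.13.17:01}, $(\ast)$ follows from the pointwise bound on $|q(\tau,\cdot)|$ in Theorem~\ref{cor 09.13.16:58} (Case~(i)) or Theorem~\ref{thm 08.24.16:27} (Case~(ii)) combined with Lemma~\ref{rmk 05.07.16:12}; for larger $\tau$, Theorem~\ref{thm 09.13.17:01} with the identity $\int_{a}^{\infty}K(\rho)\rho^{-1}d\rho\le h(a)$ from~\eqref{eqn 08.07.16:31} at $a=h^{-1}(\tau^{-1})$ completes $(\ast)$. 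Given $(\ast)$, $I_{A}\le C\int_{(h(b))^{-1}}^{4(h(b))^{-1}}\tau^{-1}d\tau=C\log 4$.

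For $I_{B}$ I split at $\tau=(h(b))^{-1}$: the piece $\tau\ge(h(b))^{-1}$ is dominated by $I_A$ since $\int_{|z|\ge b}\le\int_{\bR^d}$, while on $\tau\le(h(b))^{-1}$ the exponential in the heat-kernel bound is $\le 1$, and the equivalence $K(r)/r^{d}\asymp j_{d}(r)$ from~\eqref{Kellcomp} (together with $K(\theta_a(|z|,\tau))/\theta_a(|z|,\tau)^d\le K(|z|)/|z|^d$ in Case~(ii), since $K(r)/r^d$ is decreasing in $r$) yields
$$
\int_{0}^{(h(b))^{-1}}\int_{|z|\ge b}|q(\tau,z)|\,dz\,d\tau\;\le\; C(h(b))^{-1}\!\int_{|z|\ge b}j_{d}(|z|)\,dz\;\asymp\;(h(b))^{-1}L(b)\;\le\; C,
$$
using $L\le h$. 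Summing the bounds on $Lf_1$ and $Lf_2$ then concludes the proof. The chief technical step is proving $(\ast)$ uniformly in $\tau$ by gluing the short-time bounds of Section~\ref{sec3} with the large-time decay of Theorem~\ref{thm 09.13.17:01}; the remaining decomposition is the standard Calder\'on--Zygmund pattern already used for Lemma~\ref{ininestimate}.
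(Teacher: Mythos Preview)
Your proof is correct, but takes a slightly more elaborate route than the paper. The paper first observes that for $(t,x)\in Q_b$ the operator $L$ only sees $s<t<(h(b))^{-1}$, so one may replace $f$ by $f\eta$ with $\eta=\eta(t)$ a \emph{time-only} cut-off equal to $1$ on $(-\infty,2(h(b))^{-1})$ and $0$ on $(3(h(b))^{-1},\infty)$; this costs nothing and reduces to $f$ supported in $(-3(h(b))^{-1},3(h(b))^{-1})\times\bR^d$ without generating any ``far in time'' contribution. The paper then cuts in space only, so the far piece automatically has $|x-y|>b$, and the single uniform bound $\int_{|z|\ge b}|q(\tau,z)|\,dz\le Ch(b)$ (via the off-diagonal heat-kernel estimates and \eqref{eqn 08.07.16:31}) finishes the argument after integrating over $\tau\in(0,4(h(b))^{-1})$. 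Your space--time cut-off instead produces both a far-in-time region $I_A$ and a far-in-space region $I_B$; you then need the additional integral estimate $(\ast)$ to control $I_A$. This is not wrong---indeed $(\ast)$ is exactly \eqref{eqn 09.23.16:51} with $k=1$ and is required later in Lemma~\ref{outwholetimeestimate} anyway---but it is unnecessary for the present lemma, and the paper's simpler one-variable time cut-off sidesteps it entirely.
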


\begin{proof}
Take a function $\eta \in C^{\infty}(\bR)$ such that $0\leq \eta \leq 1$, $\eta=1$ on $(-\infty,2(h(b))^{-1})$ $\eta=0$ on $(3(h(b))^{-1},\infty)$. Then for $(t,x) \in Q_{b}$ we can check that $Lf (t,x) = L(f\eta)(t.x)$. Moreover, $|f\eta (t,x)| \leq |f (t,x)|$ for all $(t,x)$. Therefore, we may assume that the support of $f$ lies in $(-3(h(b))^{-1},3(h(b))^{-1})\times \bR^{d}$. Also, if we take $\zeta=\zeta(x) \in C_c^\infty (\bR^d)$ such that $\zeta=1$ in $B_{7b/3}$, $\zeta=0$ outside of $B_{8b/3}$ and $0\leq\zeta\leq1$, then due to the linearity of $L$ we have $L f = L (f\zeta) + L (f(1-\zeta))$, where $L(f\zeta)$ can be handled by Lemma \ref{ininestimate}. Hence, we may further assume that $f(t,z)=0$ for $|z|<2b$. 

Under the above setting, for any $x\in B_b$,
\begin{align*}
\int_{\bR^d} \left|q (t-s,x-y) f(s,y)\right| dy =\int_{|y-x|\geq 2b} \left|q (t-s,y) f(s,x-y)\right| dy 
\leq  \int_{|y|\geq b} \left|q (t-s,y) f(s,x-y) \right| dy.
\end{align*}
Using Theorem \ref{cor 09.13.16:58} (when $\ell$ satisfies Assumption \ref{ell_con} (i)), Theorem \ref{thm 09.13.17:01} (when $\ell$ satisfies Assumption \ref{ell_con} (ii)),  and \eqref{eqn 08.07.16:31},
\begin{align*}
 \int_{|y|\geq b} \left|q (t-s,y) f(s,x-y) \right| dy 
\leq C\|f\|_{L_\infty(\bR^{d+1})}\mathbf{1}_{|s|\leq 3(h(b))^{-1}} \int_b^\infty  \frac{K(\rho)}{\rho^{d}}\rho^{d-1}d\rho
\leq C\|f\|_{L_\infty(\bR^{d+1})} \mathbf{1}_{|s|\leq 3(h(b))^{-1}}   h(b).
\end{align*}
Note that if $|t| \leq (h(b))^{-1}$ and $|s|\leq 3(h(b))^{-1}$ then $|t-s|\leq 4(h(b))^{-1}$. Thus we have
\begin{align*}
|L f(t,x)| &\leq C\|f\|_{L_\infty(\bR^{d+1})} h(b) \int_{|t-s|\leq 4(h(b))^{-1}} ds
 \leq C \|f\|_{L_{\infty}(\R^{d+1})}.
\end{align*}
This implies the desired estimate. The lemma is proved.
\end{proof}

\begin{lemma}
\label{outwholetimeestimate}
Let $b>0$, and $f\in C_c^\infty(\bR^{d+1})$  have a support in  $(-\infty, -2(h(b))^{-1})\times \bR^d$. Then there is $C=C(d,\text{\boldmath{$\kappa$}}_{4},\text{\boldmath{$\delta$}},\ell)$ such that for  any $(t_1,x), (t_2,x)\in Q_b$, 
\begin{gather*}
\aint_{Q_b}\aint_{Q_b} |L f (t_1,x)-L f(t_2,x)| dx dt_1 d\tilde{x} dt_2\leq C \|f\|_{L_\infty(\bR^{d+1})}.
\end{gather*}

 \end{lemma}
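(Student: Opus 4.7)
The strategy is to exploit the separation in time between the support of $f$ and the cube $Q_b$ in order to control the time-oscillation of $Lf$ on $Q_b$ via the fundamental theorem of calculus applied to $\partial_t Lf$.

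First observe that for $(t,x)\in Q_b$ one has $t > -(h(b))^{-1}$, while $s < -2(h(b))^{-1}$ on the support of $f$. Hence $t-s > (h(b))^{-1} > 0$, so the integrand $q(t-s, x-y) f(s,y)$ is smooth in $t$ near the support of $f$ and we may differentiate under the integral sign. Since $q = \mathcal{L} p_d$ and $\partial_t p_d = \mathcal{L} p_d$, we get
\begin{align*}
\partial_t Lf(t,x) = \int_{-\infty}^{-2(h(b))^{-1}} \int_{\bR^d} \mathcal{L}^2 p_d(t-s, x-y) f(s,y) \, dy \, ds.
\end{align*}
After the change of variables $r = t-s$,
\begin{align*}
|\partial_t Lf(t,x)| \leq \|f\|_{L_\infty(\bR^{d+1})} \int_{(h(b))^{-1}}^\infty \int_{\bR^d} |\mathcal{L}^2 p_d(r,z)| \, dz \, dr.
\end{align*}

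The crux is the estimate
\begin{align*}
\mathcal{I}(b) := \int_{(h(b))^{-1}}^\infty \int_{\bR^d} |\mathcal{L}^2 p_d(r,z)| \, dz \, dr \leq C\, h(b),
\end{align*}
which I would establish by proving the stronger assertion $\int_{\bR^d} |\mathcal{L}^2 p_d(r,z)| \, dz \leq C r^{-2}$ uniformly in $r>0$. For $r \leq T_0$ (where $T_0 := \max(t_{2,0}, 1)$ with $t_{2,0}$ from Theorem \ref{thm 09.13.17:01}), apply Theorem \ref{cor 09.13.16:58} (case (i) of Assumption \ref{ell_con}) or Theorem \ref{thm 08.24.16:27} with $a \geq \alpha_{2,0}$ (case (ii)) to $\mathcal{L}^2 p_d$, then integrate in $z$ using Lemma \ref{rmk 05.07.16:12} (i) or (ii); each application supplies one factor of $r^{-1}$. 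For $r \geq T_0$, invoke Theorem \ref{thm 09.13.17:01} with $k=2$, $m=0$ and split $\bR^d$ at $|z| = h^{-1}(r^{-1})$: the region $\{|z| \leq h^{-1}(r^{-1})\}$ contributes $r^{-2} (h^{-1}(r^{-1}))^{-d} \cdot |B_{h^{-1}(r^{-1})}| \asymp r^{-2}$, while the region $\{|z| \geq h^{-1}(r^{-1})\}$ yields $r^{-1} \int_{h^{-1}(r^{-1})}^\infty K(\rho) \rho^{-1} d\rho \leq r^{-1} h(h^{-1}(r^{-1})) = r^{-2}$ by \eqref{eqn 08.07.16:31}. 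Integrating $r^{-2}$ from $(h(b))^{-1}$ to $\infty$ yields $\mathcal{I}(b) \leq C h(b)$.

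Given this, the fundamental theorem of calculus gives, for any $(t_1,x), (t_2,x) \in Q_b$,
\begin{align*}
|Lf(t_1,x) - Lf(t_2,x)| \leq |t_1-t_2| \cdot C\, h(b) \, \|f\|_{L_\infty(\bR^{d+1})} \leq C \|f\|_{L_\infty(\bR^{d+1})},
\end{align*}
since $|t_1-t_2| \leq 2(h(b))^{-1}$. Averaging over $(t_1,x,t_2,\tilde{x}) \in Q_b \times Q_b$ (the $d\tilde{x}$ integration contributes only a factor of $1$) delivers the claimed bound. The main obstacle is the uniform-in-$r$ estimate $\int_{\bR^d} |\mathcal{L}^2 p_d(r,z)| dz \leq C r^{-2}$: its proof must cover all subcases of Assumption \ref{ell_con}, in particular case (ii)--(1), where the definition of $Q_b$ uses $\tilde{h} = (a\lambda)^{-1} h$ rather than $h$ (cf.\ Remark \ref{r:hku_largetime} (ii)), so the threshold $(h(b))^{-1}$ is really $(\tilde{h}(b))^{-1}$ and must match the scaling in the heat kernel estimate. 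This is also the point at which the assumption $\mathbf{H}(d,4)$ is sharp: two applications of $\mathcal{L}$ to $p_d$ invoke $\mathbf{H}(d,4)$ via Theorem \ref{thm 08.24.16:27} with $k=2$, exactly as anticipated in Remark \ref{rmk 08.31.14:17}.
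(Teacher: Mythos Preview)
Your proposal is correct and follows essentially the same route as the paper: both arguments reduce to the pointwise estimate $\int_{\bR^d}|\mathcal{L}^{2}p_{d}(r,z)|\,dz\leq Cr^{-2}$ (this is \eqref{eqn 09.23.16:51} with $k=2$), and then combine the fundamental theorem of calculus in $t$ with the time separation $t-s>(h(b))^{-1}$ and the bound $|t_1-t_2|\leq 2(h(b))^{-1}$. The only cosmetic difference is the order of operations---the paper integrates $\int_{t_2}^{t_1}\partial_t q\,dt$ inside the $s$-integral and then uses $\int_{-\infty}^{-2(h(b))^{-1}}(t-s)^{-2}\,ds\leq h(b)$, whereas you first bound $|\partial_t Lf(t,x)|$ uniformly by $Ch(b)\|f\|_{L_\infty}$ and then multiply by $|t_1-t_2|$; the underlying computations are identical.
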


\begin{proof}
We will show that 
$$
|L f (t_1,x)-L f(t_2,x)|   \leq C \|f\|_{L_\infty(\bR^{d+1})},
$$
and this certainly proves the lemma. 

Without loss of generality, we assume that $t_{1}>t_{2}$. Recall $f(s,y)=0$ if $s\geq -2(h(b))^{-1}$. Thus,  if $t>-(h(b))^{-1}$,  by  applying the fundamental theorem of calculus and the relation $\partial_{t}q = \partial_{t}\mathcal{L}p_{d}=\mathcal{L}^{2}p_{d}$, we have
\begin{align*}
|L f(t_1,x)-L f(t_2,x)|
&= \left|\int_{-\infty}^{-2(h(b))^{-1}}  \int_{\bR^d}  \int_{t_2}^{t_1} \mathcal{L}^{2}p_{d}(t-s,x-y) f(s,y)  dt dy ds \right|.
\end{align*}
Suppose that $\ell$ satisfies Assumption \ref{ell_con} (i). Observe that for $k\in \bN$,
\begin{align*}
\int_{\bR^{d}} |\mathcal{L}^{k}p_{d}(s,y)| dy  \nonumber
&\leq C \int_{\bR^{d}}  s^{1-k} \frac{K(|y|)}{|y|^{d}} e^{-Csh(|x|)} dy  \nonumber
\\
&\quad  +  C\int_{\bR^{d}}  \left( s^{-k} (h^{-1}(s^{-1}))^{-d} \mathbf{1}_{sh(|y|)\geq1} +  s^{1-k}\frac{K(|y|)}{|y|^{d}}\mathbf{1}_{sh(|y|)\leq 1} \right)dy  \nonumber
\\
&:= C(I_{k,1}(s)+I_{k,2}(s))
\end{align*}
due to  Theorem \ref{cor 09.13.16:58} and Theorem \ref{thm 09.13.17:01}. By Lemma \ref{rmk 05.07.16:12} (i), we have $I_{k,1}(s)\leq Cs^{-k}$. For $I_{k,2}(s)$, observe that
\begin{align*}
I_{k,2}(s) &=  \left(\int_{|y|\leq h^{-1}(s^{-1})}s^{-k}(h^{-1}(s^{-1})^{-d} dy + \int_{h^{-1}(s^{-1})}^{\infty}s^{1-k}K(\rho)\rho^{-1} d\rho \right)
\leq Cs^{-k}
\end{align*}
due to \eqref{eqn 08.07.16:31}. Hence, we have
\begin{align}\label{eqn 09.23.16:51}
\int_{\bR^{d}} |\mathcal{L}^{k}p_{d}(s,y)|dy \leq C(I_{k,1}(s)+I_{k,2}(s)) \leq Cs^{-k}.
\end{align} Also, one can check the same result holds when $\ell$ satisfies Assumption \ref{ell_con} (ii). Therefore, 
\begin{align*}
&\int_{\bR^d} \int_{t_2}^{t_1} |\mathcal{L}^{2}p_{d} (t-s,x-y) f(s,y)| dt dy
\\
&\leq C \|f\|_{L_{\infty}} \int_{t_{2}}^{t_{1}} (I_{2,1}(t-s)+I_{2,2}(t-s))  dt
\leq C \|f\|_{L_\infty(\bR^{d+1})} \int_{t_2}^{t_1} (t-s)^{-2}dt.
\end{align*}
Thus,  if  $-(h(b))^{-1}\leq t_2<t_1 \leq (h(b))^{-1}$, 
\begin{align*}
\Big|\int_{-\infty}^{-2(h(b))^{-1}}  \int_{\bR^d}  \int_{t_2}^{t_1}\mathcal{L}^{2}p_{d} (t-s,x-y) f(s,y)  dt dy ds\Big|
&\leq C \|f\|_{L_\infty(\bR^{d+1})} \left(\int_{t_2}^{t_1}\int_{-\infty}^{-2(h(b))^{-1}} (t-s)^{-2}ds dt \right)
\\
&\leq C \|f\|_{L_\infty(\bR^{d+1})} \left(\int_{t_2}^{t_1} h(b) dt \right)
\leq C \|f\|_{L_\infty(\bR^{d+1})}.
\end{align*}
This completes the proof.
\end{proof}

\begin{lemma}
\label{outoutspaceestimate}
Let $b>0$, and $f\in C_c^\infty(\bR^{d+1})$ have a support in  $(-\infty,-2(h(b))^{-1})\times B_{2b}^c$. Then there is $C=C(\alpha,d,\text{\boldmath{$\kappa$}}_{3},\text{\boldmath{$\delta$}},\ell)$ such that for any $(t,x_{1}),(t,x_{2})\in Q_b$, 
$$
\aint_{Q_{b}}\aint_{Q_b} |L f (t,x_{1}) - L f(t,x_{2})| dx_{1} dt dx_{2} d\tilde{t} \leq C \|f\|_{L_\infty(\bR^{d+1})}.
$$
\end{lemma}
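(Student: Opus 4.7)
The plan is to mirror the strategy of Lemma \ref{outwholetimeestimate}, replacing the temporal increment with a spatial one and using the gradient estimate \eqref{q1upper} from Lemma \ref{l:qest_case1} in place of the estimate on $\int |\mathcal{L}^{2} p_{d}|$. It suffices to establish the pointwise bound
\begin{equation*}
|L f(t, x_{1}) - L f(t, x_{2})| \leq C \|f\|_{L_{\infty}(\bR^{d+1})} \quad \text{for all } (t, x_{1}), (t, x_{2}) \in Q_{b},
\end{equation*}
since this immediately implies the claimed double-averaged inequality.

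Writing $x_{\theta} := \theta x_{1} + (1-\theta) x_{2}$, the fundamental theorem of calculus combined with differentiation under the integral sign (legitimate because the support of $f$ is bounded away from the singularity of $q$) yields
\begin{equation*}
L f(t, x_{1}) - L f(t, x_{2}) = (x_{1} - x_{2}) \cdot \int_{0}^{1} \int_{-\infty}^{-2(h(b))^{-1}} \int_{|y| > 2b} D_{x} q(t - s, x_{\theta} - y) f(s, y) \, dy \, ds \, d\theta.
\end{equation*}
Since $(t, x_{\theta}) \in Q_{b}$, for every $s < -2(h(b))^{-1}$ and $|y| > 2b$ we have $t - s > (h(b))^{-1}$ and $|x_{\theta} - y| > b$. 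After the substitution $\tau := t - s$, $z := x_{\theta} - y$, this gives
\begin{equation*}
|L f(t, x_{1}) - L f(t, x_{2})| \leq 2 b \, \|f\|_{L_{\infty}(\bR^{d+1})} \int_{(h(b))^{-1}}^{\infty} \int_{|z| > b} |D_{x} q(\tau, z)| \, dz \, d\tau,
\end{equation*}
and \eqref{q1upper} bounds the inner double integral by $C b^{-1}$. The factor $b$ cancels, producing the desired pointwise inequality, and integrating it over $Q_{b} \times Q_{b}$ completes the proof.

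I do not anticipate a significant obstacle: the entire case analysis between branches (i), (ii)--(1), and (ii)--(2) of Assumption \ref{ell_con} has already been absorbed into \eqref{q1upper} inside the proof of Lemma \ref{l:qest_case1}, so the argument above simply routes the spatial difference through $D_{x} q$ and invokes that single estimate. If $\ell$ satisfies Assumption \ref{ell_con} (ii)--(1), $h$ should be read as $\tilde h$ throughout, consistent with the convention introduced just before Lemma \ref{l:qest_case1}. The only point requiring routine care is the justification of differentiation under the integral sign, which follows from dominated convergence using the pointwise bound $|D_{x} q(\tau, z)| \leq C \tau^{-1} K(\theta_{a}(|z|,\tau))/\theta_{a}(|z|,\tau)^{d+1} e^{-C^{-1}\tau h(\theta_{a}(|z|,\tau))}$ from Theorem \ref{thm 08.24.16:27} (or its counterpart Theorem \ref{cor 09.13.16:58}) on the region $\tau > (h(b))^{-1}$, $|z| > b$.
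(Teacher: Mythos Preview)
Your proof is correct and follows essentially the same route as the paper's: reduce to the pointwise bound, apply the fundamental theorem of calculus in the spatial variable along the segment joining $x_{1}$ and $x_{2}$, observe that the convex combination stays in $B_{b}$ so that $|x_{\theta}-y|>b$ and $t-s>(h(b))^{-1}$, and then invoke \eqref{q1upper}. The only differences are notational (your $x_{\theta}$ is the paper's $\lambda(x_{1},x_{2},u)$) and your added remarks on differentiation under the integral sign and the $\tilde{h}$ convention, which are harmless elaborations.
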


\begin{proof}
Like the previous lemma, it is enough to show that
\begin{gather*}
|L f (t,x_{1}) - L f(t,x_{2})| \leq C \|f\|_{L_\infty(\bR^{d+1})}.
\end{gather*}

Recall $f(s,y)=0$ if $s\geq -2(h(b))^{-1}$ or $|y|\leq 2b$. Thus,  if $t>-(h(b))^{-1}$, by the fundamental theorem of calculus,
\begin{equation*}
\begin{aligned}
|L f(t,x_{1}) - L f(t,x_{2})|
&=\Big|\int_{-\infty}^{-2(h(b))^{-1}}  \int_{|y|\geq 2b} \left(q (t-s,x_{1}-y) -q(t-s,x_{2}-y) \right) f(s,y) dy ds \Big|
\\
&= \Big|\int_{-\infty}^{-2(h(b))^{-1}}  \int_{|y|\geq 2b} \int_{0}^{1}  \nabla_{x} q(t-s,\lambda(x_{1},x_{2},u)-y) \cdot (x_{1}-x_{2}) du f(s,y) dy ds \Big|,
\end{aligned}
\end{equation*}
where $\lambda(x_{1},x_{2},u)=ux_{1}+(1-u)x_{2}$. Since $x_{1},x_{2}\in B_{b}$ and $|y|\geq 2b$, we can check that $|\lambda(x_{1},x_{2},u)-y| \geq b$. Thus, by the change of variable $(\lambda(x_{1},x_{2},u)-y) \to y$,
\begin{equation*}
\begin{aligned}
|L f(t,x_{1}) - L f(t,x_{2})|
& \leq C  b \|f\|_{L_\infty(\bR^{d+1})}  \int_{-\infty}^{ -2(h(b))^{-1}} \int_{|y|\geq b} |\nabla_{x}q (t-s,y)| dy ds
\\
& \leq C b  \|f\|_{L_\infty(\bR^{d+1})}  \int_{(h(b))^{-1}}^{\infty} \int_{|y|\geq b} |\nabla_{x}q (s,y)| dy ds.
\end{aligned}
\end{equation*}
Thus, by \eqref{q1upper}, we obtain the desired result.
\end{proof}

\begin{lemma} \label{outinspaceestimate}
Let $b>0$, and $f\in C_c^\infty(\bR^{d+1})$  have a support in   $(-\infty,-2(h(b))^{-1})\times B_{3b}$. Then for any $(t,x)\in Q_b$
\begin{gather*}
\aint_{Q_b}|L f (t,x)|dx dt \leq C(d,\text{\boldmath{$\kappa$}}_{2},\text{\boldmath{$\delta$}},\ell)\|f\|_{L_\infty(\bR^{d+1})}.
\end{gather*} 
\end{lemma}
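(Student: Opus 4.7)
I will obtain a pointwise bound $|Lf(t,x)| \le C \|f\|_{L_\infty(\bR^{d+1})}$ uniformly on $Q_b$, which trivially yields the averaged estimate. The argument rests on the separation in time between the support of $f$ and the cylinder $Q_b$, in the same spirit as Lemmas \ref{outwholetimeestimate} and \ref{outoutspaceestimate}.

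For any $(t,x)\in Q_b$ we have $t>-(h(b))^{-1}$, while $f(s,y)$ vanishes unless $s<-2(h(b))^{-1}$ and $|y|\le 3b$. Thus on the region where the integrand in \eqref{def_Lf} is nonzero, $t-s>(h(b))^{-1}$ and $|x-y|\le|x|+|y|<4b$. Putting absolute values inside the integral, pulling out $\|f\|_{L_\infty(\bR^{d+1})}$, and performing the change of variables $u=t-s$, $z=x-y$ gives
$$
|Lf(t,x)|\le \|f\|_{L_\infty(\bR^{d+1})}\int_{(h(b))^{-1}}^{\infty}\int_{|z|\le 4b}|q(u,z)|\,dz\,du.
$$

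The main step is to bound the integral above by a constant. I would split the time integral at $(h(4b))^{-1}$. The tail on $[(h(4b))^{-1},\infty)$ is precisely $\le C_2$ by \eqref{q0upper}. For $u\in[(h(b))^{-1},(h(4b))^{-1}]$, the doubling bound \eqref{eqn 05.12:15:07} applied to $h$ gives $(h(4b))^{-1}\le C(h(b))^{-1}$, so the interval has length comparable to $(h(b))^{-1}$ and $u\asymp(h(4b))^{-1}$ throughout. On this range, the heat kernel estimates for $q=\mathcal{L}p_d$ from Section \ref{sec3} (Theorem \ref{cor 09.13.16:58} under Assumption \ref{ell_con}(i); Lemma \ref{pfracderivativeestimate} together with Theorem \ref{thm 09.13.17:01} under (ii)) bound $|q(u,z)|$ by $CK(|z|)/|z|^d$ (or its $\theta_a$-analogue) uniformly in $u$, since the exponential factor is $\le 1$. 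Integrating in $z$ using \eqref{eqn 08.07.16:31} produces $Ch(4b)$, and multiplying by the interval length yields a constant.

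Combining the two pieces gives $|Lf(t,x)|\le C\|f\|_{L_\infty(\bR^{d+1})}$ on $Q_b$, which is stronger than what the lemma asserts. The only slightly subtle point is reconciling the cutoff $(h(b))^{-1}$ with the cutoff $(h(4b))^{-1}$ built into \eqref{q0upper}, which is absorbed by the doubling property of $h$; all remaining steps are straightforward assembly of estimates already developed in Section \ref{sec3}.
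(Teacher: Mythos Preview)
Your overall architecture is the same as the paper's: reduce to a pointwise bound on $Q_b$, change variables to get
\[
|Lf(t,x)|\le\|f\|_{L_\infty}\int_{(h(b))^{-1}}^{\infty}\int_{|z|\le 4b}|q(u,z)|\,dz\,du,
\]
split the time integral at $(h(4b))^{-1}$, and dispatch the tail with \eqref{q0upper}. That part is fine.

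The gap is in your treatment of the range $u\in[(h(b))^{-1},(h(4b))^{-1}]$. You discard the exponential and claim that integrating $K(|z|)/|z|^{d}$ over $|z|\le 4b$ yields $Ch(4b)$ by \eqref{eqn 08.07.16:31}. Both steps fail. First, \eqref{eqn 08.07.16:31} bounds $\int_{a}^{\infty}K(\rho)\rho^{-1}\,d\rho$, an integral over the \emph{exterior} region; it says nothing about $\int_{0}^{4b}K(\rho)\rho^{-1}\,d\rho$. Second, the latter integral is in fact infinite: by \eqref{Kellcomp} we have $K(\rho)\asymp\ell(\rho^{-1})$, and \eqref{ll1} gives $\ell(\rho^{-1})\ge c>0$ for small $\rho$, so $K(\rho)/\rho\ge c/\rho$ near zero and the integral diverges logarithmically. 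The exponential factor you dropped is exactly what makes the spatial integral finite.

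The paper handles this range differently: instead of bounding the spatial integral of the pointwise upper bound for $|q|$, it uses \eqref{eqn 09.23.16:51} with $k=1$ (equivalently, Lemma~\ref{rmk 05.07.16:12} under the full kernel estimate with the exponential retained) to get $\int_{\bR^d}|q(u,y)|\,dy\le Cu^{-1}$, and then integrates in time to obtain $\int_{(h(b))^{-1}}^{(h(4b))^{-1}}u^{-1}\,du=\log\!\big(h(b)/h(4b)\big)\le \log 16$ by \eqref{eqn 05.12:15:07}. Your proposal is easily repaired along these lines, but as written the key spatial estimate is false.
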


\begin{proof}
For $(t,x)\in Q_b$,
\begin{align*}
|L f(t,x)| \leq&\int_{-\infty}^{-2(h(b))^{-1}} \int_{B_{3b}} |q (t-s,x-y)f(s,y)|dyds
\\
\leq& C \|f\|_{L_\infty} \int_{-\infty}^{-2(h(b))^{-1}} \int_{B_{3b}} |q (t-s,x-y)|dyds
\\
\leq& C \|f\|_{L_\infty} \int_{(h(b))^{-1}}^{\infty} \int_{B_{4b}} |q(s,y)|dyds := C \|f\|_{L_\infty} \left( I +II \right),
\end{align*}
where
$I=\int^{(h(4b))^{-1}}_{(h(b))^{-1}} \int_{B_{4b}} |q (s,y)|dyds$, and $II=\int_{(h(4b))^{-1}}^{\infty} \int_{B_{4b}} |q (s,y)|dyds$.
\\
\noindent
By \eqref{eqn 09.23.16:51} with $k=1$ and \eqref{eqn 05.12:15:07}
\begin{align*}
I &= \int^{(h(4b))^{-1}}_{(h(b))^{-1}} \int_{B_{4b}}  |q (s,y)|dyds
\leq C \int^{(h(4b))^{-1}}_{(h(b))^{-1}} s^{-1}ds
 \leq C \log\left(\frac{(h(4b))^{-1}}{(h(b))^{-1}} \right) 
\leq C \log\left(16\right).
\end{align*}
By \eqref{q0upper}, we also have $II\le C$.
Thus, $I$ and $II$ are bounded by a constant independent of $b$. Hence, we have the desired result. The lemma is proved.
\end{proof}

\begin{corollary}
\label{outwholeestimate}
There is $C=C(d,\text{\boldmath{$\kappa$}}_{4},\text{\boldmath{$\delta$}},\ell)$ such that for any $f\in \Ccinf(\bR^{d+1})$ and $b>0$
\begin{align}\label{eqn 09.28.15:28}
\aint_{Q_b}\aint_{Q_b}|L f (t,x)-L f(s,y)|dxdtdyds \leq C \|f\|_{L_\infty(\bR^{d+1})}.
\end{align}
\end{corollary}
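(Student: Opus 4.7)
The plan is the standard Calder\'on--Zygmund cutoff argument: split $f$ into three pieces whose supports match the hypotheses of Lemmas \ref{inwholeestimate}, \ref{outinspaceestimate}, \ref{outwholetimeestimate}, and \ref{outoutspaceestimate}, then apply those lemmas piece by piece.

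First, choose smooth cutoffs $\eta_1,\eta_2 \in C^\infty(\bR)$ with $\eta_1+\eta_2 \equiv 1$, $0\leq \eta_i \leq 1$, $\eta_1$ supported in $(-3(h(b))^{-1},\infty)$ and equal to $1$ on $(-2(h(b))^{-1},\infty)$, and $\eta_2$ supported in $(-\infty,-2(h(b))^{-1})$; and a cutoff $\zeta \in C_c^\infty(\bR^d)$ with $\zeta \equiv 1$ on $B_{2b}$ and $\zeta \equiv 0$ outside $B_{3b}$. Decompose $f = g_1+g_2+g_3$, where
\begin{equation*}
g_1(s,y):=\eta_1(s)f(s,y),\qquad g_2(s,y):=\eta_2(s)\zeta(y)f(s,y),\qquad g_3(s,y):=\eta_2(s)(1-\zeta(y))f(s,y).
\end{equation*}
Each $g_i \in C_c^\infty(\bR^{d+1})$ with $\|g_i\|_{L_\infty(\bR^{d+1})} \leq \|f\|_{L_\infty(\bR^{d+1})}$, and the supports satisfy: $g_1$ lives in $(-3(h(b))^{-1},\infty)\times \bR^d$, $g_2$ lives in $(-\infty,-2(h(b))^{-1})\times B_{3b}$, and $g_3$ lives in $(-\infty,-2(h(b))^{-1})\times B_{2b}^c$, matching the hypotheses of Lemmas \ref{inwholeestimate}, \ref{outinspaceestimate}, and both \ref{outwholetimeestimate}/\ref{outoutspaceestimate}, respectively. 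By linearity of $L$, it suffices to bound the left side of \eqref{eqn 09.28.15:28} for each $g_i$.

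For $i=1,2$, Lemma \ref{inwholeestimate} and Lemma \ref{outinspaceestimate} yield
\begin{equation*}
\aint_{Q_b} |Lg_i(t,x)|\,dxdt \leq C\|f\|_{L_\infty(\bR^{d+1})},
\end{equation*}
and the triangle inequality $|Lg_i(t,x)-Lg_i(s,y)|\leq |Lg_i(t,x)|+|Lg_i(s,y)|$ gives the desired double-average bound by $2C\|f\|_{L_\infty(\bR^{d+1})}$. For the remaining piece $g_3$, use the triangle inequality through the intermediate point $(t,y)$:
\begin{equation*}
|Lg_3(t,x)-Lg_3(s,y)|\leq |Lg_3(t,x)-Lg_3(t,y)|+|Lg_3(t,y)-Lg_3(s,y)|.
\end{equation*}
The first term is controlled pointwise by $C\|f\|_{L_\infty(\bR^{d+1})}$ via Lemma \ref{outoutspaceestimate} (the support hypothesis is exactly satisfied), while the second term is controlled pointwise by $C\|f\|_{L_\infty(\bR^{d+1})}$ via Lemma \ref{outwholetimeestimate} (the support of $g_3$ in time is $(-\infty,-2(h(b))^{-1})$, as required). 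Integrating over $Q_b\times Q_b$ and dividing by $|Q_b|^2$ preserves the bound.

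Summing the contributions from $g_1, g_2, g_3$ yields \eqref{eqn 09.28.15:28} with a constant of the form $C=C(d,\text{\boldmath{$\kappa$}}_{4},\text{\boldmath{$\delta$}},\ell)$, dictated by the largest (Lemma \ref{outwholetimeestimate}) of the four constants in the invoked lemmas. No step presents a real obstacle: the previous four lemmas have done all the heat-kernel work, and the corollary is purely an organizational consequence of the triangle inequality and the partition $f = g_1+g_2+g_3$.
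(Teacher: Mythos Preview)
Your proof is correct and follows essentially the same strategy as the paper's own proof: a smooth partition of $f$ by time and space cutoffs, reducing to the four preceding lemmas via the triangle inequality. The only cosmetic differences are the order of operations (the paper first splits the difference $|Lf(t,x)-Lf(s,y)|$ through the point $(s,x)$ and then decomposes $f$, whereas you decompose $f$ first) and the precise cutoff thresholds (the paper uses $7b/3,\,8b/3$ and $7(h(b))^{-1}/3,\,8(h(b))^{-1}/3$ instead of your $2b,\,3b$ and $2(h(b))^{-1},\,3(h(b))^{-1}$); neither changes the substance.
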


\begin{proof}
Take functions $\eta=\eta(t) \in C^\infty(\bR)$ and $\zeta=\zeta(x)\in \Ccinf(\bR^{d})$ as follows;
\begin{gather*}
0\leq \eta \leq 1, \quad \eta=1 \quad \text{on} (-\infty,-8(h(b))^{-1}/3) \quad \eta(t)=0 \quad \text{for} \quad t\geq -7(h(b))^{-1}/3,
\\
0\leq \zeta \leq 1, \quad \zeta=1 \quad \text{on} \quad B_{7b/3} \quad \zeta=0 \quad \text{on} \quad B_{8b/3}.
\end{gather*} 
Then for any $(t,x),(s,y)$ using linearity of $L$, we have
\begin{equation*}
\begin{aligned}
|Lf(t,x)-Lf(s,y)|
&\leq |Lf(t,x) - Lf(s,x)| + |Lf(s,x) - Lf(s,y)| 
\\
&\leq \left( |Lf_{1} (t,x) - Lf_{1} (s,x)| + |Lf_{1}(s,x) - Lf_{1}(s,y)| \right)   +  |Lf_{2} (t,x) - Lf_{2} (s,x)| 
\\
& \quad + |Lf_{3}(s,x) - Lf_{3}(s,y)|  + |Lf_{4}(s,x) - Lf_{4}(s,y)|
\\
&:= \sum_{i=1}^{5} A_{i}(t,s,x,y),
\end{aligned}
\end{equation*}
where $f_{1}=f(1-\eta)$, $f_{2}=f\eta$ $f_{3}=f_{2}(1-\zeta)=f\eta(1-\zeta)$ and $f_{4}=f_{2}\zeta=f\eta\zeta$. If we use Lemma \ref{inwholeestimate} to $A_{1}+A_{2}$, Lemma \ref{outwholetimeestimate} to $A_{3}$, and then, Lemma \ref{outoutspaceestimate}, and Lemma \ref{outinspaceestimate} to $A_{4}+A_{5}$, we have the desired result. The corollary is proved.
\end{proof}

For  locally integrable functions $f$ on $\bR^{d+1}$, we define the BMO semi-norm of $f$ on $\bR^{d+1}$ as
\begin{equation*}
\|f\|_{BMO(\bR^{d+1})}=\sup_{Q\in \bQ} \aint_Q |f(t,x)-f_Q| dtdx
\end{equation*}
where $f_Q=\aaint_Q f(t,x)dtdx$ and
\begin{equation*}
\bQ:=\{Q_b(t_0,x_0) : b>0, (t_0,x_0)\in \bR^{d+1} \}.
\end{equation*}

For  measurable functions $f$ on $\bR^{d+1}$, we define the sharp function
\begin{align*}
f^{\#}(t,x)=\sup_{Q_{b}(r,z)\in \bQ} \aint_{Q_b(r,z)}|f(s,y)-f_{Q_b(r,z)}|dsdy,
\end{align*}
where the supremum is taken over all $Q_{b}(r,z)\in\bQ$ containing $(t,x)$.

\begin{theorem}[Fefferman-Stein Theorem]
\label{feffermanstein}
For any $1<p<\infty$, and $f\in L_p(\bR^{d+1})$,
\begin{equation*}
C^{-1}\|f^{\#}\|_{L_p(\bR^{d+1})}\leq \|f\|_{L_p(\bR^{d+1})}\leq C\|f^{\#}\|_{L_p(\bR^{d+1})},
\end{equation*}
where $C>1$ depends on $d,p,\text{{\boldmath$\kappa$}}_{0}$.
\end{theorem}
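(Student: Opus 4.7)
The plan is to reduce this to the standard Fefferman--Stein theorem on spaces of homogeneous type by showing that the collection $\bQ$ of ``parabolic cubes'' endows $\bR^{d+1}$ (equipped with Lebesgue measure) with such a structure, with doubling constant controlled by $d,\kappa_{1,0},\kappa_{2,0}$.

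\textbf{Step 1 (Doubling).} I would first verify that $|Q_b(t_0,x_0)| = c(d)\, b^{d} (h(b))^{-1}$ and use \eqref{eqn 05.12:15:07} to obtain $h(b)/h(2b) \leq 4\kappa_{2,0}\kappa_{1,0}^{-1}$, so that $|Q_{2b}(t_0,x_0)| \leq C(d,\kappa_{1,0},\kappa_{2,0})|Q_b(t_0,x_0)|$. One then checks that if two cubes $Q_{b_1}(t_1,x_1)$ and $Q_{b_2}(t_2,x_2)$ intersect with $b_1\leq b_2$, then $Q_{b_1}(t_1,x_1)\subset Q_{cb_2}(t_2,x_2)$ for a constant $c=c(\kappa_{1,0},\kappa_{2,0})$, again using \eqref{eqn 05.12:15:07} to compare the time-scales $(h(b_i))^{-1}$.

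\textbf{Step 2 (Easy direction).} The inequality $\|f^{\#}\|_{L_p}\leq C\|f\|_{L_p}$ is immediate from the pointwise bound $f^{\#}(t,x)\leq 2Mf(t,x)$, where $Mf$ is the Hardy--Littlewood-type maximal function associated to $\bQ$, together with the strong $(p,p)$ bound of $M$. The latter follows from the weak $(1,1)$ bound proved via a Vitali-type covering lemma for $\bQ$ (whose constants only depend on the doubling constant from Step~1) and Marcinkiewicz interpolation.

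\textbf{Step 3 (Hard direction via good-$\lambda$).} For $\|f\|_{L_p}\leq C\|f^{\#}\|_{L_p}$ I would run the classical Calder\'on--Zygmund/good-$\lambda$ argument on this quasi-metric space. Concretely, for $\lambda>0$ sufficiently large and $\gamma>0$ sufficiently small, prove
\[
|\{Mf>2\lambda,\,f^{\#}\leq \gamma\lambda\}| \leq C\gamma\,|\{Mf>\lambda\}|,
\]
by performing a Calder\'on--Zygmund decomposition of $\{Mf>\lambda\}$ into disjoint maximal cubes from $\bQ$ via the Vitali lemma, then on each selected cube $Q$ comparing $f$ to its average $f_{\widetilde Q}$ over a doubled cube $\widetilde Q$ with $|\widetilde Q|\le C|Q|$ (Step~1), and exploiting that $f^{\#}\le \gamma\lambda$ on a substantial portion of $Q$. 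Integrating the resulting distributional inequality in $\lambda^{p-1}d\lambda$ and absorbing the $\gamma$-term gives $\|Mf\|_{L_p}\le C\|f^{\#}\|_{L_p}$, and then $\|f\|_{L_p}\le \|Mf\|_{L_p}$ (via Lebesgue differentiation for the family $\bQ$, valid because of doubling) closes the argument.

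\textbf{Main obstacle.} The only non-routine point is Step 1: confirming that the doubling constant of $\bQ$ is determined solely by $d,\kappa_{1,0},\kappa_{2,0}$ (and not by $\ell$ or the higher constants $\kappa_{i,n}$). Once this is in hand, the rest is the standard Fefferman--Stein machinery on spaces of homogeneous type (see, e.g., Stein's book), whose quantitative dependence on the doubling constant is well-known and explains the claimed dependence $C=C(d,p,\text{{\boldmath$\kappa$}}_{0})$.
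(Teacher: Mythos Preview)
Your proposal is correct and matches the paper's approach: the paper simply cites Stein's book (Theorems I.3.1 and IV.2.2) and remarks that the ``balls'' $Q_b(s,y)$ satisfy the axioms of a space of homogeneous type thanks to \eqref{eqn 05.12:15:07}, which is exactly your Step~1 observation. You have in fact supplied more detail than the paper does; the key point in both is that the doubling constant depends only on $d,\kappa_{1,0},\kappa_{2,0}$.
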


\begin{proof}
See  \cite[Theorem I.3.1, Theorem IV.2.2]{stein1993harmonic}.  We only remark that  due to \eqref{eqn 05.12:15:07},  the balls $Q_b(s,y)$ satisfy the conditions (i)--(iv) in  \cite[Section 1.1] {stein1993harmonic}.
\end{proof}

Recall that the linear operator $L$ is given by
$$
L f(t,x)=\lim_{\varepsilon \downarrow 0} \int_{-\infty}^{t-\varepsilon} \left(\int_{\bR^d} q(t-s,x-y) f(s,y) dy\right) ds.
$$
The following theorem is our main result in this section. The proof is quite standard.

\begin{theorem}\label{thm 05.14.18:28}

(i) For any $f\in L_2(\bR^{d+1})\cap L_\infty(\bR^{d+1})$,
\begin{equation*}
\begin{gathered}
\|L f\|_{BMO(\bR^{d+1})} \leq C(d,\text{\boldmath{$\kappa$}}_{4},\text{\boldmath{$\delta$}},\ell) \|f\|_{L_\infty(\bR^{d+1})}.
\end{gathered}
\end{equation*}

(ii) For any $p,q\in(1,\infty)$ and $f\in C_c^\infty (\bR^{d+1})$,
\begin{equation}\label{qpestimate}
\begin{gathered}
\|L f\|_{L_q(\bR;L_p(\R^{d}))} \leq C(d,p,q,\text{\boldmath{$\kappa$}}_{4},\text{\boldmath{$\delta$}},\ell) \|f\|_{L_q(\bR;L_p(\R^{d}))}.
\end{gathered}
\end{equation}

\end{theorem}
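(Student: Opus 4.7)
\textbf{Plan for Theorem \ref{thm 05.14.18:28}.}

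Part (i). The operator $L$ is a convolution operator in $(t,x)$ since its kernel $q(t-s,x-y)$ depends only on differences, and hence commutes with translations of $\bR^{d+1}$. Consequently, Corollary \ref{outwholeestimate}, although stated for the centered cube $Q_b=Q_b(0,0)$, extends verbatim to every parabolic cube $Q_b(t_0,x_0)\in\bQ$ by replacing $f$ with its shift. The triangle inequality gives
$$
\aint_{Q_b(t_0,x_0)}\!\!|Lf - (Lf)_{Q_b(t_0,x_0)}|\,dtdx \;\leq\; \aint_{Q_b(t_0,x_0)}\aint_{Q_b(t_0,x_0)}\!\!|Lf(t,x)-Lf(s,y)|\,dtdxdsdy \;\leq\; C\|f\|_{L_\infty(\bR^{d+1})},
$$
and taking the supremum over $\bQ$ yields the BMO bound in (i).

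Part (ii). The strategy is to apply the Fefferman--Stein theorem (Theorem \ref{feffermanstein}) after establishing the pointwise sharp-function estimate
$$
(Lf)^{\#}(t,x) \;\leq\; C\, M|f|(t,x) + C\bigl(M|f|^{2}(t,x)\bigr)^{1/2},
$$
where $M$ is the Hardy--Littlewood maximal function with respect to the parabolic cube basis $\bQ$. Fix $(t,x)$ and a cube $Q=Q_b(r,z)\ni(t,x)$. Decompose $f=f_1+f_2$ with $f_1=f\mathbf{1}_{Q^{\ast}}$ where $Q^{\ast}:=(r-3(h(b))^{-1},r+3(h(b))^{-1})\times B_{3b}(z)$ is the enlarged cube used throughout Section \ref{sec 10.05.15:24}. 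For the local piece, H\"older's inequality and the $L_2$-boundedness from Lemma \ref{22estimate} give
$$
\aint_Q|Lf_1-(Lf_1)_Q|\,dtdx \;\leq\; 2\aint_Q|Lf_1|\,dtdx \;\leq\; 2|Q|^{-1/2}\|Lf_1\|_{L_2(\bR^{d+1})} \;\leq\; C|Q|^{-1/2}\|f_1\|_{L_2} \;\leq\; C\bigl(M|f|^{2}(t,x)\bigr)^{1/2}.
$$
For the far-field piece $f_2$, which vanishes on $Q^{\ast}$, the proofs of Lemmas \ref{outwholetimeestimate}, \ref{outoutspaceestimate}, and \ref{outinspaceestimate} are rerun with $\|f\|_{L_\infty}$ replaced by $M|f|(t,x)$: decompose the complement of $Q^{\ast}$ dyadically (both in time and in space), and on each annular piece bound the $L_\infty$-norm by the average over a containing cube, which in turn is dominated by $M|f|(t,x)$. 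The off-diagonal bounds \eqref{q1upper} and \eqref{q0upper} from Lemma \ref{l:qest_case1} ensure that the resulting dyadic sums converge and total $CM|f|(t,x)$. Combining the two contributions proves the sharp-function estimate.

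Once this is in hand, Theorem \ref{feffermanstein} combined with the maximal inequality over $(\bR^{d+1},\bQ)$ --- which is valid because \eqref{eqn 05.12:15:07} ensures the parabolic cubes satisfy the Stein doubling condition --- yields $\|Lf\|_{L_p(\bR^{d+1})}\leq C\|f\|_{L_p(\bR^{d+1})}$ for every $p\in(1,\infty)$. To pass to the genuine mixed-norm estimate \eqref{qpestimate}, view $L$ as convolution in $t$ with the operator-valued kernel $t\mapsto q(t,\cdot)\ast\,\cdot$ acting on $L_p(\bR^d)$. The off-diagonal gradient estimate \eqref{q1upper}, integrated in the spatial variable, furnishes the vector-valued H\"ormander condition
$$
\int_{|t|\ge 2|s|}\bigl\|q(t-s,\cdot)-q(t,\cdot)\bigr\|_{L_p(\bR^d)\to L_p(\bR^d)}\,dt \;\leq\; C,
$$
and the Banach-space-valued Calder\'on--Zygmund extension, applied together with the scalar $L_p(\bR^{d+1})$-bound just established, yields $L$-boundedness on $L_q(\bR;L_p(\bR^d))$ for all $q\in(1,\infty)$.

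The main obstacle will be the rigorous bookkeeping of the far-field dyadic decomposition used in the sharp-function bound, since the cube scales depend on $h$ and thus behave differently under Assumption \ref{ell_con}(i) versus (ii); in the second case one further separates the regimes governed by (ii)--(1) and (ii)--(2), and the exponential-decay factor $\exp(-C^{-1}th(\theta_a(|x|,t)))$ in the heat kernel estimates from Section \ref{sec3} must be tracked carefully to dominate each annular contribution by the corresponding geometric series --- precisely what the sharpness of Lemma \ref{l:qest_case1} is engineered to provide.
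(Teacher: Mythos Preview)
Your treatment of part (i) matches the paper's argument: translation invariance reduces the BMO bound to Corollary \ref{outwholeestimate}. You omit one step the paper carries out explicitly: Corollary \ref{outwholeestimate} is proved only for $f\in C_c^\infty(\bR^{d+1})$, so for general $f\in L_2\cap L_\infty$ one approximates by $f_n\in C_c^\infty$ with $\|f_n\|_{L_\infty}\le\|f\|_{L_\infty}$ and $Lf_n\to Lf$ a.e., then applies Fatou's lemma to the double average.

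For the diagonal case $p=q$ in part (ii) your route differs from the paper's and is considerably harder than necessary. The paper does \emph{not} prove a pointwise sharp-function bound. Instead it observes that part (i) gives $\|(Lf)^{\#}\|_{L_\infty}=\|Lf\|_{BMO}\le C\|f\|_{L_\infty}$, while Lemma \ref{22estimate} together with Theorem \ref{feffermanstein} gives $\|(Lf)^{\#}\|_{L_2}\le C\|Lf\|_{L_2}\le C\|f\|_{L_2}$. Since $f\mapsto(Lf)^{\#}$ is sublinear, Marcinkiewicz interpolation yields $\|(Lf)^{\#}\|_{L_p}\le C\|f\|_{L_p}$ for $p\in[2,\infty)$, and another application of Theorem \ref{feffermanstein} closes the argument; duality handles $p\in(1,2)$. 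Your proposed pointwise bound $(Lf)^{\#}\le CM|f|+C(M|f|^2)^{1/2}$ would also work, but the far-field dyadic decomposition you sketch is not a mere ``rerun'' of Lemmas \ref{outwholetimeestimate}--\ref{outinspaceestimate}: those lemmas pull $\|f\|_{L_\infty}$ out of integrals in which the spatial variable has already been integrated, so replacing the sup-norm by local averages requires reorganizing each proof. Moreover, the term $(M|f|^2)^{1/2}$ is controlled in $L_p$ only for $p>2$, so your approach would still need duality for $p\in(1,2)$ --- it does not give all $p\in(1,\infty)$ directly as you claim.

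For the mixed-norm step there is a genuine error. The vector-valued H\"ormander condition you need is in the \emph{time} variable: one must bound $\|q(t-s_1,\cdot)-q(t-s_2,\cdot)\|_{L_1(\bR^d)}$, which after the fundamental theorem of calculus involves $\partial_t q=\mathcal{L}^2 p_d$, not the spatial gradient $D_xq$. The estimate \eqref{q1upper} you invoke controls $\int\!\!\int|D_xq|$ and is irrelevant here. The correct input is \eqref{eqn 09.23.16:51} with $k=2$, namely $\int_{\bR^d}|\mathcal{L}^2p_d(s,y)|\,dy\le Cs^{-2}$, which gives
\[
\|G(t,s_1)-G(t,s_2)\|_{L_p\to L_p}\le\int_{\bR^d}|q(t-s_1,y)-q(t-s_2,y)|\,dy\le C\,\frac{|s_1-s_2|}{(t-(t_0+\delta))^2}
\]
for $s_1,s_2\in[t_0,t_0+\delta)$ and $t\notin[t_0-\delta,t_0+2\delta)$; integrating in $t$ yields the H\"ormander bound, and then the Banach-space Calder\'on--Zygmund theorem (as in \cite{krylov2001caideron}) together with the diagonal $L_p(\bR^{d+1})$ bound gives $L_q(\bR;L_p)$ for $1<q\le p$, with duality for $q>p$.
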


\begin{proof}

(i) Note that for any $(t_0,x_0)\in\bR^{d+1}$, 
\begin{align*}
Lf (t+t_0,x+x_0)&= \int_{-\infty}^{t+t_0}  \int_{\bR^d} q(t+t_0-s,x+x_0-y)f(s,y) dy ds
\\
&= \int_{-\infty}^{t}  \int_{\bR^d} q(t-s,x-y)f(s+t_0,x_0+y) dy ds
\\
&=L \left(f(\cdot+t_0,\cdot+x_0)\right)(t,x).
\end{align*}
Hence, by denoting $\tilde{f}(t,x):=f(t+t_0, x+x_0)$,
\begin{align*}
\aint_{Q_b(t_0,x_0)} |L f(t,x)-(L f)_{Q_b(t_0,x_0)}| dtdx
= \aint_{Q_b} |L \tilde{f}(t,x)-(L \tilde{f})_{Q_b}| dtdx.
\end{align*}
Moreover, since  $L_{\infty}$-norm is invariant under the translation,  for the proof of   (i)  it suffices to prove \eqref{eqn 09.28.15:28}, which we already proved when $f\in C_c^\infty (\bR^{d+1})$. 

Now let $f\in L_2(\bR^{d+1})\cap L_\infty(\bR^{d+1})$. We can take a sequence of functions $f_n\in C_c^\infty(\bR^{d+1})$ such that $L f_n \to L f \ (a.e.)$, and $\|f_n\|_{L_\infty(\bR^{d+1})}\leq \|f\|_{L_\infty(\bR^{d+1})}$. Then by Fatou's lemma,
\begin{align*}
\aint_{Q_b} |L f(t,x)-(L f)_{Q_b}| dtdx
& \leq \aint_{Q_b} \aint_{Q_b} |L f(t,x)-L f(s,y)| dtdxdsdy
\\
& \leq  \liminf_{n\to \infty} \aint_{Q_b} \aint_{Q_b} |L f_n(t,x)-L f_n(s,y)| dtdxdsdy
\\
& \leq C  \liminf_{n\to \infty} \|f_n\|_{L_\infty(\bR^{d+1})}
\leq C \|f\|_{L_\infty(\bR^{d+1})}.
\end{align*}
Hence, we have the first assertion.

(ii) 
\textbf{Step 1.} We prove \eqref{qpestimate} for the case $p=q$. By Lemma \ref{22estimate} and Theorem \ref{feffermanstein}, for any $f\in L_2(\bR^{d+1})\cap L_\infty(\bR^{d+1})$, it holds that $\|(L f)^{\#}\|_{L_2(\bR^{d+1})}\leq C \|f\|_{L_2(\bR^{d+1})}$.
By the first assertion, we have
$
\|(L f)^{\#}\|_{L_\infty(\bR^{d+1})}\leq C \|f\|_{L_\infty(\bR^{d+1})}$.
The linearity of $L$ implies the sublinearity of the map $f \to (L f)^{\#}$. Hence, by a version of the Marcinkiewicz interpolation theorem, for any $p\in [2,\infty)$ there exists a constant $C$ such that
\begin{equation*}
\|(L f)^{\#}\|_{L_p(\bR^{d+1})}\leq C \|f\|_{L_p(\bR^{d+1})}
\end{equation*}
for all $f\in L_2(\bR^{d+1})\cap L_\infty(\bR^{d+1})$. Finally, by Theorem \ref{feffermanstein}, we get
\begin{equation*}
\|L f\|_{L_p(\bR^{d+1})}\leq C \|f\|_{L_p(\bR^{d+1})}.
\end{equation*}
Therefore, we  have \eqref{qpestimate} for $p\in[2,\infty)$. For $p\in(1,2)$, use the standard duality argument.

\textbf{Step 2.} Now we prove (\ref{qpestimate}) for general $p,q\in(1,\infty)$.  Define $q(t,x):=0$ for $t\leq 0$. For each $(t,s)\in\bR^2$, we define the operator $G(t,s)$ as follows:
\begin{equation*}
G(t,s)f(x):=\int_{\bR^d} q(t-s,x-y)f(y) dy, \quad f\in C_c^\infty(\R^{d}).
\end{equation*}
Let $p\in(1,\infty)$. Then, by \eqref{eqn 09.23.16:51} with $k=1$,
\begin{align*}
\|G(t,s)f\|_{L_p(\R^{d})}&=\left\|\int_{\bR^d} q(t-s,x-y)f(y) dy\right\|_{L_p(\R^{d})}
\leq \|f\|_{L_p(\R^{d})} \int_{\bR^d} |q(t-s,y)|dy \leq  C |t-s|^{-1}\|f\|_{L_p(\R^{d})}.
\end{align*}
Hence, we can extend the operator $G(t,s)$  to $L_p(\R^{d})$ for $t\neq s$. Denote
\begin{equation*}
A:=[t_0,t_0+\delta), \quad A^*:=[t_0-\delta,t_0+2\delta), \quad  \delta>0.
\end{equation*}
Note that for $t\notin A^*$ and $s_1,s_2\in A$, we have
$|s_1-s_2|\leq\delta$, and $|t-(t_0+\delta)|\geq\delta$.
Thus for such $t,s_{1},s_{2}$ and for any $f\in L_p(\R^{d})$ with $\|f\|_{L_p(\R^{d})}=1$,
\begin{align*}
\|G(t,s_1)f-G(t,s_2)f\|_{L_p(\R^{d})}
&=\left\| \int_{\bR^d}  \left(q(t-s_1,x-y)-q(t-s_2,x-y)\right)f(y) dy\right\|_{L_p(\R^{d})}
\\
&\leq \|f\|_{L_p(\R^{d})} \int_{\bR^d} \left|q(t-s_1,x-y)-q(t-s_2,x-y)\right| dy dx
\\
&\leq C \int_{\bR^d} \int_0^1 |\partial_{t}q(t-us_1+(1-u)s_2,x)| |s_1-s_2| du dx
\\
&= C \int_{\bR^d} \int_0^1 |\mathcal{L}^{2}p_{d}(t-us_1+(1-u)s_2,x)| |s_1-s_2| du dx
\leq  C \frac{|s_1-s_2|}{(t-(t_0+\delta))^2}
\end{align*}
due to \eqref{eqn 09.23.16:51} with $k=2$ (recall that $G(t,s)=0$ for $t\leq s$). Hence, we have
\begin{align*}
\|G(t,s_1)-G(t,s_2)\|_{\Lambda} \leq  C \frac{|s_1-s_2|}{(t-(t_0+\delta))^2}.
\end{align*}
where $\|\cdot\|_{\Lambda}$ denotes the norm of a linear operator $\Lambda$ on $L_p(\R^{d})$. Therefore,
\begin{align*}
&\int_{\bR\setminus A^*} \|G(t,s_1)-G(t,s_2)\|_{\Lambda} dt \leq C \int_{\bR\setminus A^*}\frac{|s_1-s_2|}{(t-(t_0+\delta))^2} dt
\\
&\leq C|s_1-s_2|\int_{|t-(t_0+\delta)|\geq \delta}\frac{1}{(t-(t_0+\delta))^2} dt \leq C \delta \int_\delta^\infty t^{-2}dt \leq C.
\end{align*}
Furthermore, by following the proof of \cite[Theorem 1.1]{krylov2001caideron}, one can easily check that for almost every $t$ outside of the support of $f\in C_c^\infty(\bR;L_p(\R^{d}))$,
\begin{equation*}
L f(t,x)=\int_{-\infty}^\infty G(t,s)f(s,x)ds
\end{equation*}
where $L$ denotes the  extension  to  $L_p(\bR^{d+1})$ which is verified in Step 1. Hence, by the Banach space-valued version of the Calder\'on-Zygmund theorem (e.g. \cite[Theorem 4.1]{krylov2001caideron}), our assertion is proved for $1<q\leq p$. For $1<p<q<\infty$, again use the duality argument.
The theorem is proved.
\end{proof}

\medskip

\mysection{Proof of Theorem \ref{main theorem2}}

In this section, we will prove Theorem \ref{main theorem2}. Due to Lemma \ref{basicproperty}, we only need to prove case $\gamma=0$.

\vspace{1mm}

\noindent{\textbf{Step 1}} (Existence and estimation of solution). 
\\
First assume $f\in C_c^\infty(\bR^{d+1}_+)$, and let $u(t,x)$ be a function with representation \eqref{u=qf}. Using Remark \ref{Hvaluedconti} and the integrability of $p_{d}(t,x)$, we can easily check 
 $D^m_x u$, $\mathcal{L}D^m_x u$ $\in C([0,T];L_p)$, and thus $u\in C^{\infty}_p([0,T]\times\bR^d)$. Also, by Lemma \ref{u=qfsolution},  $u$ satisfies equation \eqref{mainequation2}.
 
Now we show estimation \eqref{mainestimate2} and \eqref{eqn 05.27.14:12}. Take $\eta_{k}=\eta_{k}(t)\in C^{\infty}(\bR)$ such that $0\leq \eta_{k}\leq 1$, $\eta_{k}(t)=1$ for $t\leq T+1/k$ and $\eta_{k}(t)=0$ for $t\geq T+2/k$. It is easy to see that $f\eta_{k}\in L_{q}(\bR;L_{p}(\bR^{d}))$, and $f(t)=f\eta_{k}(t)$ for $t\leq T$. Hence, by Theorem \ref{thm 05.14.18:28} (ii), we have
\begin{equation*}
\begin{aligned}
\|\mathcal{L}u\|_{\mathbb{L}_{q,p}(T)} &= \|L f \|_{\mathbb{L}_{q,p}(T)} = \|L(f\eta_{k})\|_{\mathbb{L}_{q,p}(T)} 
\leq \|L(f\eta_{k})\|_{L_{q}(\bR;L_{p}(\bR^{d}))}\leq C \|f\eta_{k}\|_{L_{q}(\bR;L_{p}(\bR^{d}))}.
\end{aligned}
\end{equation*}
Hence, letting $k\to\infty$, we have
$\|\mathcal{L}u\|_{\mathbb{L}_{q,p}(T)} \leq C \|f\|_{\mathbb{L}_{q,p}(T)}
$
by the dominated convergence theorem. Also, by the relation $\|p_{d}(t,\cdot)\|_{L_{1}(\bR^{d})} =1$ and Minkowski's inequality, we can easily check that
$
\|u\|_{\mathbb{L}_{q,p}(T)} \leq C(T) \|f\|_{\mathbb{L}_{q,p}(T)}
$.
Therefore, using the above inequalities and \eqref{eqn 03.25.15:03} we obtain estimation  \eqref{mainestimate2} and \eqref{eqn 05.27.14:12}.
For general $f$,  we take a sequence of functions   $f_{n}\in \Ccinf(\R^{d+1}_{+})$  such that $f_n \to f$ in $\bL_{q,p}(T)$. Let $u_{n}$  denote the solution with representation  \eqref{u=qf} with $f_{n}$ in place of $f$. Then \eqref{mainestimate2} applied to $u_m-u_n$ shows that $u_{n}$ is a Cauchy sequence in 
$\bH^{\psi,2}_{q,p,0}(T)$. By taking $u$ as the limit of $u_{n}$ in $\bH^{\psi,2}_{q,p,0}(T)$, we find that $u$ satisfies \eqref{mainequation2} and estimation  \eqref{mainestimate2} and \eqref{eqn 05.27.14:12} also holds for $u$. 

\noindent\textbf{Step 2} (Uniqueness of solution).  
\\
Let  $u\in \bH_{q,p,0}^{\psi,2}(T)$ be a solution to equation \eqref{mainequation2} with $f=0$. Take $u_{n}\in \Ccinf(\bR^{d+1}_{+})$ which converges to $u$ in $\mathbb{H}^{\psi,2}_{q,p}(T)$, and let $f_{n}:=\partial^{\alpha}_{t}u_{n}-\mathcal{L}u_{n}$. Then by Lemma \ref{u=qfsolution}, $u_{n}$ satisfies representation \eqref{u=qf} with $f_{n}$. Hence, by the argument in \textbf{Step 1}, we have
$
\|u_{n}\|_{\mathbb{H}^{\psi,2}_{q,p}(T)} \leq C(T)  \|f_{n}\|_{\mathbb{L}_{q,p}(T)}
$.
Since $f_{n}=\partial_{t}u_{n}-\mathcal{L}u_{n}$ converges to $0$ due to the choice of $u_{n}$, we conclude that $u=0$ in $\mathbb{H}^{\psi,2}_{q,p}(T)$. The theorem is proved.

\mysection{acknowledgements}
The authors are sincerely grateful to the anonymous referee for careful reading, valuable comments, and finding errors. The paper could be considerably improved by the referee's comments.

\appendix

\mysection{}\label{s:App}
We first give the proof of Lemma \ref{basicproperty}.
\begin{proof}[Proof of Lemma \ref{basicproperty}]
(i) Let $u_{n}\in\mathbb{H}^{\psi,\gamma+2}_{q,p,0}(T)$ converge to $u$ in $\mathbb{H}^{\psi,\gamma+2}_{q,p}(T)$ and for each $n$, let $u_{n,k}\in C^{\infty}_{p}([0,T]\times\bR^{d})$ be a defining sequence of $u_{n}$ such that $u_{n,k}(0,\cdot)=0$. Then for any given $\varepsilon>0$, we can choose $n$ and $k$ such that
$$
\|u - u_{n}\|_{\mathbb{H}^{\psi,\gamma+2}_{q,p}(T)} \leq \varepsilon/2, \quad \|u_{n} - u_{n,k}\|_{\mathbb{H}^{\psi,\gamma+2}_{q,p}(T)}\leq \varepsilon/2,
$$
and this certainly shows that $u\in \mathbb{H}^{\psi,\gamma+2}_{q,p,0}(T)$.

(ii) Due to the definition of $\mathbb{H}^{\psi,\gamma+2}_{q,p,0}(T)$, we only need to show that for given $u\in C^{\infty}_{p}([0,T]\times\bR^{d})$ with $u(0,\cdot)=0$, there exists a sequence of functions $u_{n}\in \Ccinf(\bR^{d+1}_{+})$ which converges to $u$ in $\mathbb{H}^{\psi,\gamma+2}_{q,p}(T)$. Moreover, using Remark \ref{Hvaluedconti} (ii) and considering a multiplication with a smooth cut-off function of $x$, we can further assume that $u$ has compact support,  that is, $u(t,x)=0$ whenever $|x|>R$ for some $R>0$.

Extend $u=0$ for $t\notin[0,T]$. Take a nonnegative smooth function $\eta_1\in C_c^\infty((1,2))$ so that
$\int_0^\infty \eta_1(t)dt=1$.
For $\varepsilon >0$, we define
$\eta_{1,\varepsilon}(t)=\varepsilon^{-1}\eta_1(t/\varepsilon)$, and
\begin{equation*}
u^{\varepsilon}(t,x)=\eta(t)\int_0^\infty u(s,x)\eta_{1,\varepsilon}(t-s)ds,
\end{equation*}
where $\eta\in C^\infty([0,\infty))$ such that $\eta(t)=1$ for all $t\leq T+1$ and vanishes for all large $t$. Then, since  $\eta_1\in C_c^\infty((1,2))$, 
\begin{align*}
u^{\varepsilon}(t,x)=0 \qquad \forall t<\varepsilon, \quad \forall x\in\bR^d,
\end{align*}
and $u^{\varepsilon}\in\Ccinf(\R^{d+1}_{+})$. Since $u(0,x)=0$, one can prove 
\begin{equation*}
\partial_t u^{\varepsilon}(t)=(\partial_t u)^{\varepsilon}(t), \quad t\leq T.
\end{equation*}
Therefore, for any $n\in \bN$,
$$
\|u^{\varepsilon}-u\|_{L_q([0,T];H^{2n}_p)}+\|\partial_t u^{\varepsilon}-\partial_t u \|_{L_q([0,T];H^{2n}_p)} \to 0
$$
as $\varepsilon \downarrow 0$.  This and Remark \ref{Hvaluedconti} (ii) implies that $u_n:=u^{1/n}$ converges to $u$ in $\mathbb{H}^{\psi,\gamma+2}_{q,p}(T)$.
Therefore, (ii) is proved.

(iii) Since $(1-\mathcal{L})^{\nu/2}$ is an isometry from $H^{\psi,\gamma+2}_{p}$ to $H^{\psi,\gamma-\nu+2}_{p}$, we can easily prove the desired result. The lemma is proved.

\end{proof}

Now we give some auxiliary results related to completely monotone functions. We recall the following definition;
\begin{definition}
We say that a function $f :(0,\infty) \to (0,\infty)$ is completely monotone if $(-1)^{n}f^{(n)} \geq 0$ for any $n \in \bN$.
\end{definition}
Also, recall that the following notation
$$
\mathcal{T}f(r):=-\frac{1}{r}f'(r).
$$
\begin{lemma}\label{lem 08.31.16:04}
Let $f$ be a completely monotone function. 

(i) For $n\in\bN$, define
$$
f_{1}(r):= \mathcal{T}f(r):=-\frac{1}{r}f'(r) ,\quad f_{n}(r):=\mathcal{T}f_{n-1}(r).
$$
Then $f_{n}$ is nonnegative decreasing function in $r\in(0,\infty)$.

(ii) Let $\nu(r):=r^{-d}f(r)$ and suppose 
\begin{equation}\label{eqn 08.26.15:25}
(-1)^{k}f^{(k)}(r) \leq C_{k}r^{-k}f(r) \quad \forall\, r>0 \quad k\in \bN,
\end{equation}
where the constant $C_{k}$ is independent of $r$. Then, we have
$$
r^{-n} \nu(r) \leq (-1)^{n}\nu^{(n)}(r) \leq C r^{-n}\nu(r) \quad \forall\, r\in(0,\infty), \quad \forall\, n\in\bN,
$$
where the constant $C$ depends only on $d,n,C_{1},\dots,C_{n}$.
\end{lemma}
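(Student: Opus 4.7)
For part (i), my plan is to establish the stronger statement that $\mathcal{T}f$ is itself completely monotone whenever $f$ is; iterating this yields complete monotonicity (hence nonnegativity) of every $f_{n}$, and the decreasing property then follows automatically from the identity $f_{n}'(r) = -r\,f_{n+1}(r) \leq 0$. To prove $\mathcal{T}f$ completely monotone, I would apply the Leibniz rule to the product $\mathcal{T}f(r) = (-r^{-1}) \cdot f'(r)$. Using $\frac{d^{k}}{dr^{k}}(r^{-1}) = (-1)^{k} k!\, r^{-k-1}$, a direct computation gives
\begin{equation*}
(-1)^{n}(\mathcal{T}f)^{(n)}(r) = \sum_{k=0}^{n} \binom{n}{k}\, k!\, r^{-k-1}\, (-1)^{n+1-k} f^{(n+1-k)}(r),
\end{equation*}
where each factor $(-1)^{n+1-k}f^{(n+1-k)}(r)$ is nonnegative by the complete monotonicity of $f$ (the surviving orders range over $1 \leq n+1-k \leq n+1$). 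This shows $(-1)^{n}(\mathcal{T}f)^{(n)} \geq 0$ for every $n \in \bN$, as desired.

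For part (ii), I would expand $\nu^{(n)}$ via Leibniz using the identity $(r^{-d})^{(k)} = (-1)^{k} d(d+1)\cdots(d+k-1)\, r^{-d-k}$, obtaining
\begin{equation*}
(-1)^{n}\nu^{(n)}(r) = \sum_{k=0}^{n} \binom{n}{k}\, d(d+1)\cdots(d+k-1)\, r^{-d-k}\, (-1)^{n-k} f^{(n-k)}(r),
\end{equation*}
and every summand is nonnegative by complete monotonicity of $f$. For the upper bound I would apply the hypothesis \eqref{eqn 08.26.15:25} to each factor, $(-1)^{n-k}f^{(n-k)}(r) \leq C_{n-k} r^{-(n-k)}f(r)$ (with $C_{0}=1$), and collect powers of $r$; this yields $r^{-n-d}f(r) = r^{-n}\nu(r)$ multiplied by a combinatorial constant depending only on $d, n, C_{1}, \ldots, C_{n}$. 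For the lower bound I would simply discard all but the $k=n$ summand, which alone contributes $d(d+1)\cdots(d+n-1)\, r^{-d-n}f(r) \geq r^{-n}\nu(r)$ since $d \geq 1$.

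The main subtlety is the sign bookkeeping in the Leibniz expansion used in (i): one must track carefully the interaction between the factor $-r^{-1}$ and the shift from $f$ to $f'$, and verify that the parity of $n+1-k$ matches the order of the surviving derivative of $f$ so that every term in the alternating sum is genuinely nonnegative. Once this sign check is carried out, the remainder of the argument is a routine induction on $n$ together with elementary manipulations; part (ii) is then a direct Leibniz computation with no further surprises.
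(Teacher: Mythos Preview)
Your argument is correct. Part (ii) is essentially identical to the paper's own proof: both expand $(-1)^{n}\nu^{(n)}$ by Leibniz, obtain the lower bound by keeping the single term $k=n$, and obtain the upper bound by applying the hypothesis \eqref{eqn 08.26.15:25} to each summand.

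For part (i), however, you take a genuinely different route. The paper derives by induction an explicit closed-form expression
\[
f_{n}(r)=\sum_{k=0}^{n} C_{n,k}\Bigl(-\tfrac{1}{r}\Bigr)^{2n-k} f^{(k)}(r)
\]
with nonnegative combinatorial coefficients $C_{n,k}$, and then observes that each term $C_{n,k}\,r^{-(2n-k)}\,(-1)^{k}f^{(k)}(r)$ is a nonnegative decreasing function, so their sum is as well. Your approach is instead to prove once and for all that $\mathcal{T}$ preserves complete monotonicity, via the single Leibniz identity you wrote down, and then iterate. This is cleaner and yields the stronger conclusion that every $f_{n}$ is completely monotone, not merely nonnegative and decreasing; the paper's formula, on the other hand, gives more explicit structural information about how $f_{n}$ decomposes in terms of derivatives of the original $f$, which can be useful if one later needs pointwise bounds on $f_{n}$ (compare Remark \ref{rmk 09.27.14:11}). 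One minor point: with the paper's convention that complete monotonicity requires the codomain $(0,\infty)$, you may have $\mathcal{T}f\equiv 0$ on a half-line if $f$ is eventually constant, so strictly speaking you are proving the weaker property $(-1)^{m}(\mathcal{T}f)^{(m)}\ge 0$ for all $m\ge 0$; this is harmless, since that is all the iteration and the conclusion of the lemma require.
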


\begin{proof}
(i) We first show that 
\begin{equation}\label{eqn 07.19.16:19}
f_{n}(r) = \sum_{k=0}^{n} C_{n,k}\left( -\frac{1}{r} \right)^{2n-k} f^{(k)}(r),
\end{equation}
where $C_{n,0}=0$, $C_{n,n}=1$ for $n\geq 1$, and $C_{n,k} = (2n-k) C_{n-1,k} + C_{n-1,k-1} >0$ for $n\geq 2$, $k\leq n-1$.
Due to the definition of $\mathcal{T}$, it is easy to check that $f_{1}$ satisfies \eqref{eqn 07.19.16:19}. Now suppose that $f_{n}$ satisfies \eqref{eqn 07.19.16:19}. Then direct computation yields
\begin{align*}
f_{n+1}(r) = -\frac{1}{r} \frac{d}{dr}f_{n}(r) =  \sum_{k=1}^{n} A_{n,k}  \frac{f^{(k)}(r)}{r^{2(n+1)-k}} + B_{n,k} \frac{f^{(k+1)}(r)}{r^{2(n+1)-(k+1)}},
\end{align*}
where $A_{n,k} = (-1)^{2(n+1)-k}(2n-k)C_{n,k}$, $B_{n,k} = (-1)^{2(n+1)-(k+1)}C_{n,k}$. Hence, due to the choice of $C_{n,k}\geq0$ above , it follows that
$$
f_{n+1}(r) = \sum_{k=0}^{n+1} C_{n+1,k} \left( - \frac{1}{r} \right)^{2(n+1)-k}f^{(k)}(r)
$$
Thus \eqref{eqn 07.19.16:19} also holds for $n+1$. By induction argument, \eqref{eqn 07.19.16:19} holds for all $n\in\bN$.

Since $f$ is completely monotone, we can check that $(-1)^{k}f^{(k)}$ is nonnegative and decreasing for all $k=1,2,\dots$. Thus for each $n\geq 1$,
\begin{align*}
f_{n}(r) = \sum_{k=0}^{n} C_{n,k}\left( -\frac{1}{r} \right)^{2n-k} f^{(k)}(r) & = \sum_{k=0}^{n} C_{n,k}\left(\frac{1}{r} \right)^{2n-k} (-1)^{2n-k}f^{(k)}(r)
 = \sum_{k=0}^{n} C_{n,k}\left(\frac{1}{r} \right)^{2n-k} (-1)^{k}f^{(k)}(r)
\end{align*}
is a nonnegative decreasing function. 

(ii) By the product rule of differentiation, we have
\begin{align*}
(-1)^{n} \nu^{(n)}(r) = \sum_{k=0}^{n} \binom{n}{k}(-1)^{n+k}D_{n,k}r^{-d-k}f^{(n-k)}(r),
\end{align*}
where $D_{n,0}=1$ $D_{n,1}=d$ and $D_{n,k}=d \times \dots \times (d+k-1)$ (for $2\leq k\leq n$). Using $(-1)^{n+k}f^{(n-k)}=(-1)^{n-k}f^{(n-k)}$ and complete monotonicity of $f$, we can easily see that $(-1)^{n}\nu^{(n)}(r) \geq r^{-n}\nu(r)$. Also, using the assumption on $f$, we have
\begin{align*}
(-1)^{n}\nu^{(n)}(r) &= \sum_{k=0}^{n} \binom{n}{k}(-1)^{n+k}D_{n,k}r^{-d-k}f^{(n-k)}(r)
\\
& = \sum_{k=0}^{n} \binom{n}{k}(-1)^{n-k}D_{n,k}r^{-d-k}f^{(n-k)}(r)
\\
& \leq \sum_{k=0}^{n} \binom{n}{k}D_{n,k}C_{k}r^{-d-n}f(r) = \left( \sum_{k=0}^{n} \binom{n}{k}D_{n,k}C_{k}\right) r^{-n}\nu(r).
\end{align*}
Therefore, by taking 
$C= \sum_{k=0}^{n} \binom{n}{k}D_{n,k}C_{k}$,
we have the desired result. The lemma is proved.
\end{proof}

\begin{lemma}\label{lem 08.31.16:04-2}
Let $\alpha \in (0,1)$.

(i) For each $k\in \bN$,
$$
g(r):=\left(  \frac{r^{-\alpha}}{1+r^{-\alpha}}  \right)^{k}
$$
is a competely monotone function satisfying \eqref{eqn 08.26.15:25}.

(ii) Let $f(r):= \log{(1+r^{-\alpha})}$. Then  $f$ is a completely monotone function satisfying \eqref{eqn 08.26.15:25}.
\end{lemma}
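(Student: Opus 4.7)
For part (i), I would first rewrite
\[
g(r)=\left(\frac{r^{-\alpha}}{1+r^{-\alpha}}\right)^{k}=(1+r^{\alpha})^{-k},
\]
then establish complete monotonicity by composition: $\phi(r)=r^{\alpha}$ is a Bernstein function for $\alpha\in(0,1)$, and $h(u)=(1+u)^{-k}=\frac{1}{(k-1)!}\int_{0}^{\infty}t^{k-1}e^{-(1+u)t}\,dt$ is completely monotone as the Laplace transform of a positive measure; the standard fact that $h\circ \phi$ is completely monotone whenever $h$ is CM and $\phi$ is Bernstein then yields the conclusion. For the scaling condition \eqref{eqn 08.26.15:25}, I would apply Fa\`a di Bruno to $g=h\circ\phi$: each term is of the form $h^{(m)}(r^{\alpha})\prod_i (r^{\alpha})^{(k_i)}$ with $\sum k_i=n$ and $\sum 1 = m\le n$, producing a factor $r^{m\alpha-n}$. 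Since $|h^{(m)}(u)|\le C_m(1+u)^{-k-m}$, each term is bounded by
\[
C\,r^{-n}\left(\frac{r^{\alpha}}{1+r^{\alpha}}\right)^{m}(1+r^{\alpha})^{-k}\le C\,r^{-n}g(r),
\]
because $\frac{r^{\alpha}}{1+r^{\alpha}}\le 1$. Summing the finitely many terms gives \eqref{eqn 08.26.15:25}.

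For part (ii), the starting point is the direct differentiation
\[
-f'(r)=\frac{\alpha\,r^{-\alpha-1}}{1+r^{-\alpha}}=\frac{\alpha}{r\,(1+r^{\alpha})}=\alpha\,r^{-1}\,g_{1}(r),
\]
where $g_{1}(r)=(1+r^{\alpha})^{-1}$ is the $k=1$ case of part (i), hence CM. Since $r\mapsto r^{-1}$ is CM and the product of two CM functions is CM, we get that $-f'$ is CM; because $f(r)>0$ (evident from $\log(1+r^{-\alpha})>0$), this shows $f$ itself is CM: indeed $(-1)^{n}f^{(n)}(r)=(-1)^{n-1}(-f')^{(n-1)}(r)\ge 0$ for $n\ge 1$.

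For the scaling bound on $f$, I would apply the Leibniz rule to $-f'=\alpha\,r^{-1}g_{1}$, obtaining
\[
|f^{(n)}(r)|=|(-f')^{(n-1)}(r)|\le C\sum_{j=0}^{n-1}r^{-(n-j)}|g_{1}^{(j)}(r)|\le C\,r^{-n}\,g_{1}(r)
\]
using part (i) applied to $g_{1}$. It then remains to check that
\begin{equation*}
g_{1}(r)=\frac{1}{1+r^{\alpha}}\le C\,\log(1+r^{-\alpha})=C\,f(r)\qquad \forall\,r>0.
\end{equation*}
This is a simple asymptotic comparison: as $r\to 0$ the left side tends to $1$ while the right side blows up like $-\alpha\log r$; as $r\to\infty$, both sides behave like $r^{-\alpha}$, so the ratio $g_{1}/f$ has finite positive limits at both ends and is continuous and positive on $(0,\infty)$, hence uniformly bounded. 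This yields $|f^{(n)}(r)|\le C_{n}r^{-n}f(r)$.

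The only delicate step is the last inequality $g_{1}\le C f$, since the two functions have genuinely different character near $0$; but the matching rates of decay at infinity and the compactness argument on bounded intervals resolve it cleanly. Everything else is bookkeeping with Bernstein-function composition, Leibniz, and Fa\`a di Bruno.
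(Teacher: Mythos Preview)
Your proof is correct. Part (ii) is essentially the paper's argument: the paper also factors $-f'(r)=\alpha\cdot r^{-1}\cdot \frac{r^{-\alpha}}{1+r^{-\alpha}}$, applies the Leibniz rule to bound higher derivatives by $Cr^{-n}g_1(r)$, and closes with $g_1\le Cf$ (stated there via the elementary inequality $(1+u)\log(1+u^{-1})\ge 1/2$ applied at $u=r^{\alpha}$, whereas you use a continuity-plus-asymptotics compactness argument; both yield the same bound).

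Part (i) is where you genuinely diverge. The paper first shows $g^{1/k}=1-\frac{r^{\alpha}}{1+r^{\alpha}}$ is completely monotone because $r^{\alpha}/(1+r^{\alpha})$ is Bernstein, then builds up to $g=(g^{1/k})^k$ by induction on $k$ using the product rule; for the scaling bound it computes $(-1)^n(g^{1/k})^{(n)}$ explicitly as $r^{-\alpha-n}P_{n-1}(r^{-\alpha})/(1+r^{-\alpha})^{n+1}$ for a polynomial $P_{n-1}$, extracts the bound, and again inducts on $k$. Your route---writing $g=(1+r^{\alpha})^{-k}$ as $h\circ\phi$ with $h$ completely monotone and $\phi$ Bernstein, then applying Fa\`a di Bruno and the exact formula $|h^{(m)}(u)|=C_m(1+u)^{-k-m}$---handles all $k$ at once and avoids both inductions. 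The paper's approach is more elementary (no Fa\`a di Bruno), while yours is shorter and conceptually cleaner; either is fine.
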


\begin{proof}
(i) It is known that $
\tilde{g}(r) =  r^{\alpha}/(1+r^{\alpha}) = 1/(1+r^{-\alpha}) 
$
is a Bernstein function (see \cite[Chapter 16]{SSV12}) since $\alpha\in(0,1)$. Thus 
$g^{1/k}(r) = 1-\tilde{g}(r) = r^{-\alpha}/(1+r^{-\alpha})
$
is completely monotone. Also, by the product rule of differentiation and the complete monotonicity of $g^{1/k}$,
\begin{align*}
(-1)^{n}(g^{2/k})^{(n)}(r) 
 =  \sum_{m=0}^{n} \binom{n}{m} (-1)^{m}(g^{1/k})^{(m)}(r) (-1)^{n-m}(g^{1/k})^{(n-m)}(r) \geq 0 \quad \forall \, n\in \bN.
\end{align*}
Hence, $g^{2/k}$ is completely monotone. Using induction, we can check that  $g$ is completely monotone. Also, one can check that for each $n\in \bN$,
$$
(-1)^{n}(g^{1/k})^{(n)}(r) = \frac{P_{n-1}(r^{-\alpha})}{(1+r^{-\alpha})^{n+1}}r^{-\alpha}r^{-n},
$$
where $P_{n-1}=P_{n-1}(r)$ is a polymonial of degree $n-1$. Moreover, since $g^{1/k}$ is completely monotone, we deduce that $P_{n-1}(r^{-\alpha})$ is nonnegative. Therefore, we have
\begin{align*}
(-1)^{n}(g^{1/k})^{(n)}(r) &= \frac{P_{n-1}(r^{-\alpha})}{(1+r^{-\alpha})^{n+1}}r^{-\alpha}r^{-n}
 \leq \frac{P_{n-1}(r^{-\alpha})}{(1+r^{-\alpha})^{n}}\frac{r^{-\alpha}}{(1+r^{-\alpha})}r^{-n}
\\
&\leq C(k,n) \frac{r^{-\alpha}}{(1+r^{-\alpha})}r^{-n} = C(k,n)g^{1/k}(r)r^{-n}.
\end{align*}
Using this and the product rule of differentiation, 
\begin{align*}
(-1)^{n}(g^{2/k})^{(n)}(r)
& = \sum_{m=0}^{n} \binom{n}{m} (-1)^{m}(g^{1/k})^{(m)}(r) (-1)^{n-m}(g^{1/k})^{(n-m)}(r)
\\
&\leq \sum_{m=0}^{n} \binom{n}{m} C(k,n,m) g^{1/k}(r)r^{-m} g^{1/k}(r) r^{-n+m} 
 \leq  C(n,k) g^{2/k}(r) r^{-n}.
\end{align*}
By induction, we can check that $g$ is a completely monotone function satisfying \eqref{eqn 08.26.15:25}.

(ii) Using the fact that $\log{(1+r^{-1})}$ is completely monotone function, and \cite[Theorem 3.7 (ii)]{SSV12} with Bernstein function $\phi(r)=r^{\alpha}$, we deduce that $f(r)$ is a completely monotone function. One can easily check that $(-1)f'(r)=\alpha/r(r^{\alpha}+1)$ and using the relation $(1+r)\log{(1+r^{-1})} \geq 1/2$ for all $r>0$, we have $(-1)f'(r) \geq r^{-1} f(r)$. Also, observe that
$$
(-1)f'(r) =  \alpha \frac{1}{r(r^{\alpha}+1)} = \alpha \frac{1}{r} \, \frac{r^{-\alpha}}{(1+r^{-\alpha})} := \alpha f_{1}(r) \, f_{2}(r),
$$
and $f_{1}$, and $f_{2}$ are completely monotone function satisfying \eqref{eqn 08.26.15:25} (for $f_{2}$ use the first assertion with $k=1$). Hence, using the product rule of differentiation and the first assertion, we have that for $n\geq 2$, 
\begin{align*}
(-1)^{n}f^{(n)}(r) &=\alpha (-1)^{n-1} \left( f_{1} \, f_{2} \right)^{(n-1)}(r)\nonumber \\
&=\alpha(-1)^{n-1} \sum_{m=0}^{n-1} \binom{n-1}{m} (-1)^{m}m! (-1)^{m}r^{-m-1} (-1)^{n-1-m}f^{(n-1-m)}_{2}(r)  \nonumber
\\
&\leq C \sum_{m=0}^{n-1} \binom{n-1}{m} m!  r^{-m-1}r^{-n+1+m}f_{2}(r) \leq C r^{-n} f(r).  
\end{align*}
Therefore, $f$ satisfies \eqref{eqn 08.26.15:25}. The lemma is proved.
\end{proof}
\begin{lemma}\label{l:qest_offd}
Let  $f:(0,\infty)\to(0,\infty)$ be a strictly increasing continuous function and $f^{-1}$ be its inverse. Suppose that there exist $c,\gamma>0$ such that $(f(R)/f(r))\le c(R/r)^\gamma$ for $0<r\le R<\infty$. Then, for any $k>0$, there exists $C>0$ such that for any $b>0$
\begin{align*}
\int_{(f(b^{-1}))^{-1}}^\infty s^{-1}f^{-1}(s^{-1})^k ds \le Cb^{-k}.
\end{align*}
\end{lemma}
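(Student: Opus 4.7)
The plan is to reduce the integral to a standard power-law integral by exploiting the polynomial growth hypothesis on $f$, which transfers to polynomial decay of $f^{-1}$ (rather than doing a change of variable $r = f^{-1}(s^{-1})$, which would require differentiability of $f$ that is not assumed).

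First I would translate the hypothesis $f(R)/f(r)\le c(R/r)^{\gamma}$ into a scaling bound for $f^{-1}$. Setting $u_1=f(r)$, $u_2=f(R)$ with $0<u_1\le u_2$, the hypothesis becomes $u_2/u_1 \le c(f^{-1}(u_2)/f^{-1}(u_1))^{\gamma}$, which rearranges to
\begin{equation*}
f^{-1}(u_1) \le c^{1/\gamma}\left(\frac{u_1}{u_2}\right)^{1/\gamma} f^{-1}(u_2) \quad \text{for } 0<u_1\le u_2.
\end{equation*}

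Next, note that for $s\ge (f(b^{-1}))^{-1}$ we have $s^{-1}\le f(b^{-1})$, so taking $u_1=s^{-1}$ and $u_2=f(b^{-1})$ (hence $f^{-1}(u_2)=b^{-1}$) yields
\begin{equation*}
f^{-1}(s^{-1})^{k} \le c^{k/\gamma}\,b^{-k}\,\bigl(s\,f(b^{-1})\bigr)^{-k/\gamma}.
\end{equation*}

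Plugging this into the integral gives
\begin{equation*}
\int_{(f(b^{-1}))^{-1}}^{\infty} s^{-1}f^{-1}(s^{-1})^{k}\,ds \le c^{k/\gamma}b^{-k}f(b^{-1})^{-k/\gamma}\int_{(f(b^{-1}))^{-1}}^{\infty} s^{-1-k/\gamma}\,ds = \frac{c^{k/\gamma}\gamma}{k}\,b^{-k},
\end{equation*}
which is the desired bound with $C=c^{k/\gamma}\gamma/k$. There is no real obstacle here; the only subtlety is to avoid a direct substitution $r=f^{-1}(s^{-1})$ (which would presuppose regularity of $f^{-1}$ beyond what is assumed) and instead work with the two-sided comparison of $f^{-1}$ that follows from the scaling hypothesis alone.
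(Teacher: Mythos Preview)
Your proof is correct. The paper does not give its own argument for this lemma but simply cites \cite[Lemma A.3]{KP21}; your self-contained derivation via the scaling bound $f^{-1}(u_1)\le c^{1/\gamma}(u_1/u_2)^{1/\gamma}f^{-1}(u_2)$ is exactly the natural proof and even produces the explicit constant $C=c^{k/\gamma}\gamma/k$.
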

\begin{proof}
See \cite[Lemma A.3]{KP21}.
\end{proof}

\begin{lemma}\label{l:qest_case1_pf} 
Suppose the function $\ell$ satisfies Assumption \ref{ell_con} (i). Then,
there exist $C_{1}=C_{1}(d,\text{\boldmath{$\kappa$}}_{3},\text{\boldmath{$\delta$}},\ell)$ and $C_{2}=C_{2}(d,\text{\boldmath{$\kappa$}}_{2},\text{\boldmath{$\delta$}},\ell)$such that for any $b>0$
\begin{gather}
\int_{(h(b))^{-1}}^\infty \int_{b\leq |y| \leq h^{-1}(s^{-1})} |D_{x}q(s,y)| dy ds \le C_{1}b^{-1},\label{q1upper_case1_pf}\\
\int_{(h(4b))^{-1}}^\infty \int_{|y|\leq 4b} |q(s,y)| dy ds \le C_{2}. \label{eqn 09.14.11:27}
\end{gather}
\end{lemma}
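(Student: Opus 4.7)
The plan is to estimate both integrals by splitting the time variable at a large threshold $T_{0}$ chosen so that the long-time bounds of Theorem~\ref{thm 09.13.17:01} become available (concretely, $T_{0} := \max(t_{1,0}, t_{1,1})$). On the short-time window $s \leq T_{0}$ I will use Theorem~\ref{cor 09.13.16:58}, whose virtue is the exponential factor $e^{-Csh(|y|)}$; on the long-time window $s \geq T_{0}$ I will use Theorem~\ref{thm 09.13.17:01}. If $(h(b))^{-1} \geq T_{0}$ the short-time contribution is simply empty, so no case analysis is needed.

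For \eqref{q1upper_case1_pf}, Theorem~\ref{cor 09.13.16:58} applied with $k=m=1$ gives $|D_{x}q(s,y)| \leq C K(|y|)|y|^{-d-1} e^{-Csh(|y|)}$. Interchanging the order of integration and evaluating the $s$-integral produces the factor $(C h(\rho))^{-1} e^{-C h(\rho)/h(b)}$, after which the inequality $K(\rho) \leq h(\rho)$ (which is immediate from $h = K + L$) leaves the elementary bound $\int_{b}^{\infty} \rho^{-2}\,d\rho = b^{-1}$, as required. For the long-time contribution, Theorem~\ref{thm 09.13.17:01} on the set $\{s h(|y|) \geq 1\} = \{|y| \leq h^{-1}(s^{-1})\}$ gives $|D_{x}q(s,y)| \leq C s^{-1}(h^{-1}(s^{-1}))^{-d-1}$; integrating over the ball of radius $h^{-1}(s^{-1})$ loses a factor $(h^{-1}(s^{-1}))^{d}$, and Lemma~\ref{l:qest_offd} applied with $f(r) = h(r^{-1})$ (whose upper scaling follows at once from \eqref{eqn 05.12:15:07}) and $k=1$ yields the desired $Cb^{-1}$.

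For \eqref{eqn 09.14.11:27}, note first that on the domain $|y| \leq 4b$, $s \geq (h(4b))^{-1}$ one always has $sh(|y|) \geq sh(4b) \geq 1$. The long-time piece is handled exactly as above: Theorem~\ref{thm 09.13.17:01} with $k=1$, $m=0$ gives $|q(s,y)| \leq C s^{-1}(h^{-1}(s^{-1}))^{-d}$, the spatial integration produces the factor $b^{d}$, and Lemma~\ref{l:qest_offd} with $k=d$ at scale $4b$ produces a $b$-independent constant. The short-time piece, after applying Theorem~\ref{cor 09.13.16:58} and the same Fubini trick as above, reduces to bounding
\begin{equation*}
\int_{0}^{4b} \frac{K(\rho)}{\rho\, h(\rho)}\, e^{-C h(\rho)/h(4b)}\, d\rho.
\end{equation*}
Here the naive estimate $K \leq h$ yields the non-integrable $\rho^{-1}$, so I will invoke the substitution $u = h(\rho)/h(4b)$: since $h'(\rho) = -2K(\rho)/\rho$, the Jacobian delivers exactly $K(\rho)(\rho\, h(4b))^{-1}\, d\rho = -\tfrac{1}{2}\, du$, and the integral collapses cleanly to $\tfrac{1}{2}\int_{1}^{\infty} u^{-1} e^{-Cu}\, du$, an absolute constant. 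This last substitution is the one delicate step in the whole argument; the cancellation between the Jacobian and the factor $K(\rho)/(\rho\, h(4b))$ coming from the $s$-integration is essential, and it is what encodes the fact that the near-diagonal mass of $q$ is naturally measured in the scale induced by $h$ rather than by Euclidean distance. All remaining estimates reduce either to Fubini plus elementary one-dimensional integration, or to a direct application of Lemma~\ref{l:qest_offd}.
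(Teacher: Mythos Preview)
Your argument is correct. For \eqref{q1upper_case1_pf} your treatment coincides with the paper's: both split into a small-time piece handled by Theorem~\ref{cor 09.13.16:58} and Fubini (yielding $\int_b^\infty \rho^{-2}\,d\rho$), and a large-time piece handled by Theorem~\ref{thm 09.13.17:01} and Lemma~\ref{l:qest_offd}.

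For the short-time contribution to \eqref{eqn 09.14.11:27} you take a genuinely different route. The paper avoids Fubini here: it writes $e^{-C^{-1}sh(|y|)} \leq e^{-C^{-1}sh(|y|)/2}\,e^{-C^{-1}sh(4b)/2}$, uses Lemma~\ref{rmk 05.07.16:12}\,(i) to bound the spatial integral by $Cs^{-1}$, and then integrates $s^{-1}e^{-C^{-1}sh(4b)/2}$ in $s$ by replacing $s^{-1}$ with $h(4b)$. Your approach instead performs the $s$-integral first, obtaining $\int_0^{4b} K(\rho)(\rho h(\rho))^{-1} e^{-Ch(\rho)/h(4b)}\,d\rho$, and then exploits the exact identity $h'(\rho)=-2K(\rho)/\rho$ via the substitution $u=h(\rho)/h(4b)$ to reduce to $\tfrac{1}{2}\int_1^\infty u^{-1}e^{-Cu}\,du$. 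This is clean and self-contained; it trades the appeal to Lemma~\ref{rmk 05.07.16:12}\,(i) for a direct computation, and the cancellation you identify between the Jacobian and the integrand is precisely the mechanism that makes the parabolic scale $h$ the natural one. Both arguments are short; yours is arguably more transparent about why the bound is dimensionless.
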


\begin{proof}
By Theorem \ref{cor 09.13.16:58}, and Theorem \ref{thm 09.13.17:01}
\begin{equation*}
\begin{aligned}
&\int_{(h(b))^{-1}}^\infty \int_{b\leq |y| \leq h^{-1}(s^{-1})} |D_{x}q(s,y)| dy ds
\\
&\leq C \int_{(h(b))^{-1}}^\infty \int_{b}^{h^{-1}(s^{-1})} K(\rho) \rho^{-2} e^{-C^{-1}sh(\rho)} d\rho  ds
\\
&\quad + C \int_{(h(b))^{-1}}^\infty \int_{b}^{h^{-1}(s^{-1})} \left( s^{-1} (h^{-1}(s^{-1}))^{-d-1}\mathbf{1}_{sh(\rho)\geq1}+ \frac{K(\rho)}{\rho^{d+1}} \mathbf{1}_{sh(\rho)\leq1}\right) \rho^{d-1} d\rho ds
\\
&\leq C \int_{(h(b))^{-1}}^\infty \int_{b}^{h^{-1}(s^{-1})} K(\rho) \rho^{-2} e^{-C^{-1}sh(\rho)} d\rho  ds
 + C \int_{(h(b))^{-1}}^\infty \int_{b}^{h^{-1}(s^{-1})} s^{-1} (h^{-1}(s^{-1}))^{-d-1} \rho^{d-1} d\rho ds,
\end{aligned}
\end{equation*}
where the last inequality holds since $sh(\rho)\leq 1$ is equivalent to $\rho\geq h^{-1}(s^{-1})$. By Fubini's theorem, we have
\begin{align}\label{eqn 05.30.18:42}
\int_{(h(b))^{-1}}^\infty \int_{b}^{h^{-1}(s^{-1})} K(\rho) \rho^{-2} e^{-C^{-1}sh(\rho)} d\rho  ds  
\leq C \int_{b}^{\infty}\int_{(h(\rho))^{-1}}^{\infty} K(\rho) \rho^{-2}  e^{-C^{-1}sh(\rho)} ds d\rho
\leq C \int_{b}^{\infty} \rho^{-2} d\rho
= C b^{-1}.
\end{align}
Also, by \eqref{l:qest_offd} with $f(r)=h(r^{-1})$, we have
\begin{align}\label{eqn 09.15.16:52}
\int_{(h(b))^{-1}}^\infty \int_{b}^{h^{-1}(s^{-1})} s^{-1} (h^{-1}(s^{-1}))^{-d-1}\rho^{d-1} d\rho ds
\leq C \int_{(h(b))^{-1}}^\infty  s^{-1} (h^{-1}(s^{-1}))^{-1}  ds \leq C b^{-1}.
\end{align}
Combining this with \eqref{eqn 05.30.18:42}, we have \eqref{q1upper_case1_pf}.

Now we prove \eqref{eqn 09.14.11:27}. Again by Theorem \ref{cor 09.13.16:58}, and Theorem \ref{thm 09.13.17:01}
\begin{align*}
&\int_{(h(4b))^{-1}}^\infty \int_{|y|\leq 4b} |q(s,y)| dy ds  \nonumber
\\
&\leq C \int_{(h(4b))^{-1}}^\infty \int_{|y| \leq 4b} \frac{K(|y|)}{|y|^{d}} e^{-C^{-1}sh(|y|)}  dy ds  \nonumber
\\
& \quad + C \int_{(h(4b))^{-1}}^\infty \int_{0}^{4b} \left( s^{-1}(h^{-1}(s^{-1}))^{-d}\mathbf{1}_{sh(\rho) \geq 1}  + \frac{K(\rho)}{\rho^{d}} \mathbf{1}_{sh(\rho) \leq 1}\right) \rho^{d-1}d\rho ds   \nonumber
\\
&\leq C \int_{(h(4b))^{-1}}^\infty \int_{0}^{4b} \rho^{-1} K(\rho) e^{-C^{-1}sh(\rho)}  d\rho ds  + C \int_{(h(4b))^{-1}}^\infty \int_{0}^{4b}  s^{-1}(h^{-1}(s^{-1}))^{-d} \rho^{d-1}d\rho ds,
\end{align*}
where the second inequality holds since $sh(\rho)\geq 1$ for $\rho\leq 4b$ and $s\geq (h(4b))^{-1}$. By Lemma \ref{rmk 05.07.16:12} (i), we can check that 
\begin{align}\label{eqn 09.15.14:24}
\int_{(h(4b))^{-1}}^\infty \int_{|y| \leq 4b} \frac{K(|y|)}{|y|^{d}} e^{-C^{-1}sh(|y|)}  dy ds 
&\leq C \int_{(h(4b))^{-1}}^\infty \int_{|y| \leq 4b} \frac{K(|y|)}{|y|^{d}} e^{-C^{-1}sh(|y|)/2} e^{-C^{-1}sh(4b)/2}  dy ds  \nonumber
\\
&\leq C \int_{(h(4b))^{-1}}^{\infty} e^{-C^{-1}sh(4b)/2} \int_{|y| \leq 4b} \frac{K(|y|)}{|y|^{d}}e^{-C^{-1}sh(|y|)/2}dy ds  \nonumber
\\
&\leq C  \int_{(h(4b))^{-1}}^{\infty}  s^{-1} e^{-C^{-1}sh(4b)/2} ds \leq C h(4b) \int_{(h(4b))^{-1}}^{\infty}  e^{-C^{-1}sh(4b)/2} ds \nonumber
\\
&\leq C h(4b)/h(4b) = C.
\end{align}
Also, due to Lemma \ref{l:qest_offd} with $f(r)=h(r^{-1})$, we have
\begin{align}\label{eqn 09.15.14:26}
 \int_{(h(4b))^{-1}}^\infty \int_{0}^{4b}  s^{-1}(h^{-1}(s^{-1}))^{-d} \rho^{d-1}d\rho ds  
\leq C b^{d} \int_{(h(4b))^{-1}}^\infty   s^{-1}(h^{-1}(s^{-1}))^{-d}ds \leq C.
\end{align}
Therefore, we have \eqref{eqn 09.14.11:27}, and the lemma is proved.
\end{proof}

\medskip

The following lemma is a counterpart of Lemma \ref{l:qest_case1_pf}. The proof is more delicate than that of Lemma \ref{l:qest_case1_pf} because $h(r)$ and $\ell(r^{-1})$ may not be comparable for $0<r\le1$. 

\begin{lemma}\label{lem 09.21.12:37}
Suppose the function $\ell$ satisfies Assumption \ref{ell_con} (ii)--(2). Then,
there exists $C_{1}=C_{1}(d,\text{\boldmath{$\kappa$}}_{3},\text{\boldmath{$\delta$}},\ell)$ and $C_{2}=C_{2}(d,\text{\boldmath{$\kappa$}}_{2},\text{\boldmath{$\delta$}},\ell)$ such that for any $b>0$
\begin{gather}
\int_{(h(b))^{-1}}^\infty \int_{b\leq |y| \leq h^{-1}(s^{-1})} |D_{x}q(s,y)| dy ds \le C_{1}b^{-1}, \label{q1upper_case2_pf}\\
\int_{(h(4b))^{-1}}^\infty \int_{|y|\leq 4b} |q(s,y)| dy ds \le C_{2}. \label{eqn 09.14.11:55}
\end{gather}
\end{lemma}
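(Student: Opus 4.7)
The plan is to mirror the proof of Lemma \ref{l:qest_case1_pf}, replacing the use of Theorem \ref{cor 09.13.16:58} (which requires Assumption \ref{ell_con} (i)) with Theorem \ref{thm 08.24.16:27} (valid under (ii)--(2)). For both \eqref{q1upper_case2_pf} and \eqref{eqn 09.14.11:55}, I will split the time integral at $t_{1,1}$ from Theorem \ref{thm 09.13.17:01}. The \emph{long-time} parts ($s\ge t_{1,1}$) rely only on Theorem \ref{thm 09.13.17:01}, which is insensitive to the dichotomy (i) vs.\ (ii)--(2); hence, the calculations in \eqref{eqn 08.30.11:05}, \eqref{eqn 09.15.14:24}, \eqref{eqn 09.15.14:26}, together with Lemma \ref{l:qest_offd} applied to $f(r)=h(r^{-1})$, go through verbatim and produce the required bounds $Cb^{-1}$ and $C$.

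For the \emph{short-time} parts ($s\le t_{1,1}$), Theorem \ref{thm 08.24.16:27} gives, for $a\ge\alpha_{1,1}$ (respectively $a\ge\alpha_{1,0}$),
\begin{equation*}
|D_{x}q(s,y)| \le C\,\frac{K(\theta_{a}(|y|,s))}{[\theta_{a}(|y|,s)]^{d+1}}\exp\!\bigl(-C^{-1}sh(\theta_{a}(|y|,s))\bigr),
\end{equation*}
and analogously for $|q(s,y)|$ with exponent $d$. I will split further according to whether $\theta_{a}(|y|,s)=|y|$ (Case A) or $\theta_{a}(|y|,s)=(\ell^{-1}(a/s))^{-1}$ (Case B). In Case A, the bound reduces to $K(|y|)|y|^{-d-m}\exp(-C^{-1}sh(|y|))$, exactly the integrand appearing in the proof of Lemma \ref{l:qest_case1_pf}, so Fubini as in \eqref{eqn 05.30.18:42} handles \eqref{q1upper_case2_pf}, and the decomposition of the exponential together with \eqref{Kellcomp} (as in \eqref{eqn 09.15.14:24}) handles \eqref{eqn 09.14.11:55}.

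Case B is the genuinely new ingredient, and this is where Assumption \ref{ell_con} (ii)--(2) enters. Write $r(s):=(\ell^{-1}(a/s))^{-1}$; in Case B the integrand is independent of $y$, and the effective $y$-region lies in $B_{r(s)}$ (further intersected with $\{|y|\ge b\}$ for \eqref{q1upper_case2_pf}, or $\{|y|\le 4b\}$ for \eqref{eqn 09.14.11:55}). Integrating out $y$ and using $K(r(s))\asymp \ell(r(s)^{-1})=a/s$, the $s$-integrand in Case B is proportional to $s^{-1}r(s)\exp(-C^{-1}sh(r(s)))$ for \eqref{q1upper_case2_pf} and $s^{-1}\min(b,r(s))^{d}\exp(-C^{-1}sh(r(s)))$ for \eqref{eqn 09.14.11:55}. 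Since $sh(r(s))=a\,h(r(s))/\ell(r(s)^{-1})$, Assumption \ref{ell_con} (ii)--(2) (via Remark \ref{rmk 09.15.18:04} (i)) yields
\begin{equation*}
\exp\!\bigl(-C^{-1}sh(r(s))\bigr) \le \frac{C}{h(r(s))}\wedge 1,
\end{equation*}
which provides the decay missing from the naïve bound. Changing variables $v=\ell^{-1}(a/s)$ (so that $s=a/\ell(v)$, using $\ell\asymp\ell^{*}$ to guarantee monotonicity and a well-defined inverse) converts the $s$-integral into an integral in $v$, against which \eqref{con_ii} of Remark \ref{rmk 09.15.18:04} (ii) — the equivalent form of (ii)--(2) — can be applied. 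For \eqref{q1upper_case2_pf}, the constraint $|y|\ge b$ restricts Case B to $s\ge a/\ell(b^{-1})$, and this boundary term produces the factor $b^{-1}$ after bounding via Lemma \ref{l:qest_offd}. For \eqref{eqn 09.14.11:55}, a geometric decomposition of the $s$-range into $\{r(s)\le b\}$ and $\{r(s)>b\}$, combined with the same bound on the exponential, gives a uniform constant. The main obstacle will be executing this Case B change of variables cleanly — particularly matching the form of \eqref{con_ii} to the integrand that arises — while correctly handling the subregion where $r(s)\ge 1$ (in which $h(r(s))\asymp\ell(r(s)^{-1})$ by \eqref{eqn 09.21.13:27} and the exponential gives genuine decay directly, bypassing the need for \eqref{eqn 08.12.17:59}).
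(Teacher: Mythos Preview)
Your decomposition (long-time via Theorem \ref{thm 09.13.17:01}; short-time via Theorem \ref{thm 08.24.16:27} split into Case A $\theta_{a}=|y|$ and Case B $\theta_{a}=r(s):=(\ell^{-1}(a/s))^{-1}$) is exactly the paper's, and Case A together with the long-time parts are handled just as you describe. The only difference is in Case B, where the paper is more direct than your plan: no change of variables and no appeal to \eqref{con_ii} are needed. One simply uses $s=a/\ell^{\ast}(r(s)^{-1})$ to rewrite the exponent as $C^{-1}a\,h(r(s))/\ell(r(s)^{-1})$ (via $\ell\asymp\ell^{\ast}$) and then applies \eqref{eqn 08.12.17:59} \emph{pointwise in $s$} to get $h(r(s))\exp(-C^{-1}a\,h(r(s))/\ell(r(s)^{-1}))\le C$; the remaining $s$-integral is over $[(h(b))^{-1},t_{1,1}]$ (resp.\ $t_{1,0}$), a bounded interval.

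Two further simplifications in the paper's Case B worth noting. For \eqref{q1upper_case2_pf} the paper does not integrate $y$ out first; instead it uses $\theta_{a}(|y|,s)\ge|y|$ to replace $\theta_{a}^{-(d+1)}$ by $|y|^{-(d+1)}$, then after the pointwise bound above one is left with $\int_{b}^{\infty}\rho^{-2}\,d\rho=b^{-1}$ times $\int^{t_{1,1}}ds\le C$. (Your stated integrand $s^{-1}r(s)$ should read $s^{-1}r(s)^{-1}$.) For \eqref{eqn 09.14.11:55} the paper's further split $II_{1}/II_{2}$ is precisely your ``$r(s)\le 4b$ versus $r(s)>4b$'' decomposition: on $II_{1}$ one uses $h(r(s))\ge h(4b)$ to extract $e^{-C^{-1}sh(4b)/2}$ and then Lemma \ref{rmk 05.07.16:12} (ii) for the $y$-integral, while on $II_{2}$ one uses $r(s)\ge 4b$ to bound $r(s)^{-d}\le Cb^{-d}$ and then the pointwise exponential estimate above.
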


\begin{proof}
We first show \eqref{eqn 09.14.11:55}. By Theorem \ref{thm 09.13.17:01} with $t_{1,0}$,
\begin{align*}
\begin{split}
\int_{(h(4b))^{-1}}^{\infty} \int_{|y|\leq 4b}|q(s,y)| dy ds
&\leq \int_{(h(4b))^{-1}}^{\infty} \int_{|y|\leq 4b} \mathbf{1}_{s\leq t_{1,0}}\mathbf{1}_{s\leq a (\ell^*(|y|^{-1}))^{-1}}|q(s,y)| dy ds
\\
&\quad+\int_{(h(4b))^{-1}}^{\infty} \int_{|y|\leq 4b} \mathbf{1}_{s\leq t_{1,0}}\mathbf{1}_{s\geq a (\ell^*(|y|^{-1}))^{-1}} |q(s,y)| dy ds
\\
&\quad + C \int_{(h(4b))^{-1}}^\infty \int_{0}^{4b} s^{-1}(h^{-1}(s^{-1}))^{-d}\mathbf{1}_{sh(\rho) \geq 1} \rho^{d-1}d\rho ds 
\\
&=:I+II+III,
\end{split}
\end{align*}
where $a\geq \alpha_{1,0}$ comes from Theorem \ref{thm 08.24.16:27}. Since $s\leq a(\ell^{\ast}(|y|^{-1}))^{-1}$ is equivalent to $\theta_{a}(|y|,s) = |y|$, using Theorem \ref{thm 08.24.16:27} and \eqref{eqn 09.15.14:24}  we have $I\leq C$. 

Now we consider $II$. Observe that
\begin{align*}
II
&\leq \int_{(h(4b))^{-1}}^{\infty} \int_{|y|\leq 4b}\mathbf{1}_{s\leq t_{1,0}}\mathbf{1}_{a (\ell^*(|y|^{-1}))^{-1} \leq s \leq a(\ell^{\ast}((4b)^{-1}))^{-1}} |q(s,y)| dy ds
\\
& \quad + \int_{(h(4b))^{-1}}^{\infty} \int_{|y|\leq 4b} \mathbf{1}_{s\leq t_{1,0}} \mathbf{1}_{a(\ell^*((4b)^{-1}))^{-1} \leq s} |q(s,y)| dy ds
\\
&=: II_{1}+II_{2}.
\end{align*}
Since $r\mapsto h(r)$ is decreasing, we see that $h((\ell^{-1}(a/s))^{-1})\geq h(4b)$ for $s\le a(\ell^*((4b)^{-1}))^{-1}$. Using this and Theorem \ref{thm 08.24.16:27}, we have
\begin{align*}
II_{1}
&\leq \int_{(h(4b))^{-1}}^\infty \int_{|y| \leq 4b} \mathbf{1}_{a (\ell^*(|y|^{-1}))^{-1} \leq s \leq a(\ell^{\ast}((4b)^{-1}))^{-1}} \frac{K(\theta_{a}(|y|,s))}{\theta_{a}(|y|,s)^{d}} e^{-C^{-1}sh(\theta_{a}(|y|,s))}  dy ds
\\
&\leq \int_{(h(4b))^{-1}}^\infty \int_{|y| \leq 4b} e^{-C^{-1}sh(4b)/2} \frac{K(\theta_{a}(|y|,s))}{\theta_{a}(|y|,s)^{d}} e^{-C^{-1}sh(\theta_{a}(|y|,s))/2}  dy ds.
\end{align*}
Hence, by following the argument in \eqref{eqn 09.15.14:24} with Lemma \ref{rmk 05.07.16:12} (ii), we have $II_{1}\leq C$.

Recall that $\theta_{a}(|y|,s) = (\ell^{-1}(a/s))^{-1}$ on the region of the integral $II_{2}$. Therefore, we have
\begin{align*}
II_{2}
&\leq C \int_{(h(4b))^{-1}}^{\infty} \int_{|y|\leq 4b} \mathbf{1}_{s\leq t_{1,0}}\mathbf{1}_{a(\ell^{\ast}((4b)^{-1}))^{-1} \leq s} \frac{K(\theta_{a}(|y|,s))}{\theta_{a}(|y|,s)^{d}} e^{-C^{-1}sh(\theta_{a}(|y|,s))} dy ds 
\\
&= C \int_{(h(4b))^{-1}}^{t_{1,0}} \int_{|y|\leq 4b} \mathbf{1}_{a(\ell^{\ast}((4b)^{-1}))^{-1} \leq s} \frac{K((\ell^{-1}(a/s))^{-1})}{(\ell^{-1}(a/s))^{-d}} e^{-C^{-1} s h((\ell^{-1}(a/s))^{-1})} dy ds
\\
&\leq C \int_{(h(4b))^{-1}}^{t_{1,0}} \int_{|y|\leq 4b} b^{-d} h((\ell^{-1}(a/s))^{-1}) e^{-C^{-1}a \frac{h((\ell^{-1}(a/s))^{-1})}{\ell^{\ast}(\ell^{-1}(a/s))}} dy ds,
\end{align*}
where for the last inequality we used $s=a(\ell^{\ast}(\ell^{-1}(a/s)))^{-1}$. Hence, using \eqref{eqn 08.12.17:59} (recall Remark \ref{rmk 09.15.18:04} (i)) and the fact that $\ell^{\ast} \asymp \ell$, we have
\begin{align*}
II_{2}
\leq C \int_{(h(4b))^{-1}}^{t_{1,0}}  h((\ell^{-1}(a/s))^{-1}) e^{-C^{-1}a \frac{h((\ell^{-1}(a/s))^{-1})}{\ell(\ell^{-1}(a/s))}} ds
\leq C \int_{(h(4b))^{-1}}^{t_{1,0}}  1 ds \leq C,
\end{align*}
where for the last inequality, we abuse the notation $\int_{(h(4b))^{-1}}^{t_{1,0}} 1 ds =0$ for $t_{1,0}<(h(4b))^{-1}$. Thus, we obtain $II\le C$. Since we already handled $III$ in \eqref{eqn 09.15.14:26}, we have \eqref{eqn 09.14.11:55}.

Now, we show \eqref{q1upper_case2_pf}. 
Similar to \eqref{eqn 09.14.11:55}, we split the integral using Theorem \ref{thm 09.13.17:01} with $t_{1,1}$. Then we have a similar decomposition
\begin{align*}
\int_{(h(b))^{-1}}^\infty \int_{b\leq |y| \leq h^{-1}(s^{-1})} |D_{x}q(s,y)| dy ds
&\leq \int_{(h(b))^{-1}}^\infty \int_{b\leq |y| \leq h^{-1}(s^{-1})} \mathbf{1}_{s\leq t_{1,1}}\mathbf{1}_{s\leq a (\ell^*(|y|^{-1}))^{-1}}|D_{x}q(s,y)| dy ds
\\
&\quad + \int_{(h(b))^{-1}}^\infty \int_{b\leq |y| \leq h^{-1}(s^{-1})} \mathbf{1}_{s\leq t_{1,1}}\mathbf{1}_{s\geq a (\ell^*(|y|^{-1}))^{-1}}|D_{x}q(s,y)| dy ds
\\
&\quad + C \int_{(h(b))^{-1}}^\infty \int_{b}^{h^{-1}(s^{-1})} s^{-1} (h^{-1}(s^{-1}))^{-d-1}\mathbf{1}_{sh(\rho)\geq1} \rho^{d-1} d\rho ds
\\
& := IV+V+VI,
\end{align*}
where $a\geq \alpha_{1,1}$ comes from Theorem \ref{thm 08.24.16:27}. Note that the first equality holds since $sh(\rho)\leq1$ is equivalent to $\rho\geq h^{-1}(s^{-1})$. Recall that due to \eqref{eqn 09.15.16:52}, we have $VI\leq Cb^{-1}$. Therefore, we only focus on $IV$ and $V$.   

Since $s \leq a(\ell^{\ast}(|y|^{-1}))^{-1}$ is equivalent to $\theta_{a}(|y|,s)=|y|$, using Theorem \ref{thm 08.24.16:27} and following the argument in \eqref{eqn 05.30.18:42}, we have
\begin{align*}
IV \leq C \int_{(h(b))^{-1}}^\infty \int_{b}^{h^{-1}(s^{-1})} K(\rho) \rho^{-2} e^{-C^{-1}sh(\rho)} d\rho  ds \leq Cb^{-1}.
\end{align*}

Now we consider $V$. Note that $\theta_{a}(|y|,s)=(\ell^{-1}(a/s))^{-1} \geq |y|$ on the region of integral $V$. Using this and Theorem \ref{thm 08.24.16:27}, we have
\begin{align*}
V
&\leq C \int_{(h(b))^{-1}}^{t_{1,1}} \int_{b\leq |y| \leq h^{-1}(s^{-1})} \frac{K((\ell^{-1}(a/s))^{-1})}{|y|^{d+1}} e^{C^{-1}sh((\ell^{-1}(a/s))^{-1})}  dy ds
\\
&\leq C \int_{(h(b))^{-1}}^{t_{1,1}} \int_{b\leq |y| \leq h^{-1}(s^{-1})} \frac{h((\ell^{-1}(a/s))^{-1})}{|y|^{d+1}} e^{-C^{-1}a \frac{h((\ell^{-1}(a/s))^{-1})}{\ell^{\ast}(\ell^{-1}(a/s))}}  dy ds
\\
&\leq C \int_{(h(b))^{-1}}^{t_{1,1}} \int_{b\leq |y| \leq h^{-1}(s^{-1})} \frac{h((\ell^{-1}(a/s))^{-1})}{|y|^{d+1}} e^{-C^{-1}a \frac{h((\ell^{-1}(a/s))^{-1})}{\ell(\ell^{-1}(a/s))}}  dy ds.
\end{align*}
where we used $s=a(\ell^{\ast}(\ell^{-1}(a/s)))^{-1}$, and the fact that $\ell^{\ast} \asymp \ell$. Using \eqref{eqn 08.12.17:59} (recall Remark \ref{rmk 09.15.18:04} (i)) and Fubini's theorem we have
\begin{align*}
V &\leq C \int_{(h(b))^{-1}}^{t_{1,1}} \int_{b\leq |y| \leq h^{-1}(s^{-1})} \frac{h((\ell^{-1}(a/s))^{-1})}{|y|^{d+1}} e^{-C^{-1}a \frac{h((\ell^{-1}(a/s))^{-1})}{\ell(\ell^{-1}(a/s))}}  dy ds.
\\
&\leq C \int_{(h(b))^{-1}}^{t_{1,1}} \int_{b}^{h^{-1}(s^{-1})} \rho^{-2} d\rho ds
\leq C \int_{(h(b))^{-1}}^{t_{1,1}} \int_{b}^{\infty} \rho^{-2} d\rho ds
\leq Cb^{-1}.
\end{align*}
Here, we abuse the notation $\int_{(h(b))^{-1}}^{t_{1,1}} 1 ds =0$ for $t_{1,1}<(h(b))^{-1}$. Thus, we obtain \eqref{q1upper_case2_pf}. The lemma is proved.
\end{proof}

\end{document}